\providecommand{\tabularnewline}{\\}
\providecommand{\algorithmname}{Algorithm}
\theoremstyle{plain}
\newtheorem{thm}{\protect\theoremname}
\theoremstyle{definition}
\newtheorem{defn}[thm]{\protect\definitionname}
\theoremstyle{plain}
\newtheorem{lem}[thm]{\protect\lemmaname}
\theoremstyle{plain}
\newtheorem{assumption}[thm]{\protect\assumptionname}
\theoremstyle{plain}
\newtheorem{prop}[thm]{\protect\propositionname}
\theoremstyle{remark}
\newtheorem{rem}[thm]{\protect\remarkname}
\theoremstyle{plain}
\newtheorem{cor}[thm]{\protect\corollaryname}
\providecommand{\assumptionname}{Assumption}
\providecommand{\corollaryname}{Corollary}
\providecommand{\definitionname}{Definition}
\providecommand{\lemmaname}{Lemma}
\providecommand{\propositionname}{Proposition}
\providecommand{\remarkname}{Remark}
\providecommand{\theoremname}{Theorem}
\begin{document}
\global\long\def\conj{*}%

\global\long\def\Z{\mathbb{Z}}%

\global\long\def\R{\mathbb{R}}%

\global\long\def\C{\mathbb{C}}%

\global\long\def\H{{\cal H}}%

\global\long\def\X{{\cal X}}%

\global\long\def\Y{{\cal Y}}%

\global\long\def\e{{\mathbf{e}}}%

\global\long\def\et#1{{\e(#1)}}%

\global\long\def\ef{{\mathbf{\et{\cdot}}}}%

\global\long\def\x{{\mathbf{x}}}%

\global\long\def\w{{\mathbf{w}}}%

\global\long\def\xt#1{{\x(#1)}}%

\global\long\def\xf{{\mathbf{\xt{\cdot}}}}%

\global\long\def\d{{\mathbf{d}}}%

\global\long\def\b{{\mathbf{b}}}%

\global\long\def\u{{\mathbf{u}}}%

\global\long\def\y{{\mathbf{y}}}%

\global\long\def\yt#1{{\y(#1)}}%

\global\long\def\yf{{\mathbf{\yt{\cdot}}}}%

\global\long\def\z{{\mathbf{z}}}%

\global\long\def\v{{\mathbf{v}}}%

\global\long\def\h{{\mathbf{h}}}%

\global\long\def\s{{\mathbf{s}}}%

\global\long\def\c{{\mathbf{c}}}%

\global\long\def\p{{\mathbf{p}}}%

\global\long\def\f{{\mathbf{f}}}%

\global\long\def\g{{\mathbf{g}}}%

\global\long\def\a{{\mathbf{a}}}%

\global\long\def\rb{{\mathbf{r}}}%

\global\long\def\rt#1{{\rb(#1)}}%

\global\long\def\rf{{\mathbf{\rt{\cdot}}}}%

\global\long\def\mat#1{{\ensuremath{\bm{\mathrm{#1}}}}}%

\global\long\def\valpha{\mat{\alpha}}%

\global\long\def\vbeta{\mat{\beta}}%

\global\long\def\vtheta{\mat{\theta}}%

\global\long\def\veta{\mat{\eta}}%

\global\long\def\vmu{\mat{\mu}}%

\global\long\def\vrho{\mat{\rho}}%

\global\long\def\matN{\ensuremath{{\bm{\mathrm{N}}}}}%

\global\long\def\matA{\ensuremath{{\bm{\mathrm{A}}}}}%

\global\long\def\matB{\ensuremath{{\bm{\mathrm{B}}}}}%

\global\long\def\matC{\ensuremath{{\bm{\mathrm{C}}}}}%

\global\long\def\matD{\ensuremath{{\bm{\mathrm{D}}}}}%

\global\long\def\matP{\ensuremath{{\bm{\mathrm{P}}}}}%

\global\long\def\matU{\ensuremath{{\bm{\mathrm{U}}}}}%

\global\long\def\matV{\ensuremath{{\bm{\mathrm{V}}}}}%

\global\long\def\matM{\ensuremath{{\bm{\mathrm{M}}}}}%

\global\long\def\matR{\mat R}%

\global\long\def\matW{\mat W}%

\global\long\def\matK{\mat K}%

\global\long\def\matQ{\mat Q}%

\global\long\def\matS{\mat S}%

\global\long\def\matY{\mat Y}%

\global\long\def\matX{\mat X}%

\global\long\def\matI{\mat I}%

\global\long\def\matJ{\mat J}%

\global\long\def\matZ{\mat Z}%

\global\long\def\matL{\mat L}%

\global\long\def\matH{\mat H}%

\global\long\def\S#1{{\mathbb{S}_{N}[#1]}}%

\global\long\def\IS#1{{\mathbb{S}_{N}^{-1}[#1]}}%

\global\long\def\PN{\mathbb{P}_{N}}%

\global\long\def\TNormS#1{\|#1\|_{2}^{2}}%

\global\long\def\TNorm#1{\|#1\|_{2}}%

\global\long\def\InfNorm#1{\|#1\|_{\infty}}%

\global\long\def\FNorm#1{\|#1\|_{F}}%

\global\long\def\UNorm#1{\|#1\|_{\matU}}%

\global\long\def\UNormS#1{\|#1\|_{\matU}^{2}}%

\global\long\def\UINormS#1{\|#1\|_{\matU^{-1}}^{2}}%

\global\long\def\ANorm#1{\|#1\|_{\matA}}%

\global\long\def\BNorm#1{\|#1\|_{\mat B}}%

\global\long\def\HNorm#1{\|#1\|_{{\cal H}}}%

\global\long\def\HNormS#1{\|#1\|_{\H}^{2}}%

\global\long\def\XNormS#1#2{\|#1\|_{#2}^{2}}%

\global\long\def\AINormS#1{\|#1\|_{\matA^{-1}}^{2}}%

\global\long\def\BINormS#1{\|#1\|_{\matB^{-1}}^{2}}%

\global\long\def\BINorm#1{\|#1\|_{\matB^{-1}}}%

\global\long\def\ONorm#1#2{\|#1\|_{#2}}%

\global\long\def\T{\textsc{T}}%

\global\long\def\pinv{\textsc{+}}%

\global\long\def\Expect#1{{\mathbb{E}}\left[#1\right]}%

\global\long\def\ExpectC#1#2{{\mathbb{E}}_{#1}\left[#2\right]}%

\global\long\def\dotprod#1#2#3{(#1,#2)_{#3}}%

\global\long\def\dotprodsqr#1#2{(#1,#2)^{2}}%

\global\long\def\Trace#1{{\bf Tr}\left(#1\right)}%

\global\long\def\realpart#1{{\bf Re}\left(#1\right)}%

\global\long\def\nnz#1{{\bf nnz}\left(#1\right)}%

\global\long\def\range#1{{\bf range}\left(#1\right)}%

\global\long\def\nully#1{{\bf null}\left(#1\right)}%

\global\long\def\vol#1{{\bf vol}\left(#1\right)}%

\global\long\def\rank#1{{\bf rank}\left(#1\right)}%

\global\long\def\diag#1{{\bf diag}\left(#1\right)}%

\global\long\def\erfc#1{{\bf erfc}\left(#1\right)}%

\global\long\def\grad#1{{\bf grad}#1}%

\global\long\def\id#1{{\bf id_{#1}}}%

\global\long\def\st{\,\,\,\text{s.t.}\,\,\,}%

\title{Semi-Infinite Linear Regression and Its Applications}
\author{Paz Fink Shustin and Haim Avron}
\maketitle
\begin{abstract}
Finite linear least squares is one of the core problems of numerical
linear algebra, with countless applications across science and engineering.
Consequently, there is a rich and ongoing literature on algorithms
for solving linear least squares problems. In this paper, we explore
a variant in which the system's matrix has one infinite dimension
(i.e., it is a quasimatrix). We call such problems semi-infinite linear
regression problems. As we show, the semi-infinite case arises in
several applications, such as supervised learning and function approximation,
and allows for novel interpretations of existing algorithms. We explore
semi-infinite linear regression rigorously and algorithmically. To
that end, we give a formal framework for working with quasimatrices,
and generalize several algorithms designed for the finite problem
to the infinite case. Finally, we suggest the use of various sampling
methods for obtaining an approximate solution.
\end{abstract}

\section{Introduction}

Consider the classical linear least squares problem: given an $m\times n$
matrix $\matA$, and a vector $\b$, we seek to compute: 
\begin{equation}
\x^{\star}=\arg\min_{\x\in\R^{n}}\TNorm{\matA\x-\b}\,.\label{eq:finite-ls}
\end{equation}
The problem of solving Eq.~(\ref{eq:finite-ls}) is one of the most
fundamental problems of numerical linear algebra, and it has countless
applications throughout scientific computing and data science. As
such, there is a rich literature on algorithms for solving Eq.~(\ref{eq:finite-ls}).
In particular, there are algorithms that: compute an approximate solution
\cite{DMMS11}, compute a near exact solution \cite{RT08,AMT10,MSM14},
are designed for the over-determined case \cite{AMT10}, designed
for the under-determined case \cite{MSM14}, consider also the presence
of a regularizer \cite{ACW17,PW16}, and replace the two-norm with
some other norm \cite{ClarksonEtAl16}. The previous list is far from
exhaustive. Finding efficient algorithms for solving Eq.~(\ref{eq:finite-ls})
is an active research field.

In this paper, we explore a variant of Eq.~(\ref{eq:finite-ls})
in which $\matA$ is no longer a matrix, but a \emph{quasimatrix},
that is a matrix in which one of the two dimensions is infinite (while
the other dimension is finite). We call such problems \emph{'semi-infinite
linear regression'. }As we show, the semi-infinite case arises in
several applications, such as supervised learning and function approximation,
and allows for novel interpretations of existing algorithms. In contrast
to the rich literature on the finite (i.e., matrix) variant of Eq.~(\ref{eq:finite-ls}),
the semi-infinite case has been hardly treated in the literature (the
only exception we aware of is \cite{Trefethen10}).

The goal of this paper is to explore semi-infinite linear regression
rigorously and algorithmically. To that end, we first define the notion
of quasimatrix formally, and give the needed framework for working
with quasimatrices, both mathematically and algorithmically. The use
of the term 'quasimatrix' as a matrix which has columns or rows that
are functions first appears in the literature in \cite{Stewart98,Trefethen10},
but has so far been informal. Once we have the mathematical foundations,
we define semi-infinite regression formally, and discuss applications.

We then proceed to proposing algorithms for solving semi-infinite
linear regression problems. First, we discuss direct methods, which
factorize a quasimatrix $\mat A$ into a product of quasimatrices.
Even though most of the algorithms we present are straightforward
generalizations of classical methods for finite linear least squares
problems, we also show how in some cases the use of quasimatrix operations
can be sidestepped.

Next, we discuss iterative methods. It is possible to devise a wide
array of iterative methods for solving semi-infinite linear regression
by generalizing iterative methods for the finite case. However, for
conciseness we show a representative algorithm from each of the two
approaches: Krylov subspace methods and stochastic optimization. For
Krylov methods, we show how LSMR \cite{fong2011lsmr} can be generalized
to solve semi-infinite linear regression. For stochastic optimization,
we adapt a method based on stochastic variance reduce gradient descent
(SVRG) \cite{johnson2013accelerating,xiao2014proximal}. Unlike LSMR,
the adaptation of SVRG is less straightforward, and requires generalizing
it to work with integrable sums.

Motivated by recent literature on Randomized Numerical Linear Algebra
(RandNLA), we discuss algorithms that are based on \emph{sampling}
the semi-infinite linear regression problem. We discuss both randomized
sampling and deterministic sampling. For randomized sampling, we discuss
a sampling technique which we term as \emph{natural sampling}. Conceptually,
natural sampling is analogous to uniformly sampling rows or columns
from $\matA$ when dealing with finite linear least-squares problems,
though for semi-infinite linear regression problems, uniform sampling
is not always well defined. It is well known from the RandNLA literature
that it is better to sample based on the so-called \emph{leverage
scores}. For quasimatrices, the analogous operation is sampling using
the \emph{leverage function}, a generalization of leverage scores.
Similar results have been shown before for restricted cases \cite{avron2017random,cohen2017optimal,avron2019universal}.
We also propose a deterministic sampling method based on Gauss-Legendre
quadrature. Interestingly, this method does not have any finite analogue.

Most of the methods we present are based on existing algorithms for
the finite linear least squares case to some degree (the only exception
is the quadrature sampling). The main contribution of the paper is
in the rigorous and systematic treatment of the subject. We hope that
our systematic treatment of semi-infinite linear regression will spur
additional interest and research on this problem.

\section{\label{sec:quasimatrices}Quasimatrices}

The term 'quasimatrix' appears in the literature as a name for matrices
in which one of their dimensions is infinite \cite{Stewart98}. The
term was later adapted by the chebfun library~\textbf{\cite{battles2004extension}}\footnote{See \href{http://www.chebfun.org/docs/guide/guide06.html}{http://www.chebfun.org/docs/guide/guide06.html}.},
and a variety of papers related to that package, and other literature,
use it, e.g. \cite{Trefethen10,olver2008preconditioned,olver2010shifted,kandasamy2016learning,hashemi2018spectral}..
In previous literature, quasimatrices were defined and treated informally
as matrices which have columns or rows that are functions. For our
purposes, a more formal treatment is needed, and we provide it here.
Our approach is in many ways similar to the one taken by \cite{de1991alternative}
to defining fundamental notions such as rank and basis as ones derived
from linear maps in which the domain is finite dimensional vectors.

\paragraph{Notations and Basic Terminology.}

For an integer $n$, we denote $[n]=\{1,\dots,n\}$. Scalars are denoted
by lower-case Greek letters or by $x,y,\dots$. Given two Banach spaces,
${\cal X}$ and ${\cal Y}$, we denote by ${\cal B}(\X,\Y)$ the Banach
space of bounded linear operators from $\X$ to $\Y$. Given a Banach
space $\X$, $\X^{\conj}$ denotes the \emph{topological }dual space
of $\X$, i.e. the space of continuous linear functionals from $\X$
to $\R$ or $\C$. So, $\X^{\conj}={\cal B}(\X,\R)$ or $\X^{\conj}={\cal B}(\X,\C)$
(depending on the context). Vectors are denoted by $\x,\y,\dots$
and considered as column vectors (unless otherwise stated), and matrices
are denoted by $\matA,\mat B,\dots$ or upper-case Greek letters.
Quasimatrices are denoted by $\x,\y,\dots$ if they are lying in a
Hilbert space and otherwise by $\matA,\mat B,\dots$(defined later
in this paper). For a vector $\x$ or a matrix $\mat A$, the notation
$\x^{*}$ or $\mat A^{*}$ denotes the Hermitian transpose. The $n\times n$
identity matrix is denoted by $\mat I_{n}$. We use $\e_{1},\e_{2},\dots$
to denote the unit vectors and assume that their dimensions are clear
from the context. All vectors are considered as columns vectors, which
can be of finite or infinite dimension.

We use $L_{2}(\Omega,d\mu)$ to denote the space of complex-valued
square integrable functions over $\Omega$ with respect to the measure
$\mu$, i.e. the inner product in $L_{2}(\Omega,d\mu)$ is 
\[
\dotprod{\f}{\g}{L_{2}(\Omega,d\mu)}\coloneqq\int_{\Omega}\overline{\f(\veta)}\g(\veta)d\mu(\veta)\,.
\]

\subsection{Quasimatrix Algebra}

A matrix is a mapping from two indexes to a scalar. Alternatively,
a matrix can be viewed as mapping from a finite index set to finite
dimensional vectors, where the index set is either the column index
or the row index. For a quasimatrix we drop the condition that the
mapped vectors are finite dimensional, and instead require them to
be from a Hilbert space.
\begin{defn}
Let $n$ be a positive integer and let ${\cal H}$ be an Hilbert space
over $\R$ or $\C$. A \emph{tall quasimatrix} is a mapping from $[n]$
to $\H$. A \emph{wide quasimatrix }is a mapping from $[n]$ to $\H^{\conj}$.
\end{defn}

We generally omit the adjectives tall and wide when the text refers
to both types, or when the relevant type is clear from the context.
We say the size of a tall quasimatrix is $m\times n$ if $m$ is the
dimension of the Hilbert space $\H$. We generally write $\infty\times n$
if $\H$ has infinite dimension. A similar notion of size applies
for wide quasimatrices. To avoid clutter, henceforth we assume that
$\H$ is defined over $\C$, and leave for the reader to deduce how
some of the description is somewhat simplified for the real case.

For a tall quasimatrix $\matA$, we refer to the values at the various
indexes as the \emph{columns} of the quasimatrix. We use the following
notation
\[
\matA=\left[\begin{array}{ccc}
\a_{1} & \cdots & \a_{n}\end{array}\right]
\]
to denote the tall quasimatrix $\matA$ which maps $j\in[n]$ to $\a_{j}$,
where $\a_{1},\dots,\a_{n}\in\H$.

Let $\b_{1},\dots,\b_{m}\in\H$, and $\b_{1}^{\conj},\dots,\b_{m}^{\conj}\in\H^{*}$
their adjoints. We denote

\[
\matB=\left[\begin{array}{c}
\b_{1}^{\conj}\\
\vdots\\
\b_{m}^{\conj}
\end{array}\right]
\]
for the wide quasimatrix $\matB$ which maps $j\in[m]$ to $\b_{j}^{\conj}$.
If a wide quasimatrix $\matB$ maps $j$ to $\x\in\H^{\conj}$, the
Riesz Representation Theorem implies that there exists a $\b\in\H$
such that $\x=\b^{\conj}$, so every wide quasimatrix can be written
in this way. For a wide quasimatrix $\matB$, we refer to the adjoints
of the values at the indexes as the \emph{rows} of the quasimatrix.
Note that both the columns of a tall quasimatrix, and the rows of
a wide quasimatrix, are vectors in $\H$.

In the rest of Section~\ref{sec:quasimatrices}, $\matA$ is a tall
quasimatrix with columns $\a_{1},\dots,\a_{n}$, and $\matB$ is a
wide quasimatrix with rows $\b_{1}^{\conj},\dots,\b_{m}^{\conj}$.

The conjugate transpose of a tall quasimatrix $\matA$ is the wide
quasimatrix $\matB$ whose coordinates are the adjoints of the corresponding
coordinates of $\matA$. The conjugate transpose of a wide quasimatrix
$\matB$ is the tall quasimatrix $\matA$ whose coordinates are the
adjoints of the corresponding coordinates of $\matB$, which corresponds
to removing the adjoints. These definitions are consistent with the
notations above.

Given a tall quasimatrix $\matA$ and a vector $\x\in\C^{n}$, we
define the product of $\matA$ and $\x$ as $\matA\x=\sum_{j=1}^{n}x_{j}\a_{j}$.
This definition naturally extends to the definition of a product $\matA\matX$,
where $\matX\in\C^{n\times k}$, as the tall quasimatrix whose columns
are $\matA\x_{1},\dots\matA\x_{k}$ (where $\x_{1},\dots,\x_{n}$
are the columns of $\matX$). Given a wide quasimatrix $\matB$ and
a $\x\in\H$ we define 
\[
\matB\x=\left[\begin{array}{c}
\b_{1}^{\conj}\x\\
\vdots\\
\b_{m}^{\conj}\x
\end{array}\right]\,.
\]
This definition naturally extends to the definition of a product of
$\matB$ and $\matA$ as the $m\times n$ matrix whose columns are
$\matB\a_{1},\dots\matB\a_{n}$.

We now define the left product. Given a tall quasimatrix $\matA$
and a vector $\x\in\H$, we define $\x^{\conj}\matA=(\matA^{\conj}\x)^{\conj}$.
This definition is consistent with viewing $\x^{\conj}$ as a $1\times\infty$
quasimatrix, and the previous definition of $\matB\matA$. Similarly,
$\x^{\conj}\matB=(\matB^{\conj}\x)^{\conj}$ for $\x\in\R^{m}$. These
definitions naturally extend to the left product of a matrix and a
quasimatrix. The product algebra we have defined over matrices and
quasimatrices is associative (but, of course, not commutative).

It is well known that a $m\times n$ complex matrix is, in fact, a
coordinate representation of a linear transformation from $\C^{n}$
to $\C^{m}$ under the standard basis, and that choosing a different
basis leads to a different matrix (coordinate) representation. Similar
to finite matrices, quasimatrices define bounded linear transformations
between finite-dimensional Euclidean spaces and $\H$. Concretely,
for a tall quasimatrix $\matA$, we can define the transformation
${\cal A}:\x\in\C^{n}\mapsto{\cal \matA}\x\in\H$. Conversely, given
a bounded linear transformation ${\cal A}:\C^{n}\to{\cal H}$, for
the tall quasimatrix $\matA=\left[{\cal A}\e_{1}\cdots{\cal A\e}_{n}\right]$,
the corresponding linear transformation is ${\cal A}$. Thus, we can
abuse notation and use $\matA$ to denote both the quasimatrix and
the linear transformation it defines. Likewise, every wide quasimatrix
$\matB$ defines a bounded linear transformation $\matB:\x\in\H\mapsto\matB\x\in\R^{m}$,
likewise abusing notation (this is well justified by the Riesz Representation
Theorem). Taking the conjugate transpose of a tall or wide quasimatrices
produces a quasimatrix which represents the adjoint of the transformation
defined by the original quasimatrix, thus our notation is consistent
with that operation as well.

Similarly to the finite dimensional matrix case, the extended product
algebra over matrices and quasimatrices is consistent with composition
in the transformation spaces. That is, given two matrices or quasimatrices
$\matX$ and $\matY$, with sizes or underlying Hilbert space such
that the product $\matX\matY$ is defined, the transformation defined
by $\matX\matY$ (which is a matrix or a quasimatrix) is exactly the
same as the transformation obtained by $\matX$ composed with $\matY$.
However, we remark that if $\matX$ is a tall quasimatrix and $\matY$
is a wide quasimatrix, then we can certainly define the transformation
from $\H$ to $\H$ by composing $\matX$ with $\matY$, but that
transformation is not represented by a quasimatrix.

We have defined columns of a tall quasimatrix and rows of a wide quasimatrix.
Defining the rows of a tall quasimatrix or the columns of a wide quasimatrix
is less straightforward, and in some senses impossible. Intuitively,
if $\H$ is a space of functions over some index set $\Omega\subseteq\R^{n}$,
then row $\veta$ of a tall quasimatrix is simply the evaluation of
the columns at a location $\veta\in\Omega$ (and likewise for wide
quasimatrices). However, requiring ${\cal H}$ to be a space of functions
is somewhat restrictive. In particular, note that $L_{2}$ spaces
are, strictly speaking, spaces of equivalence classes of functions,
and point-wise evaluation is not really well-defined.

However, note that if ${\cal H}$ is a reproducing kernel Hilbert
space (RKHS), then we can define the notion of rows (or columns) of
a tall (wide) quasimatrix in a way that is consistent with the use
of identity vectors in finite matrices. If $\H$ is a RKHS over $\R^{d}$
then for every $\veta\in\R^{d}$ the point-wise evaluation $\f\in\H\mapsto\f(\veta)$
is a bounded linear transformation. Thus, there is a unique $\e_{\veta}\in\H$
such that for every $\f\in\H$ we have $\e_{\veta}^{\conj}\f=\f(\veta)$.
Thus, we define row $\veta$ of a $\infty\times n$ quasimatrix $\matA$
over a RKHS $\H$ as $\e_{\veta}^{\conj}\matA\in\R^{1\times n}$.
For a $m\times\infty$ quasimatrix $\matB$, column $\veta$ is defined
as $\matB\e_{\veta}$.

 Many notions related to matrices can be easily generalized to quasimatrices.
For example, the rank is the dimension of space spanned by the columns
(rows) of a tall (wide) quasimatrix. Obviously, the rank cannot be
larger then the size of the finite dimension, and similar properties
of matrix rank can be shown for quasimatrix rank.

\subsection{Numerical Computing with Quasimatrices}

In subsequent sections, we describe algorithms that ``operate''
on quasimatrices and functions. Such algorithms assume a model of
computation in which functions are primitive types, and certain operations
between functions are allowed (e.g., taking the integral of a function).
Of course, such computations are not supported in hardware by general
purpose computing machines. However, the software package \textbf{chebfun}\footnote{\href{http://www.chebfun.org/}{http://www.chebfun.org/}}\textbf{
}does provide this abstraction in software~\cite{battles2004extension}.
Thus, we refer to this model of computation as the \emph{chebfun model}.

In numerical computing, it is customary to regard floating-point operations
(FLOPs) as the costly operations, and thus runtime analysis focuses
on counting FLOPs. In the chebfun model, arguably the costly operations
are operations on functions. Thus, when analyzing algorithms in the
chebfun model we count FUNction OPerations (FUNOPs).

Specifically, we assume the following operations are supported, each
costing one FUNOP: multiplying a function by a scalar, adding or subtracting
two functions, evaluating a function at a point, and taking the inner
product of two functions,

Of course, wherever possible we attempt to describe algorithms that
operate under the standard model of computation (no FUNOPs). Such
algorithms usually require additional assumptions on the quasimatrices
involved.

\subsection{Coordinate Representation of Quasimatrices over $L_{2}$ Spaces}

As explained in the previous sections, the rows of a of a tall quasimatrix
or the columns of a quasimatrix cannot be defined for quasimatrices
over $L_{2}$ spaces. For most algorithms we describe that use the
chebfun model this is not an issue. However, when we discuss algorithms
that perform sampling and operate in the standard model, we need access
to rows/columns so they can be sampled. In such cases we need to assume
that the algorithm, when applied to quasimatrices over $L_{2}$, has
additional information in the form of a \emph{coordinate representation
}of the quasimatrix it operates on.
\begin{defn}
Suppose $\matA$ is a quasimatrix over $L_{2}(\Omega,d\mu)$ whose
finite dimension is $n$. A coordinate representation of $\matA$
is a function $\z:\Omega\to\C^{n}$ such that 
\[
\int_{\Omega}\TNormS{\z(\veta)}d\mu(\veta)<\infty
\]
and:
\begin{enumerate}
\item If $\matA$ is tall, for every $\x\in\C^{n}$,\textbf{
\[
\matA\x=\sum_{i=1}^{n}x_{i}\overline{\z(\cdot)_{i}}
\]
}where the above equality should be interpreted in the $L_{2}(\Omega,d\mu)$
sense and $\z(\veta)_{i}$ is coordinate $i$ of $\z(\veta)$ (for
$\veta\in\Omega$).
\item If $\matA$ is wide, for every $\x\in L_{2}(\Omega,d\mu)$ and $j\in[n]$,
\[
\e_{j}^{*}\matA\x=\dotprod{\overline{\z(\cdot)_{j}}}{\x}{L_{2}(\Omega,d\mu)}.
\]
\end{enumerate}
\end{defn}

The definition implies that if $\z$ is a coordinate representation
of $\matA$ then it is also a coordinate representation of $\matA^{\conj}$.
Essentially, for a tall quasimatrix with a coordinate representation
$\z$, column $i$ is $\overline{\z(\cdot)_{i}}$, and for a wide
quasimatrix with a coordinate representation $\z$, row $i$ is $\overline{\z(\cdot)_{i}}^{\conj}$.
If the quasimatrix is defined over $L_{2}(\Omega,d\mu)$, we say that
$\Omega$ is the \emph{index set }of the infinite dimension. We now
say that for an index $\veta\in\Omega$, row $\veta$ of a tall quasimatrix
with coordinate representation $\z$ is $\z(\veta)^{\conj}$, and
column $\veta$ of a wide quasimatrix with coordinate representation
$\z$ is $\z(\veta)$.

Note that the definition also implies the following. For $\matA$
we have,
\[
\matA^{*}\matA=\left[\begin{array}{ccc}
\dotprod{\overline{\z_{\matA}(\cdot)_{1}}}{\z_{\matA}(\cdot)_{1}}{L_{2}(\Omega,d\mu)} & \dots & \dotprod{\overline{\z_{\matA}(\cdot)_{1}}}{\z_{\matA}(\cdot)_{n}}{L_{2}(\Omega,d\mu)}\\
\vdots & \ldots & \vdots\\
\dotprod{\overline{\z_{\matA}(\cdot)_{n}}}{\z_{\matA}(\cdot)_{1}}{L_{2}(\Omega,d\mu)} & \dots & \dotprod{\overline{\z_{\matA}(\cdot)_{n}}}{\z_{\matA}(\cdot)_{n}}{L_{2}(\Omega,d\mu)}
\end{array}\right]=\int_{\Omega}\z_{\matA}(\veta)\z_{\matA}(\veta)^{*}d\mu(\veta)
\]
and similarly for the product $\matB\matB^{*}$.

\subsection{Quasimatrix Factorizations}

Matrix factorizations such as QR and SVD are used to define direct
methods for solving linear regression problems (and more generally,
in matrix analysis at large). Thus, it is no surprising that they
can be used to solve semi-infinite linear regression problems as well,
as was already noted in~\cite{Trefethen10}. Various quasimatrix
factorizations are already mentioned in~\cite{trefethen1997numerical,battles2004extension,Trefethen10},
and are further developed in \cite{townsend2015continuous}. They
can be formulated in our formal quasimatrix framework (previous aforementioned
works used quasimatrices in an informal manner). In Table\ref{tab:quasi_factorizations}
we detail a few key quasimatrix factorizations of a tall quasimatrix
$\matA$. Factorizations for a wide quasimatrix $\matB$ can be obtained
by taking the conjugate transpose of a factorization of $\matB^{\conj}$.
We also detail in Table~\ref{tab:quasi_factorizations} the FUNOPs
cost of forming the various quasimatrix factorization.

\begin{table}[t]
\caption{\label{tab:quasi_factorizations}Factorizations of a tall Quasimatrix
$\protect\matA$ with $n$ columns over ${\cal H}$.}

\centering{}%
\begin{tabular}{|>{\centering}p{1.5cm}|>{\centering}p{5.5cm}|>{\centering}p{3cm}|>{\centering}p{2.5cm}|}
\hline 
 & {\small{}Factorization Form} & {\small{}Reference} & {\small{}FUNOPs in the chebfun model}\tabularnewline
\hline 
\hline 
{\scriptsize{}Reduce QR using Gram-Schmidt} & {\scriptsize{}$\matA=\matQ\matR$}{\scriptsize\par}

{\scriptsize{}$\matQ\in\R^{\infty\times n},\,\matQ^{*}\matQ=\matI_{n}$}{\scriptsize\par}

{\scriptsize{}$\matR\in\mathbb{R}^{n\times n}$ upper diagonal}{\scriptsize\par}

{\scriptsize{}$R_{ij}=\begin{cases}
\dotprod{\a_{i}}{\a_{j}}{\H} & j\geq i\\
0 & j<i
\end{cases}\,.$} & - & {\small{}$n(n+1)$}\tabularnewline
\hline 
{\scriptsize{}Reduce QR using Householder Triangulation} & {\scriptsize{}$\matA=\matQ\matR$}{\scriptsize\par}

{\scriptsize{}$\matQ=\matH_{1}\matH_{2}\cdots\matH_{n}\mat E\matS$}{\scriptsize\par}

{\scriptsize{}$\matH_{1},\dots,\matH_{n}\in{\cal B}(\H,\H)$ Householder
reflectors}{\scriptsize\par}

{\scriptsize{}$\mat E=\left[\begin{array}{cccc}
\e_{1}^{{\cal H}} & \e_{2}^{{\cal H}} & \dots & \e_{n}^{{\cal H}}\end{array}\right]\in\R^{\infty\times n}$}{\scriptsize\par}

{\scriptsize{}$\e_{1}^{{\cal H}},\e_{2}^{{\cal H}},\dots$ predetermined
sequence of orthonormal vectors in $\H$.}{\scriptsize\par}

{\scriptsize{}$\matS\in\mathbb{R}^{n\times n}$ diagonal sign matrix}{\scriptsize\par}

{\scriptsize{}$\matR\in\mathbb{R}^{n\times n}$ upper diagonal}{\scriptsize\par}

{\scriptsize{}$R_{ij}=\dotprod{\e_{i}^{{\cal H}}}{\a_{j}}{\H}$} & {\small{}\cite{Trefethen10}} & {\small{}$3n(3n-1)/2+6n$}\tabularnewline
\hline 
{\scriptsize{}SVD} & {\scriptsize{}$\matA=\matU\Sigma\matV^{*}$}{\scriptsize\par}

{\scriptsize{}$\matU\in\R^{\infty\times n},\,\matU^{*}\matU=\matI_{n}$}{\scriptsize\par}

{\scriptsize{}$\matV\in\mathbb{R}^{n\times n},\,\matV^{*}\matV=\matI_{n}$}\\
{\scriptsize{}$\Sigma\in\mathbb{R}^{n\times n}$ non-negative diagonal
matrix} & {\small{}\cite{battles2004extension,battles2005numerical,koyano2017infinite,kamel2019visual,kandasamy2016learning,Trefethen10}} & QR {\small{}cost} + $(2n-1)n$\tabularnewline
\hline 
\end{tabular}
\end{table}

Using the SVD factorization, we define the condition number of a quasi-matrix
to be $\kappa(\matA)\coloneqq\sigma_{1}/\sigma_{n}$ where $\Sigma=\diag{\sigma_{1},\dots,\sigma_{n}}$
in the SVD factorization.

\section{\label{sec:SILR}Semi-Infinite Linear Regression: Problem Statement
and Examples}

In this paper, we are mainly concerned with the solution of regularized
linear least squares regression problems with quasimatrices. We specifically
consider ridge regularization (also called Tikhonov regularization).
We call such problems \emph{Semi-Infinite Linear Regression (SILR)}.
Both the overdetermined case and the underdetermined case are considered.
In the overdetermined case, we are given a $\infty\times n$ quasimatrix
$\matA$ over $\H$, a target $\b\in\H$, and a regularization parameter
$\lambda\geq0$. The goal of SILR is to find $\x\in\C^{n}$ such that
\begin{equation}
\HNormS{\matA\x-\b}+\lambda\TNormS{\x}=minimum.\label{eq:over-silr}
\end{equation}
In the underdetermined case, we are given a $n\times\infty$ quasimatrix
$\matA$ over $\H$, a target $\b\in\C^{n}$, and a regularization
parameter $\lambda\geq0$. Our goal is to find a $\x\in\H$ such that
\begin{equation}
\TNormS{\matA\x-\b}+\lambda\HNormS{\x}=minimum.\label{eq:under-silr}
\end{equation}
 For simplicity, in both cases we either assume that $\matA$ has
full rank or that $\lambda>0$. This makes the solution unique, and
we always denote it by $\x^{\star}$.

We now give examples in which SILR is involved. We focus on cases
where SILR is solved approximately by sampling the quasimatrix in
order to turn the problem in regular finite linear regression problem.

\subsection{\label{subsec:LS_approx}Least Squares Approximation of a Function}

Suppose we are given a function $\f\in\H=L_{2}([-1,1],d\lambda)$
(or any other Hilbert space), and a finite dimensional subspace ${\cal V}$
of $\H$ (e.g., the space of polynomials up to a certain degree).
We want to find the optimal approximation (in the $\H$ sense) of
$\f$ in ${\cal V}$, which we denote by $f_{{\cal V}}$. Denote by
$n$ the dimension of ${\cal V}$, and let $\v_{1},\dots,\v_{n}$
be a basis for ${\cal V}$. Define the $\infty\times n$ quasimatrix
$\matA=\left[\begin{array}{ccc}
\v_{1} & \cdots & \v_{n}\end{array}\right]$. Then, $f_{{\cal V}}=\mat A\x^{\star}$ where 
\begin{equation}
\x^{\star}=\arg\min_{\x\in\mathbb{R}^{n}}\XNormS{\mat A\x-\f}{\H}\,.\label{eq:approx_fun-1}
\end{equation}

A closely related problem is the problem of reconstructing an unknown
function $f$ on a domain $\X$ from samples at randomly chosen points
\cite{cohen2013stability}. In this problem setting we are given $y_{i}=f(\x_{i})+\epsilon_{i}$
at $m$ given data points $\x_{1},\dots,\x_{m}$ sampled i.i.d from
some distribution $\rho$ on $\X$ (we do not assume we have an explicit
formula for $\rho,$or that we can produce additional samples; we
only assume such a distribution exists). The scalars $\epsilon_{1},\dots,\epsilon_{n}$
are noise terms, which might be zero in the noiseless case. We can
connect this problem to Eq.~(\ref{eq:approx_fun-1}) in the following
way, originally discussed in~\cite{cohen2013stability}. We setup
a finite dimensional subspace ${\cal V}$ and try to approximate $f_{{\cal V}}$
via sampling. Specifically, let $\mat A_{s}\in\mathbb{R}^{m\times n}$
be a ``rows sample'' of the quasimatrix $\matA$, i.e., defined
by $(\matA_{s})_{ij}=\v_{j}(\x_{i})$ , and let
\begin{equation}
\tilde{\x}=\arg\min_{\x\in\mathbb{R}^{n}}\XNormS{\matA_{s}\x-\y}2\,.\label{eq:sample-approx-fun}
\end{equation}
The approximation is $\tilde{f_{{\cal V}}}=\matA\tilde{\x}$. In \cite{cohen2013stability}
the authors provide a criterion on $s$ that describes the needed
amount of samples to ensure that the least squares method is stable
and that its accuracy is comparable to the best approximation error
of $\f$ by elements from ${\cal V}$. Note that Eq.~(\ref{eq:sample-approx-fun})
is a sampled version of Eq.~(\ref{eq:approx_fun-1}). We discuss
solving SILR problems using sampling in Section~\ref{sec:sampling}.

\subsection{\label{subsec:krr}Kernel Ridge Regression}

Kernel ridge regression is an important method for supervised learning.
Recall the problem of supervised learning: given training data $(\x_{1},y_{1}),\dots,(\x_{n},y_{n})\in\X\times\Y$,
where $\X\subseteq\R^{d}$ is an input domain and $\Y\subseteq\R$
is an output domain, we wish to infer some functional dependency between
the outputs and the inputs \cite{Cucker02onthe}. In \emph{kernel
ridge regression, }one starts with a positive definite kernel function
$k:\X\times\X\to\R$. The kernel is associated with a reproducing
kernel Hilbert space (RKHS) $\H_{k}$ which is the completion of the
function space
\[
\left\{ \sum_{i=1}^{m}\alpha_{i}k(\x_{i},\cdot)\,|\,\x_{i}\in\X,\alpha_{i}\in\R,m\in\Z_{+}\right\} 
\]
equipped with the inner product 
\[
\dotprod{\sum_{i=1}^{m}\alpha_{i}k(\x_{i},\cdot)}{\sum_{j=1}^{n}\beta_{j}k(\x_{j},\cdot)}{{\cal H}_{k}}=\sum_{i=1}^{m}\sum_{j=1}^{n}\alpha_{i}\beta_{j}k(\x_{i},\x_{j})\,.
\]
For some $\lambda>0$, the kernel ridge regression estimator is 
\begin{equation}
\f^{\star}=\arg\min_{\f\in\H_{k}}\sum_{i=1}^{n}(\f(\x_{i})-y_{i})^{2}+\lambda\XNormS{\f}{\H_{k}}\,.\label{eq:krr}
\end{equation}
The celebrated Representer Theorem \cite{SHB01} guarantees that $\f^{\star}$
can be written as 
\begin{equation}
\f^{\star}(\x)=\sum_{i=1}^{n}\alpha_{i}^{\star}k(\x_{i},\x)\label{eq:representer-expansion}
\end{equation}
for some $\alpha_{1}^{\star},\dots,\alpha_{n}^{\star}\in\R$ (note
that $k(\x_{i},\cdot)\in\H_{k}$ so $\sum_{i=1}^{n}\alpha_{i}^{\star}k(\x_{i},\cdot)\in\H_{k}$).
Simple linear algebra now implies that we can find $\alpha_{1},\dots,\alpha_{n}$
by solving the linear system 
\begin{equation}
(\matK+\lambda\matI_{n})\valpha=\y\label{eq:krr-linear-eq}
\end{equation}
where $\matK\in\R^{n\times n}$ is the matrix defined by $K_{ij}=k(\x_{i},\x_{j}$)
and $\y=\left[y_{1}\cdots y_{n}\right]^{\T}\in\R^{n}$.

\subsubsection{KRR as Semi-Infinite Linear Regression}

We now show how Eq.~(\ref{eq:krr}) can be written as a SILR problem.
Define the $n\times\infty$ quasimatrix $\matA$ over $\H_{k}$:

\[
\matA=\left[\begin{array}{c}
k(\cdot,\x_{1})^{\conj}\\
\vdots\\
k(\cdot,\x_{n})^{\conj}
\end{array}\right]\,.
\]
Due to the reproducing property of RKHS, $\dotprod{\f}{k(\cdot,\x_{j})}{{\cal H}_{k}}=\f(\x_{j})=(\matA\f)_{j}$
and we have 
\begin{equation}
\f^{\star}=\arg\min_{\f\in\H_{k}}\TNormS{\matA\f-\y}+\lambda\XNormS{\f}{\H_{k}}\,.\label{eq:krr-rkhs}
\end{equation}
Thus, the kernel ridge regression estimator is the solution to an
underdetermined SILR problem. In fact, using Eq.~(\ref{eq:krr-linear-eq})
to solve Eq.~(\ref{eq:krr-rkhs}) is an instance of a direct method
for solving underdetermined SILR problems; see Section~\ref{subsec:direct-under}.

In Eq.~(\ref{eq:krr-rkhs}), the quasimatrix $\matA$ is defined
over a RKHS. In certain cases, the problem can be cast as a SILR problem
with quasimatrices defined over a $L_{2}$ space, and this leads to
approximation methods based on sampling. The following is based on
the seminal work of Rahimi and Recht on random Fourier features \cite{rahimi2008random}.
Suppose that $k$ is a shift-invariant positive definite function,
that is $k(\x,\z)=k(\x-\z)$ for some positive definite $k(\cdot)$
(note that we abuse notation in denoting by $k$ both the kernel and
the positive definite function that defines it). Further assume that
$k$ is normalized in the sense that $k(\x,\x)=1$. According to Bochner's
Theorem, there exists a probability measure $\mu$ such that 
\[
k(\x,\z)=k(\x-\z)=\int_{\R^{d}}e^{-2\pi i(\x-\z)^{\T}\veta}d\mu(\veta)\,.
\]
Define the function $\varphi:\X\times\R^{d}\to\C$:
\[
\varphi(\x,\veta)=e^{2\pi i\x^{\T}\veta}\,.
\]
For fixed $\x,\z\in\X$ we have 
\[
\dotprod{\varphi(\x,\cdot)}{\varphi(\z,\cdot)}{L_{2}(\R^{d},d\mu)}=\int_{\R^{d}}e^{-2\pi i(\x-\z)^{\T}\veta}d\mu(\veta)=\int_{\R^{d}}e^{-2\pi i(\x-\z)^{\T}\veta}p(\veta)d\veta=k(\x,\z)
\]
so $\varphi(\x,\cdot)\in L_{2}(\R^{d},d\mu)$ for every $\x\in\X$.
Let us now define the $n\times\infty$ quasimatrix $\matB$ over $L_{2}(\R^{d},d\mu)$:

\begin{equation}
\matB=\left[\begin{array}{c}
\varphi(\x_{1},\cdot)^{\conj}\\
\vdots\\
\varphi(\x_{n},\cdot)^{\conj}
\end{array}\right]\,.\label{eq:quasi_RFF}
\end{equation}

\begin{lem}
Assuming that $\matK$ is full rank or $\lambda>0$, the following
holds: 
\[
\f^{\star}(\x)=\dotprod{\varphi(\x,\cdot)}{\w^{\star}}{L_{2}(\R^{d},d\mu)}
\]
where 
\begin{equation}
\w^{\star}=\arg\min_{\w\in L_{2}(\R^{d},d\mu)}\TNormS{\matB\w-\y}+\lambda\XNormS{\w}{L_{2}(\R^{d},d\mu)}\,.\label{eq:l2_krr}
\end{equation}
\end{lem}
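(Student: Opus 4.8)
The plan is to write both $\f^{\star}$ and $\w^{\star}$ in closed form in terms of the \emph{same} coefficient vector $\valpha^{\star}:=(\matK+\lambda\matI_{n})^{-1}\y$ (well defined precisely because $\matK$ is full rank or $\lambda>0$), and then to translate between the $\H_{k}$-representation of $\f^{\star}$ and the $L_{2}$-representation of $\w^{\star}$ using the identity $\dotprod{\varphi(\x,\cdot)}{\varphi(\z,\cdot)}{L_{2}(\R^{d},d\mu)}=k(\x,\z)$ established just before Eq.~(\ref{eq:quasi_RFF}).

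First I would record the RKHS side: the Representer Theorem, Eq.~(\ref{eq:representer-expansion}), together with Eq.~(\ref{eq:krr-linear-eq}), gives $\f^{\star}=\sum_{i=1}^{n}\alpha_{i}^{\star}k(\cdot,\x_{i})$ with $\valpha^{\star}=(\matK+\lambda\matI_{n})^{-1}\y$, so that $\f^{\star}(\x)=\sum_{i=1}^{n}\alpha_{i}^{\star}k(\x,\x_{i})$.

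Next I would solve Eq.~(\ref{eq:l2_krr}) explicitly. Its objective is convex on the Hilbert space $L_{2}(\R^{d},d\mu)$ and, by the standing assumption of Section~\ref{sec:SILR}, has a unique minimizer; hence it suffices to produce a point satisfying the associated normal equations $(\matB^{\conj}\matB+\lambda\matI)\w=\matB^{\conj}\y$ and invoke convexity. I would claim $\w^{\star}:=\matB^{\conj}\valpha^{\star}=\sum_{i=1}^{n}\alpha_{i}^{\star}\varphi(\x_{i},\cdot)$ is such a point: substituting $\w=\matB^{\conj}\valpha^{\star}$ and using associativity of the product algebra reduces the normal equations to $(\matB\matB^{\conj}+\lambda\matI_{n})\valpha^{\star}=\y$, and the computation preceding Eq.~(\ref{eq:quasi_RFF}) shows $(\matB\matB^{\conj})_{ij}=\dotprod{\varphi(\x_{i},\cdot)}{\varphi(\x_{j},\cdot)}{L_{2}(\R^{d},d\mu)}=k(\x_{i},\x_{j})=K_{ij}$, i.e.\ $\matB\matB^{\conj}=\matK$, so the reduced equation is exactly the definition of $\valpha^{\star}$. (This is the direct method for underdetermined SILR; cf.\ Section~\ref{subsec:direct-under}. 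In the unregularized full-rank case the same $\w^{\star}$ is forced, since $\matB\w^{\star}=\matK\valpha^{\star}=\y$ makes it the minimum-norm zero-residual solution.) Thus $\w^{\star}=\matB^{\conj}(\matB\matB^{\conj}+\lambda\matI_{n})^{-1}\y=\sum_{i=1}^{n}\alpha_{i}^{\star}\varphi(\x_{i},\cdot)$ with the \emph{same} $\valpha^{\star}$ as on the RKHS side.

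Finally I would combine the two representations. Using linearity of the $L_{2}$ inner product in its second argument together with $\alpha_{i}^{\star}\in\R$,
\[
\dotprod{\varphi(\x,\cdot)}{\w^{\star}}{L_{2}(\R^{d},d\mu)}=\sum_{i=1}^{n}\alpha_{i}^{\star}\,\dotprod{\varphi(\x,\cdot)}{\varphi(\x_{i},\cdot)}{L_{2}(\R^{d},d\mu)}=\sum_{i=1}^{n}\alpha_{i}^{\star}k(\x,\x_{i})=\f^{\star}(\x),
\]
where the second equality is the displayed identity above (and one may additionally use $k(\x,\x_{i})=k(\x_{i},\x)$ of the real shift-invariant kernel). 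I expect the calculations themselves to be routine; the parts that call for care are the adjoint/conjugate bookkeeping when moving between $\matB$ and $\matB^{\conj}$ and between the $\H_{k}$ and $L_{2}$ inner products, and — what I anticipate being the real crux — justifying that the explicitly constructed $\w^{\star}$ is the genuine minimizer of Eq.~(\ref{eq:l2_krr}) (via convexity plus the normal equations, and uniqueness from the standing assumption) rather than merely a stationary point.
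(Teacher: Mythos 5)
Your proof is correct and reaches the same key intermediate fact as the paper --- namely $\w^{\star}=\matB^{\conj}\valpha^{\star}$ with $\valpha^{\star}=(\matK+\lambda\matI_{n})^{-1}\y$, followed by the identical inner-product computation --- but it gets there by a different argument. The paper \emph{derives} the form of the minimizer: it decomposes $\w^{\star}=\matB^{\conj}\v^{\star}+\z$ with $\z\perp\range{\matB^{\conj}}$, uses boundedness of $\matB$ to get $\matB\z=0$, argues $\z=0$ because it can only increase the penalty, and then solves the resulting finite-dimensional problem $\min_{\v}\TNormS{\matK\v-\y}+\lambda\v^{\T}\matK\v$. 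You instead \emph{verify} a candidate: you check that $\matB^{\conj}\valpha^{\star}$ satisfies the infinite-dimensional normal equations $(\matB^{\conj}\matB+\lambda\matI)\w=\matB^{\conj}\y$ and invoke convexity plus uniqueness. Your route is shorter once one accepts that stationarity characterizes the minimizer of a convex quadratic on a Hilbert space (immediate from strong convexity when $\lambda>0$); the paper's route avoids writing the operator normal equations at all and makes the membership $\w^{\star}\in\range{\matB^{\conj}}$ transparent. One point in your favor: the paper's step ``$\z\neq0$ can only increase $\lambda\XNormS{\w}{L_{2}(\R^{d},d\mu)}$'' is vacuous when $\lambda=0$, so the paper's proof as written really only covers $\lambda>0$; your parenthetical remark that in the unregularized full-rank case $\matB^{\conj}\valpha^{\star}$ is the minimum-norm zero-residual solution (since it lies in $\range{\matB^{\conj}}=(\nully{\matB})^{\perp}$) handles that case more explicitly, though it does rely on reading the $\lambda=0$ ``$\arg\min$'' as the minimum-norm convention, an ambiguity the lemma statement itself leaves open.
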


\begin{proof}
Let 
\[
\w^{\star}=\arg\min_{\w\in L_{2}(\R^{d},d\mu)}\TNormS{\matB\w-\y}+\lambda\XNormS{\w}{L_{2}(\R^{d},d\mu)}\,.
\]
Since $\range{\mat B^{*}}$ is a a closed linear subspace of ${\cal H}$,
there exists $\mat{\v}^{\star}\in\mathbb{R}^{n}$ such that $\w^{\star}=\mat B^{*}\mat{\v}^{\star}+\z$
where $\z\perp\range{\mat B^{*}}.$ Since $\matB$, viewed as an operator,
is bounded, $\nully{\matB}=\left(\range{\mat B^{*}}\right)^{\perp}$,
so $\mat B\z=0$. Now, since $\z\neq0$ can only increase $\lambda\XNormS{\w}{L_{2}(\R^{d},d\mu)}$
we conclude that $\z=0$. Thus, $\w^{\star}=\mat B^{*}\mat{\v}^{\star}$
and we can write
\[
\min_{\w\in L_{2}(\R^{d},d\mu)}\TNormS{\matB\w-\y}+\lambda\XNormS{\w}{L_{2}(\R^{d},d\mu)}=\min_{\v\in\mathbb{R}^{n}}\TNormS{\matB\mat B^{*}\v-\y}+\lambda\XNormS{\mat B^{*}\v}{L_{2}(\R^{d},d\mu)}=\min_{\v\in\mathbb{R}^{n}}\TNormS{\matK\v-\y}+\lambda\v^{\T}\matK\v
\]
where $\matK=\matB\mat B^{*}\in\mathbb{R}^{n\times n}$ is the kernel
matrix previously defined. The optimal solution is $\v^{\star}=(\matK+\lambda\matI_{n})^{-1}\y$,
i.e., $\v^{\star}=\valpha^{\star}$, so $\w^{\star}=\sum_{j=1}^{n}\alpha_{j}^{\star}\varphi(\x_{j},\cdot)$.
We now have
\begin{align*}
\dotprod{\varphi(\x,\cdot)}{\w^{\star}}{L_{2}(\R^{d},d\mu)} & =\int_{\R^{d}}\overline{\varphi(\x,\veta)}\w^{\star}(\veta)d\mu(\veta)\\
 & =\int_{\R^{d}}e^{-2\pi i\x^{\T}\veta}\left(\sum_{j=1}^{n}\alpha_{j}^{\star}e^{2\pi i\x_{j}^{\T}\veta}\right)d\mu(\veta)\\
 & =\sum_{j=1}^{n}\alpha_{j}^{\star}\int_{\R^{d}}e^{-2\pi i(\x-\x_{j})^{\T}\veta}d\mu(\veta)\\
 & =\sum_{j=1}^{n}\alpha_{j}^{\star}k(\x,\x_{j})=\f^{\star}(\x)\,.
\end{align*}
\end{proof}
The quasimatrix $\matB$ is over complex-valued $L_{2}$ spaces. It
is possible to actually define an equivalent SILR problem with a quasimatrix
over a real-valued $L_{2}$ space. Let $\hat{\Omega}=\R^{d}\times[0,2\pi]$
and $\hat{\mu}=\mu\times U(0,2\pi)$ where $U(0,2\pi$) is the uniform
measure on $[0,2\pi${]}. Now, let $L_{2}(\hat{\Omega},d\hat{\mu}$)
denote the space of \emph{real-valued} square integrable functions
with respect to the measure $\hat{\mu}$. Define the function $\hat{\varphi}:{\cal X}\times\hat{\Omega}\to\R$:
\[
\hat{\varphi}(\x,(\veta,b))=\sqrt{2}\cos(\x^{\T}\veta+b)\,.
\]
Now, let
\[
\matC=\left[\begin{array}{c}
\hat{\varphi}(\x_{1},\cdot)^{\conj}\\
\vdots\\
\hat{\varphi}(\x_{n},\cdot)^{\conj}
\end{array}\right]\,.
\]
Then, 
\[
\f^{\star}(\x)=\dotprod{\hat{\varphi}(\x,\cdot)}{\u^{\star}}{L_{2}(\hat{\Omega},d\hat{\mu})}
\]
where 
\[
\u^{\star}=\arg\min_{\u\in L_{2}(\hat{\Omega},d\hat{\mu})}\TNormS{\matC\u-\y}+\lambda\XNormS{\u}{L_{2}(\hat{\Omega},d\hat{\mu})}\,.
\]
See~\cite{rahimi2008random}.

\subsubsection{\label{subsec:KRR_samp}Approximating KRR using Quasimatrix Sampling}

Computing the exact KRR estimator is costly (since $\matK$ is typically
dense, finding $\mat{\alpha}$ in Eq.~(\ref{eq:krr-linear-eq}) costs
$O(n^{3})$ using direct methods; computing $\f^{\star}(\x)$ for
some $\x$ using Eq.~(\ref{eq:representer-expansion}) costs $O(nd)$;
since computing $\f^{\star}$ requires storing the entire training
set, storage requirements for holding a representation of $\f^{\star}$
is $O(nd$)), which motivates looking for some approximation schemes.
In this section we show how to perform approximate KRR by sampling
the quasimatrix $\matB$ defined in the previous subsection. The resulting
method is actually identical to approximating KRR using random Fourier
features, one of the most popular approximation of KRR, though the
presentation as a sampling method for finding an approximate solution
to a SILR problem is new.

Consider the wide quasimatrix $\matB$ defined in the previous subsection.
A coordinate representation of $\matB$ is
\[
\z(\veta)=\left[\begin{array}{c}
\overline{\varphi(\x_{1},\veta)}\\
\vdots\\
\overline{\varphi(\x_{n},\veta)}
\end{array}\right]\,.
\]
This allows us to discuss column sampling of $\matB$. For $s\leq n$,
consider the matrix $\matB_{\veta}\in\C^{n\times s}$ obtained by
column sampling $\matB$ according to $\mu$. That is, we sample $\veta_{1},\dots,\veta_{s}$
according to $\mu$ and define the matrix 
\[
\matB_{\veta}=\left[\begin{array}{cccc}
\z(\veta_{1}) & \z(\veta_{2}) & \cdots & \z(\veta_{s})\end{array}\right]=\left[\begin{array}{ccc}
\overline{\varphi(\x_{1},\veta_{1})} & \cdots & \overline{\varphi(\x_{1},\veta_{s})}\\
\vdots &  & \vdots\\
\overline{\varphi(\x_{n},\veta_{1})} & \cdots & \overline{\varphi(\x_{n},\veta_{s})}
\end{array}\right]\,.
\]
Let 
\[
\w_{\veta}^{\star}=\arg\min_{\w\in\mathbb{\C}^{s}}\TNormS{\matB_{\veta}\w-\y}+\lambda\XNormS{\w}2\,.
\]
Finding $\w_{\veta}^{\star}$ amounts to solving a finite linear least
squares problem, and can be accomplished using $O(ns^{2})$ arithmetic
operations (and, notably, without performing any FUNOPs). The approximate
KRR estimator is 
\[
\f_{\veta}(\x)=\sum_{i=1}^{s}\overline{\varphi(\x,\eta_{i})}(w_{\veta}^{\star})_{i}
\]
where $(w_{\veta}^{\star})_{i}$ denotes entry $i$ of $\w_{\veta}^{\star}$.
In a sense, the vector $\w_{\veta}^{\star}$ is an approximation of
the function $\w^{\star}$ that is obtained by solving a sampled version
of Eq.~(\ref{eq:l2_krr}), and $\f_{\veta}$ approximates the inner
product $\dotprod{\varphi(\x,\cdot)}{\w^{\star}}{L_{2}({\cal X},d\mu)}$.

\subsection{\label{subsec:stretching_ls}Stretching a Finite Linear Least Squares
Problem}

Since numerical computing is typically done with numbers and not with
functions, it is natural to find an approximate solution to SILR problems
by sampling the quasimatrix. Here, we show that it is also possible
to go the other way, and ``stretch'' a finite linear least squares
problem to a SILR problem. This process is interesting since it yields
a novel interpretation to the use of the Johnson-Lindenstrauss sketch
in order to approximately solve a linear regression problem.

Suppose that $\matX\in\R^{n\times d}$ is a full rank matrix with
$n\gg d$, and that $\y\in\R^{n}$. Consider finding $\w^{\star}$
that minimizes $\TNormS{\matX\w-\y}$. Define the function
\[
\varphi(\x,\veta)=\veta^{\T}\x,
\]
and let $p$ denote the standard Gaussian density over $\R^{n}$.
We have 
\[
\int_{\R^{n}}\varphi(\x,\veta)^{2}p(\veta)d\veta=\x^{\T}\left(\int_{\R^{n}}\veta\veta^{\T}p(\veta)d\veta\right)\x=\x^{\T}\x,
\]
so for $\x\in\R^{n}$ it holds that $\varphi(\x,\cdot)\in L_{2}(\R^{n},d\mu)$
where $d\mu$ denotes the standard Gaussian distribution. Denote by
$\x_{1},\dots,\x_{d}$ the columns of $\matX$. Define the $\infty\times d$
quasimatrix $\matA$ over $L_{2}(\R^{n},d\mu)$: $\matA=\left[\begin{array}{ccc}
\varphi(\x_{1},\cdot) & \cdots & \varphi(\x_{d},\cdot)\end{array}\right]$.We now show that 
\begin{equation}
\w^{\star}=\arg\min_{\w\in\R^{d}}\XNormS{\matA\w-\varphi(\y,\cdot)}{L_{2}(\R^{n},d\mu)}\,.\label{eq:silr-from-finite}
\end{equation}
Indeed, for every $\w\in\R^{d}$ we have 
\begin{align*}
\XNormS{\matA\w-\varphi(\y,\cdot)}{L_{2}({\cal X},d\mu)} & =\int_{\R^{n}}\left(\sum_{i=1}^{d}(\veta^{\T}\x_{i})w_{i}-\veta^{\T}\y\right)^{2}p(\veta)d\veta\\
 & =\int_{\R^{n}}\left(\veta^{\T}\left(\matX\w-\y\right)\right)^{2}p(\veta)d\veta\\
 & =\TNormS{\matX\w-\y}.
\end{align*}
Thus, we have converted the finite linear least squares problem to
a SILR problem.

Let us now consider approximately solving Eq.~(\ref{eq:silr-from-finite})
by sampling ``rows'' from $\matA$ and the corresponding entries
from $\varphi(\y,\cdot)$. Since $\matA$ is a quasimatrix over a
$L_{2}$ space, we need a coordinate representation to meaningfully
talk about sampling rows from $\matA$. A coordinate representation
of $\matA$ is $\z(\veta)=\matX^{\T}\veta$where the index set is
$\Omega=\R$. We can now sample Eq.~(\ref{eq:silr-from-finite})
as follows. We sample $\veta_{1},\dots,\veta_{s}\in\R^{n}$ independently
according to $p$ and form the matrix 
\[
\matA_{\veta}=\left[\begin{array}{c}
\z(\veta_{1})^{\T}\\
\z(\veta_{2})^{\T}\\
\vdots\\
\z(\veta_{s})^{\T}
\end{array}\right]
\]
and the vector $\y_{\veta}=(\veta_{i}^{\T}\y)_{i=1}^{s}$. The sampled
problem (which is again a finite linear least squares problem) is
\[
\hat{\w}=\TNormS{\matA_{\veta}\w-\y_{\veta}}\,.
\]

One may ask, whether $\hat{\w}$ is close to a be a minimizer of $\TNormS{\matX\w-\y}$.
Let $\matS$ be the $s\times n$ matrix whose rows are $\veta_{1},\dots,\veta_{s}$,
then $\matA_{\veta}=\matS\matX$ and $\y_{\veta}=\matS\y$. Using
known results on subspace embedding \cite{Woodruff14} we conclude
that if $s=\Omega(d/\epsilon^{2})$ then with high probability 
\[
\TNorm{\matX\hat{\w}-\y}\leq(1+\epsilon)\TNorm{\matX\w^{\star}-\y}\,.
\]
The random matrix $\matS$ is a Johnson-Lindenstrauss sketching matrix,
and we have demonstrated that applying the Johnson-Lindenstrauss sketch
corresponds to stretching the linear least squares problem and then
applying plain row sampling. A surprising aspect of this observation
is the fact that only $O(d/\epsilon^{2})$ samples are sufficient.
Indeed, standard techniques used for analyzing sampled linear least
squares problems, which are based on matrix tail inequalities, can
be used to derive results that require $\Omega(d\log d/\epsilon^{2})$
samples at the very least; see Section~\ref{sec:sampling}.

\section{\label{sec:Direct}Direct Methods}

Direct methods attempt to compute the solutions of SILR problems using
quasimatrix operations. This mostly involves FUNOPs (under the chebfun
model), but in some cases the computation can be reduced to algorithms
that operate in the standard model (without FUNOPs). For simplicity,
we assume that the quasimatrix involved is either full rank or $\lambda>0$.

\subsection{Overdetermined SILR}

Let $\matA$ be a tall quasimatrix with $n$ columns. We can solve
the SILR problem in Eq.~(\ref{eq:over-silr}) using the normal equations.
The development is essentially the same as for the finite linear least
squares case. Let $f(\x)=\HNormS{\matA\x-\b}+\lambda\TNormS{\x}$
be the objective function. We have, 
\begin{align*}
f(\x) & =\HNormS{\matA\x-\b}+\lambda\TNormS{\x}=\x^{\T}(\matA^{\conj}\matA+\lambda\matI_{n})\x-2\realpart{\x^{\T}\matA^{\conj}\b}+\HNormS{\b}\,.
\end{align*}
Thus the optimum value is obtained as the solution of the following
linear system:
\[
(\matA^{\conj}\matA+\lambda\matI_{n})\x=\matA^{\conj}\b
\]
(easily verified by computing the gradient of $f(\x)$ and equating
to zero). Note that since we assumed that either $\matA$ is full
rank or $\lambda>0$, $\matA^{\conj}\matA+\lambda\matI_{n}$ is invertible.

Thus, we can find the optimal $\x^{\star}$ in the chebfun model by
first computing $\matA^{\conj}\matA+\lambda\matI_{n}$ and $\matA^{\conj}\b$
($n(n+3)/2$ FUNOPs), and then solving an $n\times n$ linear system
($O(n^{3})$ FLOPs). However, using the Gram matrix $\matA^{\conj}\matA$
entails a squaring of the condition number, so the use of a factorization
is preferred numerically. It is simple algebra to show that if $\matA=\matQ\matR$
is a reduced QR factorization of $\matA$ then $\x^{\star}=(\matR+\lambda\matR^{-\conj})^{-1}\matQ^{\conj}\b$,
so $n$ FUNOPs and $O(n^{3})$ FLOPs ($O(n^{2})$ if $\lambda=0$)
are needed once we have a QR factorization. However, this entails
explicitly inverting $\matR^{\conj}$, and that matrix might be ill-conditioned.

If $\matA$ was a matrix, explicitly inverting $\matR$ can be avoiding
by factorizing the augmented matrix 
\[
\hat{\matA}=\left[\begin{smallmatrix}\begin{array}{c}
\matA\\
\sqrt{\lambda}\matI_{n}
\end{array}\end{smallmatrix}\right]\,.
\]
 However, if $\matA$ is a quasimatrix over $\H$ then $\hat{\matA}$
is a quasimatrix over $\H\times\C^{n}$ (if column $j$ of $\matA$
is $\a_{j}$, then column $j$ of the augmented quasimatrix is the
tuple $(\a_{j},\sqrt{\lambda}\e_{j})$ where $\e_{j}$ is the $j$-th
identity vector in $\C^{n}$), possibly making computations more cumbersome\footnote{We remark that the chebfun library does support hybrids of quasimatrices
and a matrices (and calls such objects by the name ``chebmatrix'').}. We now show how it is possible to find $\x^{\star}$ using a QR
factorization of $\matA$ without explicitly inverting $\matR$. This
involves fairly standard linear algebra tricks. Suppose we have a
QR factorization $\matA=\matQ_{\matA}\matR_{\matA}$. Let $\matC$
be the matrix obtained by augmenting $\matR_{\matA}$ with the matrix
$\sqrt{\lambda}\matI_{n}$, and form a QR factorization of it. That
is, 
\[
\matC=\left[\begin{array}{c}
\matR_{\matA}\\
\sqrt{\lambda}\matI_{n}
\end{array}\right]=\matQ_{\matC}\matR_{\matC}\,.
\]
Let $\matQ_{\matC,1}$ denote the top $n$ rows of $\matQ_{\matC}$,
and $\matQ_{\matC,2}$ the bottom. Now,
\[
\left[\begin{array}{c}
\matA\\
\sqrt{\lambda}\matI_{n}
\end{array}\right]=\left[\begin{array}{cc}
\matQ_{\matA}\\
 & \matI_{n}
\end{array}\right]\matC=\left[\begin{array}{cc}
\matQ_{\matA}\\
 & \matI_{n}
\end{array}\right]\left[\begin{array}{c}
\matQ_{\matC,1}\\
\matQ_{\matC,2}
\end{array}\right]\matR_{\matC}=\left[\begin{array}{c}
\matQ_{\matA}\matQ_{\matC,1}\\
\matQ_{\matC,2}
\end{array}\right]\matR_{\matC}\,.
\]
Also, since
\[
\left[\begin{array}{cc}
\matQ_{\matC,1}^{\conj}\matQ_{\matA}^{\conj} & \matQ_{\matC,2}^{\conj}\end{array}\right]\left[\begin{smallmatrix}\begin{array}{c}
\matQ_{\matA}\matQ_{\matC,1}\\
\matQ_{\matC,2}
\end{array}\end{smallmatrix}\right]=\matI_{n}\,,
\]
we have a QR factorization of $\hat{\matA}$. This implies that $\x^{\star}$
is the solution of the triangular system
\[
\matR_{\matC}\x=\matQ_{\matC,1}^{*}\matQ_{\matA}^{*}\b\,.
\]

As is the case of finite linear least squares, a reduced SVD can be
used to solve SILR problems as well. If $\matA=\matU\Sigma\matV^{\conj}$
is a reduced SVD factorization, then simple algebra reveals that $\x^{\star}=\matV(\Sigma^{2}+\lambda\matI_{n})^{-1}\Sigma\matU^{\conj}\b$,
so $n$ FUNOPs and $O(n^{2})$ FLOPs are needed once we have an SVD
factorization.

In certain cases it might be possible to compute $\matA^{\conj}\matA$
and $\matA^{\conj}\b$ analytically, without resorting to FUNOPs.
We give a concrete example later, when we discuss the underdetermined
case.

\subsection{\label{subsec:direct-under}Underdetermined SILR}

Let $\matA$ be a wide quasimatrix with $n$ rows. Again, the following
argument follows closely the one used for finite linear least squares.
The space $\range{\matA^{\conj}}$ is a closed linear subspace of
${\cal H}$, so we can write $\x^{\star}=\matA^{\conj}\y^{\star}+\z^{\star}$
where $\y\in\R^{n}$ and $\z\perp\range{\matA^{\conj}}.$ Since $\matA$,
viewed as an operator, is bounded, $\nully{\matA}=\left(\range{\matA^{\conj}}\right)^{\perp}$,
so $\matA\z=0$. Thus, the objective at $\x^{\star}$ is 
\[
\TNormS{\matA\x^{\star}-\b}+\lambda\HNormS{\x^{\star}}=\TNormS{\matA\matA^{\conj}\y^{\star}-\b}+\lambda\HNormS{\matA^{\conj}\y^{\star}}+\lambda\HNormS{\z^{\star}},
\]
where we used the fact that $\z^{\star}\perp\matA^{\conj}\y^{\star}.$
Obviously, $\z^{\star}=0$, otherwise the objective can be reduced.
Denoting $\matK=\matA\matA^{\conj}\in\R^{n\times n}$, we find that
$\y^{\star}$ is the minimizer of 
\[
f(\y)=\TNormS{\matK\y-\b}+\lambda\y^{\T}\matK\y\,.
\]
This can be written as a determined (for $\lambda=0)$ or overdetermined
($\lambda>0)$ finite linear least squares problems, from which we
find that $\y^{\star}$ solves the equation 
\[
(\matK^{2}+\lambda\matK)\y=\matK\b.
\]
Since we assumed that either $\matA$ is full rank or $\lambda>0$,
$\matK+\lambda\matI_{n}$ is invertible, and the vector $(\matK+\lambda\matI_{n})^{-1}\b$
solves the equation. The solution is unique, so $\y^{\star}=(\matK+\lambda\matI_{n})^{-1}\b$.
We find that 
\[
\x^{\star}=\matA^{\conj}(\matK+\lambda\matI_{n})^{-1}\b\,.
\]
Thus, in the chebfun model we can find the optimal $\x^{\star}$ by
first computing $\matK+\lambda\matI_{n}$ ($n(n+1)/2$ FUNOPs), solving
for $\y^{\star}$ ($O(n^{3})$ FLOPs), and finally computing $\matA^{\conj}\y^{\star}$
($n$ FUNOPs).

We can avoid forming the potentially ill-conditioned matrix $\matK$
using a QR factorization of $\matA^{\conj}$ in a way similar to the
previous subsection, where now we have a QR factorization $\matA^{*}=\matQ_{\matA}\matR_{\matA}$.
Similarly, we can factorize
\[
\left[\begin{array}{c}
\matA^{*}\\
\sqrt{\lambda}\matI_{n}
\end{array}\right]=\left[\begin{array}{c}
\matQ_{\matA}\matQ_{\matC,1}\\
\matQ_{\matC,2}
\end{array}\right]\matR_{\matC}\,.
\]
This implies that 
\[
\left[\begin{array}{cc}
\matA & \sqrt{\lambda}\matI_{n}\end{array}\right]=\matR_{\matC}^{\conj}\left[\begin{array}{cc}
\matQ_{\matC,1}^{\conj}\matQ_{\matA}^{\conj} & \matQ_{\matC,2}^{\conj}\end{array}\right]
\]
is an LQ factorization of the augmented matrix. Hence, $\x^{\star}$
is equal to 
\[
\x^{\star}=\matQ_{\matA}\matQ_{\matC,1}(\matR_{\matC}^{\conj})^{-1}\b\,.
\]
An SVD factorization can be used as well: if $\matA^{\conj}=\matU\Sigma\matV^{\conj}$
is an SVD factorization, then $\x^{\star}=\matU(\Sigma^{2}+\lambda\matI_{n})^{-1}\Sigma\matV^{\conj}\b$.

In certain cases, it is possible to compute $\matK$ analytically.
As an example, consider again kernel ridge regression (Section~\ref{subsec:krr})
in the RKHS formulation (Eq.~(\ref{eq:krr-rkhs})). Due to the definition
of $\H_{k}$, the $ij$ entry of $\matK$ is 
\[
\matK_{ij}=\dotprod{k(\x_{i},\cdot)}{k(\x_{j},\cdot)}{\H_{k}}=k(\x_{i},\x_{j})\,,
\]
thus we can form $\matK+\lambda\matI_{n}$ and compute $\alpha=(\matK+\lambda\matI_{n})^{-1}\y$
using $O(n^{2}d+n^{3})$ FLOPs. The solution to Eq.~(\ref{eq:krr-rkhs})
is then 
\[
\f^{\star}(\x)=\sum_{i=1}^{n}\alpha_{i}k(\x_{i},\x)\,.
\]
and all computations are done in the standard model.

\section{Iterative Methods}

In this section we discuss solving SILR problems using iterative methods.
First, we consider using the classical approach of Krylov subspace
methods. We show how methods such as LSMR or LSQR can be rather naturally
generalized to quasimatrices. Next, we propose a novel method based
on stochastic optimization which requires considerably less function
operations, but depends on the ability to sample the quasimatrix.
We also show that this algorithm can, in certain cases, be applied
in the standard model (without FUNOPs).

\subsection{\label{subsec:Krylov_LSMR}Krylov Subspace Methods}

Krylov subspace methods are one of the most important classes of iterative
methods in numerical linear algebra. Many of the most widely used
iterative linear solvers are Krylov subspace methods. One important
benefit of Krylov subspace methods is that they only use matrix-vector
operations. For SILR, when working in the chebfun model, this implies
that each iteration does only $O(n)$ FUNOPs.

It was already observed by several authors that it is possible to
generalize Krylov subspace methods to operator equations. For example,
Olver suggested the use of GMRES with the differentiation operator
\cite{olver2009}, and the chebfun library implements GMRES for operator
equations. Continuous analogues of CG, GMRES and MINRES appear in
\cite{gilles2019continuous} in the context of differential operators.
In the same vein, we can adapt Krylov subspace algorithms for finite
linear least squares, such as LSQR~\cite{paige1982lsqr} and LSMR~\cite{fong2011lsmr},
to solve SILR problems. Here we describe the LSMR algorithm for quasimatrices.
The development is a rather straightforward generalization of the
matrix case, but we show it for concreteness.

\subsubsection{Golub-Kahan Bidiagonalization Process for Quasimatrices}

LSMR and LSQR are based on Golub-Kahan bidiagonalization \cite{golub1965calculating}.
The goal of Golub-Kahan bidiagonalization is to iteratively find a
decomposition $\matU^{*}\matA\matV=\matB$ where $\matU$ and $\matV$
have orthogonal columns, and $\matB$ is a bidiagonal matrix. When
$\matA$ is a quasimatrix over ${\cal H}$, one of $\matU$ and $\matV$
is a quasimatrix over ${\cal H}$ and the other one is a matrix. The
algorithm remains essentially unchanged, and is given in Algorithm~\ref{alg:bidiag1}
for a tall quasimatrix (for a wide quasimatrix the algorithm remains
the same under the corresponding changes of norms).

After $k$ steps of the algorithm, we have $\matA\matV_{k}=\matU_{k+1}\matB_{k}$
and $\matA^{*}\matU_{k+1}=\matV_{k+1}\matL_{k+1}^{*}$. Note that,
in the overdetermined case, where $\matA$ is a $\infty\times n$
quasimatrix, $\matV_{k}$ is a $n\times k$ matrix and $\matU_{k}$
is a $\infty\times k$ tall quasimatrix over $\H$. In the underdetermined
case, $\matU_{k}$ is a $n\times k$ matrix and $\matV_{k}$ is a
$\infty\times k$ tall quasimatrix over ${\cal H}$ such that $\matV_{k}^{*}\matV_{k}=\matU_{k}^{*}\matU_{k}=\matI_{k}$.
The algorithm also defines a $(k+1)\times k$ lower bidiagonal matrix
$\matB_{k}$.

\begin{algorithm}[t]
\begin{algorithmic}[1]

\STATE \textbf{Inputs:}

Tall $\infty\times n$ quasimatrix $\matA$ over ${\cal H}$, and
$\infty\times1$ quasimatrix $\b$.

\STATE \textbf{set:
\[
\beta_{1}=\HNorm{\b},\quad\u_{1}=\b/\beta_{1},\quad\alpha_{1}=\TNorm{\matA^{\T}\u_{1}},\quad\v_{1}=\matA^{\T}\u_{1}/\alpha_{1}\,.
\]
}

\STATE \textbf{for} $k=1,2,\ldots,$

\textbf{
\begin{align*}
\beta_{k+1}=\ONorm{\matA\v_{k}-\alpha_{k}\u_{k}}{\H}, & \u_{k+1}=(\matA v_{k}-\alpha_{k}\u_{k})/\beta_{k+1}\\
\alpha_{k+1}=\TNorm{\matA^{\conj}\u_{k+1}-\beta_{k+1}\v_{k}}, & \v_{k+1}=(\matA^{\conj}\u_{k+1}-\beta_{k+1}\v_{k})/\alpha_{k+1}\,.
\end{align*}
}

\STATE $\qquad$\textbf{Denote:
\begin{align*}
\matV_{k}=\left[\begin{array}{cccc}
\v_{1} & \mat v_{2} & \dots & \mat v_{k}\end{array}\right], & \matU_{k}=\left[\begin{array}{cccc}
\u_{1} & \mat u_{2} & \dots & \mat u_{k}\end{array}\right]\\
\matB_{k}=\left[\begin{array}{cccc}
\alpha_{1}\\
\beta_{2} & \alpha_{2}\\
 & \ddots & \ddots\\
 &  & \beta_{k} & \alpha_{k}\\
 &  &  & \beta_{k+1}
\end{array}\right], & \matL_{K}=\left[\begin{array}{cc}
\matB_{k} & \alpha_{k+1}\e_{k+1}\end{array}\right]\,.
\end{align*}
}

\end{algorithmic}

\caption{\label{alg:bidiag} (Tall) Quasimatrix Golub-Kahan Bidiagonalizaion.}
\end{algorithm}

\subsubsection{LSMR for Overdetermined SILR}

Recall that the solution $\x^{\star}$ of an overdetermined SILR problem
solves the normal equations $(\matA^{\conj}\matA+\lambda\matI_{n})\x^{\star}=\matA^{\conj}\b$.
LSMR is equivalent to applying MINRES to the normal equations, i.e.
in each iteration the minimizer of $\matA^{\conj}\b-(\matA^{\conj}\matA+\lambda\matI_{n})\x$
is found under the constraint that $\x$ belongs to the Krylov subspace.
Thus, defining $\rb_{k}=\b-\matA\x_{k}$ at iteration $k$, LSMR minimizes
$\TNorm{\matA^{\conj}\rb_{k}-\lambda\x_{k}}$ subject to $\x_{k}\in{\cal K}_{k}(\matA^{\conj}\matA,\b)$,
where ${\cal K}_{k}(\matA^{\conj}\matA,\b)\coloneqq\text{span}\left\{ \b,\matA^{\conj}\matA\b,\dots,(\matA^{\conj}\matA)^{k-1}\b\right\} $
is the order $k$-th order Krylov subspace generated by $\matA^{\conj}\matA$
and $\b$.

To find $x_{k}$, LSMR uses the Golub-Kahan bidiagonalization. After
$k$ iterations we have $\matA\matV_{k}=\matU_{k+1}\matB_{k}$ and
$\matA^{*}\matU_{k+1}=\matV_{k+1}\matL_{k+1}^{*}$, where $\matV_{k}$
is a $n\times k$ matrix and $\matU_{k}$ is a $\infty\times k$ tall
quasimatrix over $\H$. Thus,
\begin{align*}
\matA^{*}(\mat A\matV_{k}) & =(\matA^{*}\matU_{k+1})\matB_{k}=\matV_{k+1}\matL_{k+1}^{\T}\matB_{k}=\matV_{k+1}\left[\begin{array}{c}
\matB_{k}^{\T}\matB_{k}\\
\alpha_{k+1}\beta_{k+1}\e_{k+1}^{\T}
\end{array}\right]\\
\matA^{*}\b & =\beta_{1}\matA^{\T}\u_{1}=\alpha_{1}\beta_{1}\matV_{k+1}\e_{1}\,.
\end{align*}
(These equations are the same as in the matrix case: the quasimatrix
algebra defined in Section~\ref{sec:quasimatrices} allows us to
write essentially the same derivations). Since $\x_{k}$ is in the
Krylov subspace, we can write $\x_{k}=\matV_{k}\y_{k}$ for some $\y_{k}\in\mathbb{\C}^{k}$.
Thus, we can write
\begin{align*}
\XNormS{\matA^{*}(\matA\x_{k}-\b)}2+\lambda\XNormS{\x_{k}}2 & =\XNormS{\matA^{*}(\matA\matV_{k}\y_{k}-\b)}2+\lambda\XNormS{\matV_{k}\y_{k}}2\\
 & =\left\Vert \matV_{k+1}\left(\left[\begin{array}{c}
\matB_{k}^{\T}\matB_{k}\\
\alpha_{k+1}\e_{k+1}^{\T}
\end{array}\right]\y_{k}-\alpha_{1}\beta_{1}\e_{1}\right)\right\Vert _{2}^{2}+\lambda\TNormS{\y_{k}}\,.
\end{align*}
So, finding $\y_{k}$ and $\x_{k}$ has been reduced to the solution
of a finite linear least squares problem. An algorithm for finding
these vectors efficiently and iteratively is described in \cite{fong2011lsmr}.

\textbf{Stopping criteria: }the Golub-Kahan process terminates whenever
$\alpha_{k+1}=0$ or $\beta_{k+1}=0$, which implies that the last
equation is zero. However, we can use one of the stopping criteria
originally presented for the LSQR algorithm, involving the predetermined
parameters $\text{ATOL},\text{BTOL}$ and $\text{CONLIM}$:
\begin{align*}
S_{1}: & \text{Stop if}\quad\sqrt{\XNormS{\rb_{k}}{\H}+\lambda\TNormS{\x_{k}}}\leq\text{BTOL}\ONorm{\b}{\H}+\text{ATOL}\sqrt{\sigma_{\max}(\matB_{k})^{2}+\lambda}\ONorm{\x_{k}}2\\
S_{2}: & \text{Stop if}\quad\ONorm{\matA^{\conj}\rb_{k}-\lambda\x_{k}}2\leq\text{ATOL}\sqrt{\sigma_{\max}(\matB_{k})^{2}+\lambda}\sqrt{\XNormS{\matA\x_{k}-\b}{\H}+\lambda}\\
S_{3}: & \text{Stop if}\quad\sqrt{\frac{\sigma_{\max}(\matB_{k})^{2}+\lambda}{\sigma_{\min}(\matB_{k})^{2}+\lambda}}\geq\text{CONLIM}\,.
\end{align*}
The motivation for these stopping rules is the fact that $\sigma_{\max}(\matB_{k})$
and $\sigma_{\min}(\matB_{k})$ provide estimates for $\sigma_{\max}(\matA)$
and $\sigma_{\min}(\matA)$. This follows from the fact that $\matB_{k}^{\T}\matB_{k}=\matV_{k}^{*}\matA^{*}\matA\matV_{k}$.
See \cite{fong2011lsmr} for more details.

\textbf{Complexity:} when compared to the matrix version of LSMR,
the quasimatrix version trades each matrix-vector product with $n$
FUNOPs. Thus, in terms of FUNOPs, $2n$ FUNOPs are required per iteration.
Since the number of iterations is $O(\sqrt{\kappa(\matA^{\conj}\matA+\lambda\matI_{n}}),$
overall complexity is $O(n\sqrt{\kappa(\matA^{\conj}\matA+\lambda\matI_{n}})$
FUNOPs.

\subsubsection{Numerical Example}

We illustrate the use of LSMR for the problem of approximating the
Runge function $f(x)=1/(1+25x^{2})$ on $[-1,1]$ using a polynomial
of degree $300$. We can write the approximation as the solution of
an overdetermined SILR problem where $\matA$ is any quasimatrix whose
columns span the space of degree 300 polynomials. However, we want
$\matA$ to be reasonably well conditioned so that LSMR will converge
quickly, so we use Chebyshev polynomials as the columns of $\matA$
(we empirically observed that when the columns are the Chebyshev polynomials,
$\matA$ is well-conditioned, though we are unaware of any analytical
result showing this; note that taking the normalized Legendre polynomials
instead would have resulted in an orthogonal $\matA$, which would
have made for an uninteresting numerical example). Thus, we solve
the SILR problem where 
\[
\matA=\left[\begin{array}{cccc}
T_{0} & T_{1} & \dots & T_{299}\end{array}\right],\quad\b=\left[\frac{1}{1+25x^{2}}\right]\,.
\]
In the above, $T_{j}$ is the $j$-th Chebyshev polynomial. We use
$\lambda=0$ (no regularization) and parameters $\text{ATOL}=\text{BTOL}=10^{-7}$.
Convergence plots are shown in Figure~\ref{fig:LSMR_errors}.

\begin{figure}[H]
\begin{centering}
\begin{tabular}{c}
\includegraphics[width=0.45\textwidth]{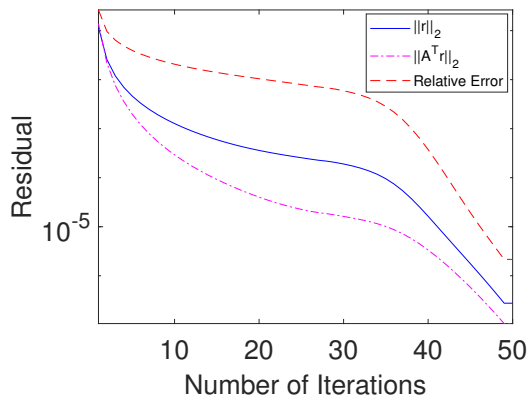}\tabularnewline
\end{tabular}
\par\end{centering}
\caption{\label{fig:LSMR_errors} Numerical illustration: using LSMR to solve
a SILR related to approximating the Runge function using a polynomial.}
\end{figure}

\subsection{\label{subsec:SVRG_SILR}Stochastic Variance Reduced Gradient (SVRG)}

Recent literature on convex optimization advocated the use of stochastic
methods. Even for the specialized cases of solving linear equations
or linear least squares, such methods have been shown to be beneficial
\cite{LS13,gonen16svrg}. In this section, we propose a method for
solving SILR problems using Stochastic Variance Reduced Gradient (SVRG)~\cite{johnson2013accelerating,xiao2014proximal}.
SVRG is a stochastic optimization method for minimizing objective
functions that have finite sum structure, i.e. of the form
\[
f(\x)=\frac{1}{n}\sum_{i=1}^{n}f_{i}(\x)\,.
\]
For such objective functions, we can compute stochastic gradients
by sampling an index of the sum. SVRG's main benefit comes from the
fact that it combines such stochastic gradients with a small amount
of full gradients (i.e. exact gradients of $f$) . For strongly convex
functions, the number of such full gradients we need to compute is
independent of the condition number (however, the number of stochastic
gradients does depend on the average condition number).

For SILR, full gradients correspond to products of a quasimatrix $\matA$
with a vector or function, but this is the only operation that assumes
the chebfun model and requires FUNOPs. Thus, by using SVRG we remove
the condition number dependence for the number of FUNOPs required
for convergence, which is a major improvement over Krylov methods.
However, stochastic gradients in SVRG correspond to sampling objective
functions, and for SILR this translates to sampling a row from a tall
quasimatrix or a column from a wide quasimatrix. Thus, the quasimatrix
must be a quasimatrix over a $L_{2}$ space, and must have a coordinate
representation.

One obstacle in applying SVRG to SILR problems is that such problems
cannot be written as a finite sum, but rather can be written as an
integral of simpler functions, i.e. 
\[
f(\x)=\int_{\Omega}f_{\veta}(\x)d\mu(\veta)\,.
\]
We generalize SVRG and its analysis to handle such functions. The
generalization might be of independent interest, and appears in Appendix~\ref{sec:SVRG_int}.

\subsubsection{Overdetermined SILR}

Consider the overdetermined SILR problem (Eq.~(\ref{eq:over-silr}))
where the quasimatrix $\matA$ is over ${\cal H}=L_{2}(\Omega,d\mu)$
for some index set $\Omega$. We further assume we have a coordinate
representation $\z_{\matA}:\Omega\to\mathbb{\mathbb{R}}^{n}$ for
$\matA$ and $\z_{\b}:\Omega\to\mathbb{\mathbb{R}}$ for $\b$. We
further assume there exists a $M$ such that for every $\veta\in\Omega$
we have $\TNormS{\z_{\matA}(\veta)}\leq M$. We can write the objective
function in Eq. (\ref{eq:over-silr}) as an integral:
\begin{align*}
\frac{1}{2}\XNormS{\matA\x-\b}{L_{2}(\Omega,d\mu)}+\frac{\lambda}{2}\TNormS{\x} & =\frac{1}{2}\XNormS{\sum_{i=1}^{n}x_{i}\overline{\z_{\matA}(\cdot)_{i}}-\b}{L_{2}(\Omega,d\mu)}+\frac{\lambda}{2}\TNormS{\x}\\
 & =\frac{1}{2}\XNormS{\z_{\matA}(\cdot)^{*}\x-\b}{L_{2}(\Omega,d\mu)}+\frac{\lambda}{2}\TNormS{\x}\\
 & =\frac{1}{2}\int_{\Omega}\left(\z_{\matA}(\veta)^{*}\x-\z_{\b}(\veta)\right)^{2}+\lambda\TNormS{\x}\,d\mu(\veta)\\
 & =\int_{\Omega}f_{\veta}(\x)d\mu(\veta)
\end{align*}
where 
\[
f_{\veta}(\x)\coloneqq\frac{1}{2}\left(\z_{\matA}(\veta)^{*}\x-\z_{\b}(\veta)\right)^{2}+\frac{\lambda}{2}\TNormS{\x}\,.
\]

We can now apply the aforementioned variant of SVRG \cite{johnson2013accelerating}
(see Appendix~\ref{sec:SVRG_int}), which is adapted for objective
integrable functions. To do so, the following assumptions need to
be verified:
\begin{assumption}
\label{assu:Lip-eta}For all $\veta\in\Omega$, $\nabla f_{\veta}(\x)$
is Lipschitz continuous, i.e., there exists $L_{\veta}>0$ such that
for all $\x,\y\in\mathbb{R}^{n}$ 
\[
\Vert\nabla f_{\veta}(\x)-\nabla f_{\veta}(\y)\Vert\leq L_{\veta}\Vert\x-\y\Vert\,.
\]
\end{assumption}

\begin{assumption}
\label{assu:strong-convex}Suppose that $f(\x)$ is strongly convex,
i.e., there exist $\gamma>0$ such that for all $\x,\y\in\mathbb{R}^{n}$
\[
f(\x)-f(\y)\geq\frac{\gamma}{2}\Vert\x-\y\Vert_{2}^{2}+\nabla f(\boldsymbol{y})^{\T}(\x-\y)\,.
\]
\end{assumption}

\begin{assumption}
\label{assu:grad-int}The equality $\nabla f(\x)=\int_{\Omega}\nabla f_{\veta}(\x)d\mu(\veta)$
hold.
\end{assumption}

\begin{assumption}
\label{assu:finite-L_eta}$L_{\sup}\coloneqq\sup_{\veta\in\Omega}L_{\veta}<\infty$.
\end{assumption}

We begin by writing
\[
f(\x)\coloneqq\frac{1}{2}\XNormS{\matA\x-\b}{L_{2}(\Omega,d\mu)}+\frac{\lambda}{2}\TNormS{\x}=\frac{1}{2}\x^{\T}(\matK+\lambda\matI_{n})\x-\x^{\T}\matA^{*}\b+\frac{1}{2}\XNormS{\b}{L_{2}(\Omega,d\mu)}
\]
where $\matK=\matA^{\conj}\matA\in\mathbb{R}^{n\times n}$. Thus,
\[
\nabla f(\x)=\matA^{\conj}(\matA\x-\b)+\lambda\x\,.
\]
It can be seen that Assumption~\ref{assu:strong-convex} holds with
$\gamma=\lambda+\lambda_{\min}(\matK)$. We also have 
\[
\nabla f_{\veta}(\x)=\z_{\matA}(\veta)\left(\z_{\matA}(\veta)^{*}\x-\z_{\b}(\veta)\right)+\lambda\x
\]
with 
\begin{align*}
\int_{\Omega}\nabla f_{\veta}(\x)d\mu(\veta) & =\int_{\Omega}\z_{\matA}(\veta)\left(\z_{\matA}(\veta)^{*}\x-\z_{\b}(\eta)\right)+\lambda\x d\mu(\veta)\\
 & =\left(\int_{\Omega}\z_{\matA}(\veta)\z_{\matA}(\veta)^{*}d\mu(\veta)\right)\x-\int_{\Omega}\z_{\matA}(\veta)\z_{\b}(\veta)d\mu(\veta)+\lambda\x\\
 & =\matK\x-\matA^{\conj}\b+\lambda\x=\nabla f(\x)\,.
\end{align*}
so Assumption~\ref{assu:grad-int} holds as well. Note that for every
$\veta\in\X$ 
\begin{align*}
\TNorm{\nabla f_{\veta}(\x)-\nabla f_{\veta}(\y)} & =\TNorm{(\z_{\matA}(\veta)\z_{\matA}(\veta)^{*}+\lambda\matI_{n})(\x-\y)}\\
 & \leq\left(\TNormS{\z_{\matA}(\veta)}+\lambda\right)\TNorm{\x-\y}
\end{align*}
so each $\nabla f_{\veta}$ is Lipschitz continuous with Lipschitz
constant $L_{\veta}=\TNormS{\z_{\matA}(\veta)}+\lambda$. Thus, Assumptions
\ref{assu:Lip-eta} and \ref{assu:finite-L_eta} hold with $L_{\sup}=M+\lambda$.

Therefore, according to Theorem \ref{thm:converge_proof} (in Appendix~\ref{sec:SVRG_int}),
if we set 
\[
m=50\cdot\kappa,\;\kappa=\frac{M+\lambda}{\gamma^{2}+\lambda},\;\;\alpha=\frac{\theta}{M+\lambda},\;0<\theta<\frac{1}{4}
\]
where $\gamma$ is any lower bound on $\sigma_{\min}(\matA)$ (if
$\lambda>0$ we can take $\gamma=0$), then taking $\theta=1/5$ and
assuming we start with $\x=0$ yields
\[
\mathbb{E}\left[f(\tilde{\x}_{s})\right]-f(\x^{\star})\leq\left(\frac{5}{6}\right)^{s}\left(\frac{1}{2}\XNormS{\b}{L_{2}(\Omega,d\mu)}-f(\x^{\star})\right)\,.
\]

Overall, to reduce (in expectation) by a factor of $\epsilon$ we
need to do $O(\log(1/\epsilon))$ outer iterations, each requiring
$2n$ FUNOPs. Each outer iteration requires $O\left(\kappa\right)$
inner iterations, each requiring $O(n+T)$ FLOPS where $T$ is the
cost of computing $\z_{\matA}(\veta)$ and $\z_{\b}(\veta)$ for a
given $\veta$, so in total we need $O\left((n+T)\cdot\kappa\cdot\log(1/\epsilon)\right)$
FLOPs. We see that in contrast with Krylov subspace methods, the number
of FUNOPs does \emph{not} depend on the condition number. The proposed
algorithm is summarized in Algorithm~\ref{alg:svrg_int-tall}.

\begin{algorithm}[t]
\begin{algorithmic}[1]

\STATE \textbf{Inputs:}

- Tall $\infty\times n$ quasimatrix $\matA$ over $L_{2}(\Omega,d\mu)$,
along with coordinate representation $\z_{\matA}:\Omega\to\mathbb{C}^{n}$\textbf{
}\\
- $\b$ with coordinate representation $\z_{\b}:\Omega\to\C^{n}$,
$\lambda>0$\\
- $M$ such that for all $\veta\in\Omega$ we have $\TNormS{\z_{\matA}(\veta)}\leq M$
\\
- $\gamma$ such that $0\leq\gamma\leq\sigma_{\min}(\matA)$\\
- Accuracy parameter $\epsilon>0$

\STATE $\tilde{\x}_{0}\gets0$

\STATE $\alpha\gets\frac{1}{5(M+\lambda)}$, $m\gets\frac{50(M+\lambda)}{\gamma^{2}+\lambda}$

\STATE $s_{\max}\gets\left(\log\left(\frac{6}{5}\right)\right)^{-1}\cdot\log\left(\frac{\XNormS{\b}{L_{2}(d\mu)}}{2\epsilon}\right)$

\STATE \textbf{Iterate: }for $s=1,2,\ldots,s_{\max}$

\STATE $\qquad$$\tilde{\x}=\tilde{\x}_{s-1}$

\STATE $\qquad$$\tilde{\g}=\matA^{\conj}(\matA\tilde{\x}-\b)+\lambda\tilde{\x}$

\STATE $\qquad$$\x_{0}=\tilde{\x}$

\STATE $\qquad$\textbf{Iterate: }for $k=1,2,\ldots,m$

\STATE $\qquad\qquad$sample $\veta_{k}$ from the distribution $\mu$

\STATE $\qquad\qquad$$\x_{k}=\x_{k-1}-\alpha\left(\z_{\matA}(\veta_{k})\z_{\matA}(\veta_{k})^{*}\left(\x_{k-1}-\tilde{\x}\right)+\lambda\left(\x_{k-1}-\tilde{\x}\right)+\tilde{\g}\right)$

\STATE \textbf{$\qquad$end}

\STATE \textbf{$\qquad$option I: }set $\tilde{\x}_{s}=\x_{m}$

\STATE \textbf{$\qquad$option II: }set $\tilde{\x}_{s}=\frac{1}{m}\sum_{k=1}^{m}\x_{k}$

\STATE \textbf{end}

\RETURN $\tilde{\x}_{s_{\max}}$

\end{algorithmic}

\caption{\label{alg:svrg_int-tall}SVRG for overdetermined SILR.}
\end{algorithm}

\subsubsection{Underdetermined SILR}

We now consider the case that $\matA$ is wide $n\times\infty$ quasimatrix
over $L_{2}(\Omega,d\mu)$ of full rank, and $\b\in\C^{n}$. As explained
in subsection~\ref{subsec:direct-under}, the optimal solution $\x^{\star}$
has the form $\x^{\star}=\matA^{\conj}\y^{\star}$ for $\y^{\star}\in\C^{n}$.
In addition, we have $\y^{\star}=(\matK+\lambda\matI_{n})^{-1}\b$
where $\matK=\matA\matA^{\conj}$. Hence, 
\[
\y^{\star}=\arg\min_{\y\in\C^{n}}\frac{1}{2}\y^{\conj}(\matK+\lambda\matI_{n})\y-\y^{\conj}\b\,.
\]
Thus, we can find approximate solutions to the regression problem
by optimizing
\[
f(\y)\coloneqq\frac{1}{2}\y^{\conj}(\matK+\lambda\matI_{n})\y-\y^{\conj}\b
\]
and returning $\tilde{\x}=\matA^{*}\tilde{\y}$ for the $\tilde{\y}$
found by the optimization process. Note that if we find a $\tilde{\y}$
such that $f(\tilde{\y})\leq f(\y^{\star})+\epsilon$ then for $\tilde{\x}=\matA^{\conj}\tilde{\y}$
we have 
\[
\XNormS{\tilde{\x}-\x^{\star}}{L_{2}(\Omega,d\mu)}\leq\frac{2\lambda_{\max}(\matK)}{\lambda_{\min}(\matK)+\lambda}\cdot\epsilon\,.
\]

We can again use SVRG (with the specific variant described in Appendix~\ref{sec:SVRG_int})
to minimize $f(\y)$. Since the assumptions are the same as in the
previous section, and the developments are almost identical, we do
not repeat them. The algorithm is almost identical to Algorithm~\ref{alg:svrg_int-tall},
with  two small differences: the equation for $s_{\max}$ is replaced
by $(\log(6/5))^{-1}\cdot\log(\TNormS{\b}+\TNormS{\b}/(\epsilon\sqrt{\lambda_{\min}(\matK)+\lambda}))$,
and $\tilde{\g}$ is  $\matA(\matA^{\conj}\tilde{\y})+\lambda\tilde{\y}-\b.$

\subsubsection{SVRG for Kernel Ridge Regression}

Recall that KRR can be recasted as an underdetermined SILR problem
(subsection \ref{subsec:krr}). We can use the algorithm from the
previous subsection to solve this SILR problem. However, since we
can compute $\matK$ via the kernel function without assuming the
chebfun model, we can avoid performing FUNOPs when computing $\tilde{\g}.$
That is, we can apply SVRG under the standard model. For this case,
the assumptions hold with $M=d$.

We illustrate the performance of this algorithm on a small scale experiment.
The goal is to learn a one dimensional dataset generated by noisily
sampling the function $f^{\star}(x)=\sin(6x)+\sin(60e^{x})$, i.e.
$y_{i}=f^{\star}(x_{i})+\epsilon_{i}$ with $\epsilon_{i}\overset{i.i.d}{\sim}\mathcal{N}(0,0.3^{2})$.
The training set consists of 400 equispaced examples on $[-1,1]$,
and we use the Gaussian kernel. The goal was to reach error $\epsilon=10^{-2}$.
The experiment was run with fixed step size of $\alpha=10^{-4}<1/2L_{\sup}$
. We varied both the value of $m$ and $s.$ Results are reported
in Figure \ref{fig:SVRG-Test-Error}.

\begin{figure}[t]
\begin{centering}
\begin{tabular}{ccc}
\includegraphics[width=0.4\textwidth]{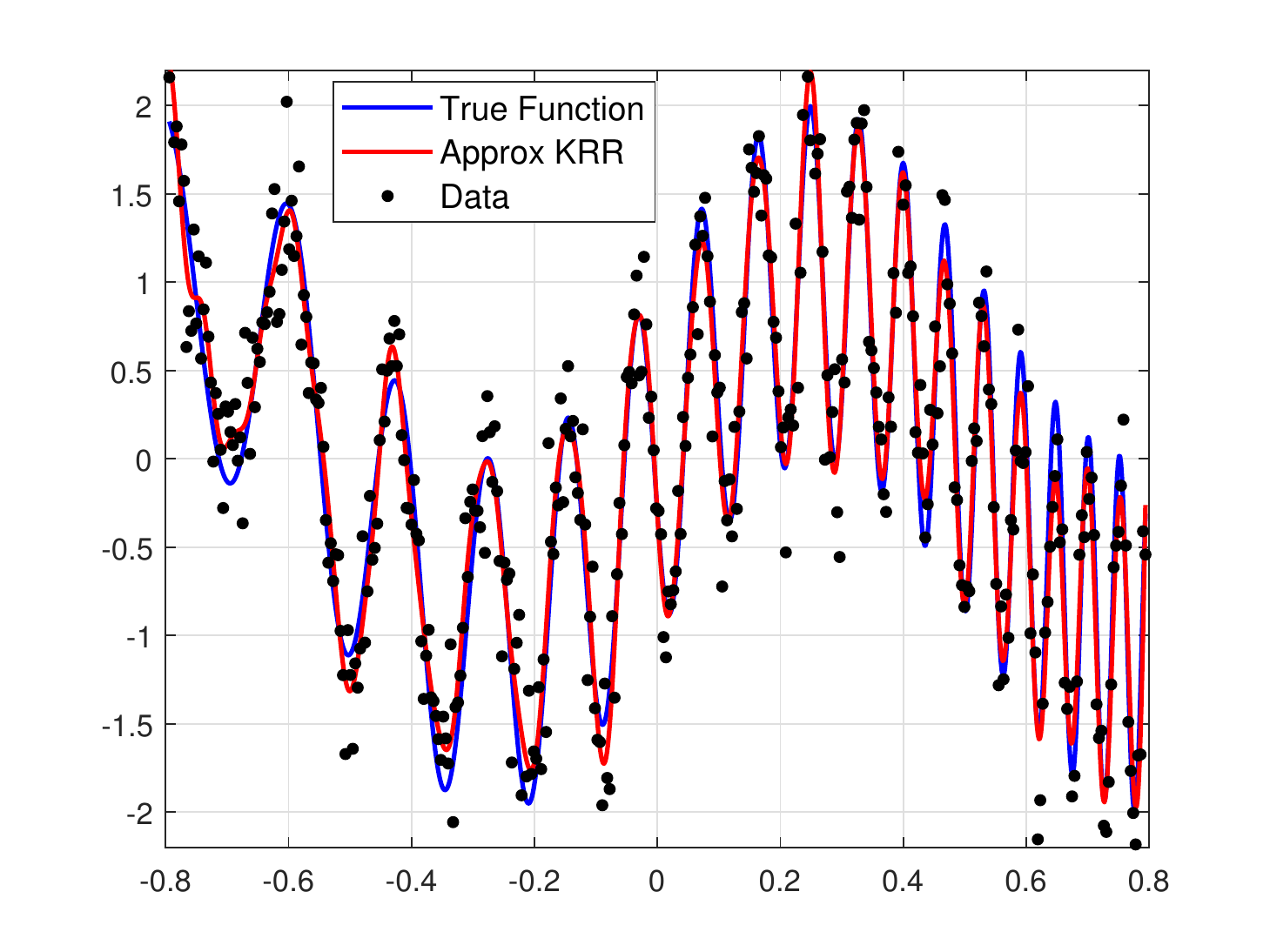} & ~ & \includegraphics[width=0.4\textwidth]{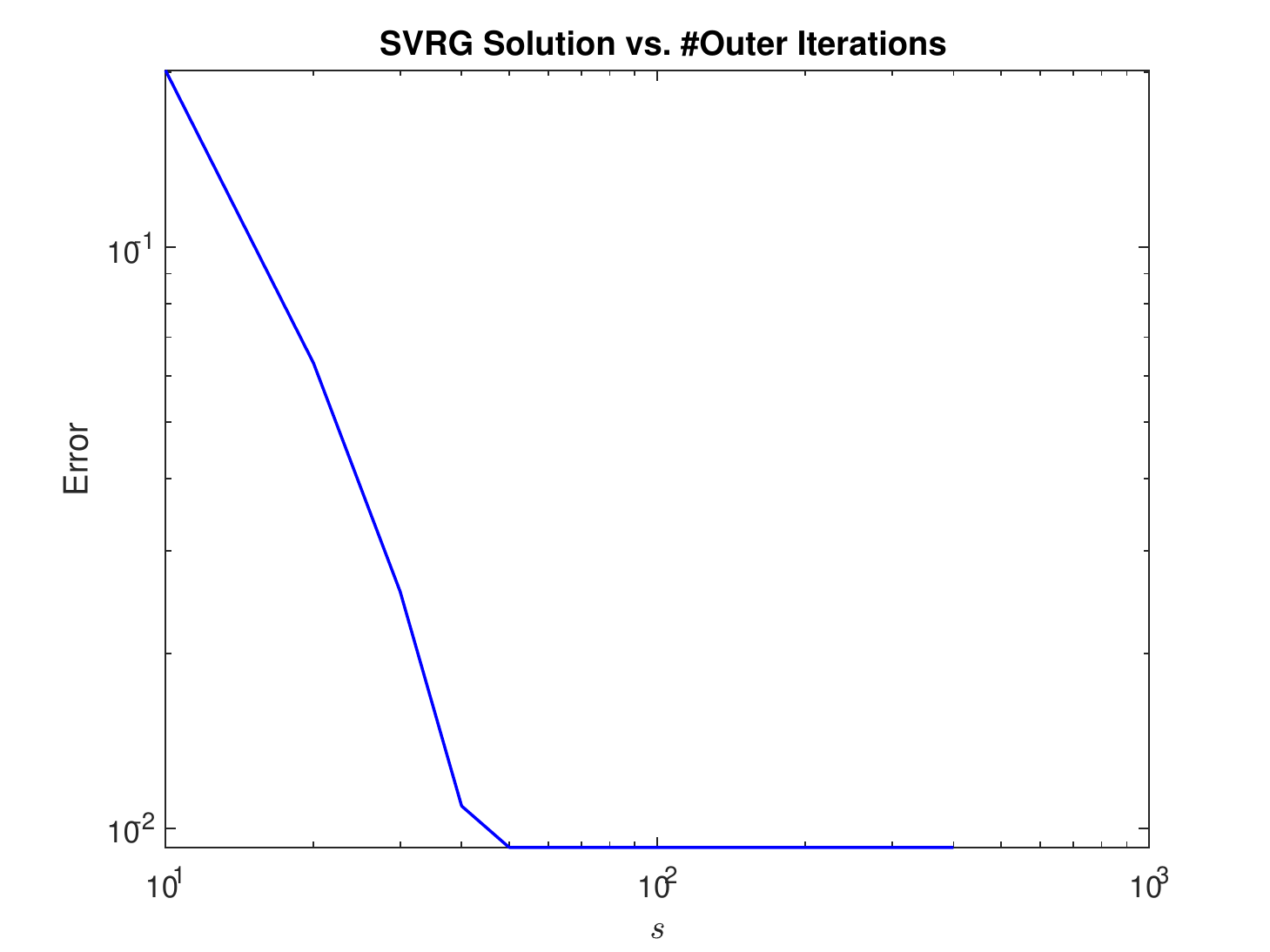}\tabularnewline
\includegraphics[width=0.4\textwidth]{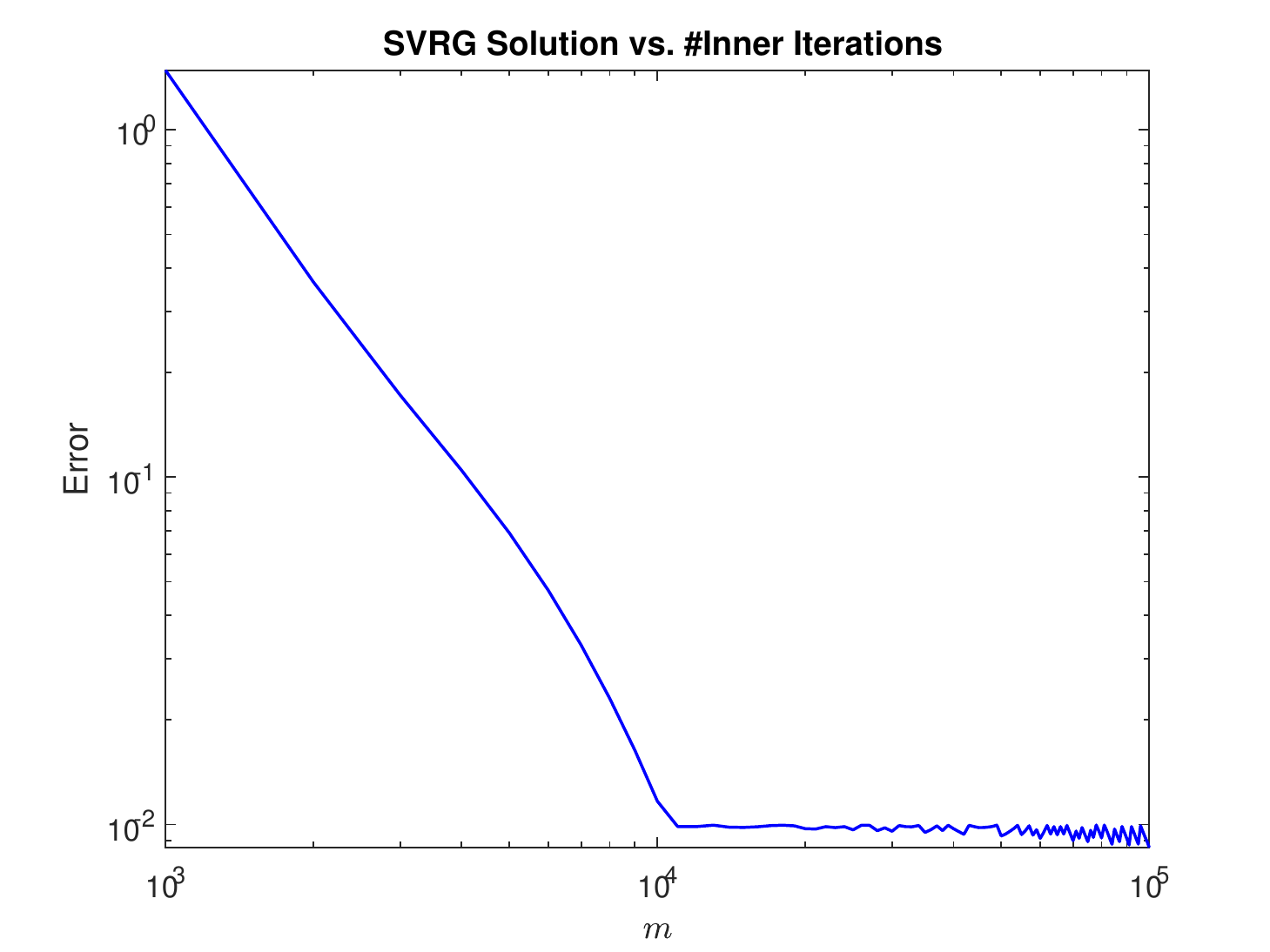} &  & \includegraphics[width=0.4\textwidth]{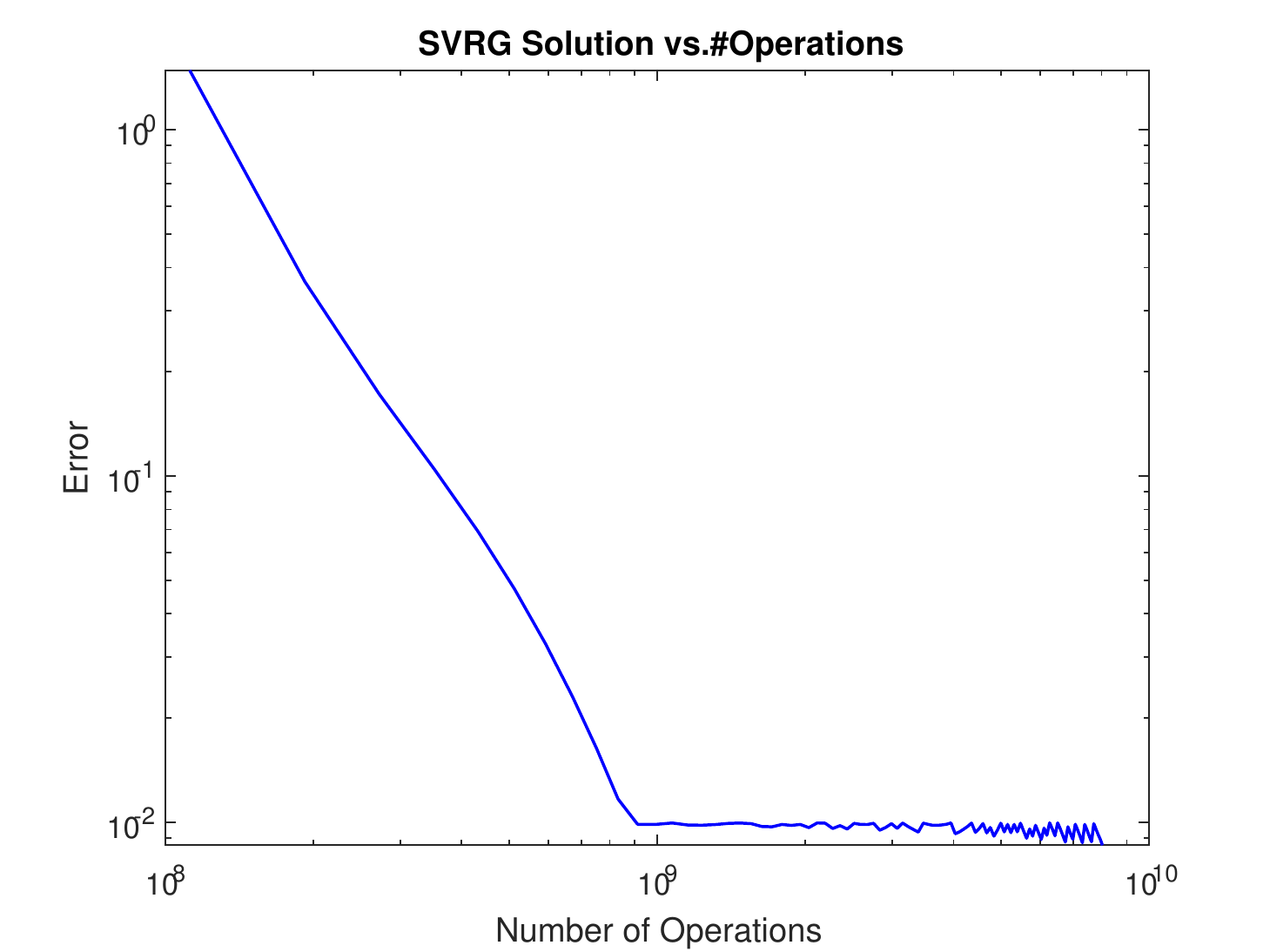}\tabularnewline
\end{tabular}
\par\end{centering}
\caption{\label{fig:SVRG-Test-Error}Experiment with SVRG for KRR. The target
function and the samples are shown in the top left graph. The top
right and bottom left graph show the test error when varying $s$
or $m$ (respectively). In the bottom right we show the test error
as a function of the number of operations.}
\end{figure}

\section{\label{sec:sampling}Converting SILR to Finite Linear Least Squares
via Sampling}

All the previous algorithms we presented for solving SILR problems
either assumed the chebfun model, or relied on the ability to compute
the Gram matrix using an analytic formula (e.g. kernel ridge regression).
In practice, such formulas are not always available, and the chebfun
model is implemented only in software, and even then only when the
columns/rows of the quasimatrix are univariate functions\footnote{While chebfun does support bivariate functions, it does not seem to
support quasimatrices of bivariate functions.}. Thus, a different technique is needed in order to solve SILR problems
that violate these constraints. One natural approach for approximately
solving a SILR problem is to discretize the infinite dimension via
sampling.

For brevity, let us focus on overdetermined SILR (Eq.~(\ref{eq:over-silr})).
In order to discuss sampling, we need a coordinate representation
of $\matA$. Thus, we assume that ${\cal H}=L_{2}(\Omega,d\mu)$ for
some measurable index set $\Omega\subseteq\R^{d}$ and that we have
a coordinate representation $\z_{\matA}:\Omega\to\mathbb{C}^{n}$
for $\matA$ and $\z_{\b}:\Omega\to\mathbb{C}$ for $\b$. A generic
approach is as follows. We first select $s$ coordinates $\veta_{1},\dots,\veta_{s}\in\Omega$,
and associated weights $w_{1},\dots,w_{s}\in\R$. We then form the
row sampled matrix $\matA_{\veta}$ and row sampled vector $\b_{\veta}$
as follows:
\[
\matA_{\veta}=\left[\begin{array}{c}
w_{1}\z_{\matA}(\veta_{1})^{\conj}\\
w_{2}\z_{\matA}(\veta_{2})^{\conj}\\
\vdots\\
w_{s}\z(\veta_{s})^{\conj}
\end{array}\right],\quad\quad\b_{\veta}=\left[\begin{array}{c}
w_{1}\overline{\z_{\b}(\veta_{1})}\\
w_{2}\overline{\z_{\b}(\veta_{2})}\\
\vdots\\
w_{s}\overline{\z_{\b}(\veta_{s})}
\end{array}\right]\,.
\]
We now solve the sampled problem (which is a finite linear least squares
problem):
\[
\tilde{\x}=\arg\min_{\x\in\C^{n}}\TNormS{\matA_{\veta}\x-\b_{\veta}}+\lambda\TNormS{\x}\,.
\]
Solving this sampled problem can be considered as an approximation
to the SILR problem, and as explained in Section~\ref{sec:SILR}
is the scheme used in least squares approximation of functions \cite{cohen2013stability,cohen2017optimal}
and random Fourier features \cite{rahimi2008random}.

To make the method concrete we need to address a couple of related
questions. Given $\veta_{1},\dots,\veta_{s}$, can we relate $\tilde{\x}$
to $\x^{\star}$? How can we select $\veta_{1},\dots,\veta_{s}\in\Omega$
and $w_{1},\dots,w_{s}\in\R$ so that $\tilde{\x}$ is a good enough
approximate solution? Similar questions have been asked, and answered,
for finite linear least squares~\cite{Woodruff14,avron2017faster,ACW17},
and various structural conditions have been suggested. The following
result is similar to ones that appear in the literature on sampling
finite linear least squares problems.
\begin{prop}
\label{prop:part_1}Consider the overdetermined SILR 
\[
\min_{\x\in\C^{n}}\XNormS{\matA\x-\b}{{\cal H}}+\lambda\TNormS{\x}
\]
along with a full rank tall quasimatrix $\matA$ over ${\cal H}$
with $n$ columns and $\lambda\geq0$. Assume that $\left(\XNormS{\matA\x^{\star}-\b}{{\cal H}}+\lambda\TNormS{\x^{\star}}\right)/2\geq\lambda$.
Also assume that we are given a matrix $\matA_{\veta}\in\mathbb{R}^{s\times n}$
and a vector $\b_{\veta}\in\mathbb{R}^{s}$ such that 
\begin{equation}
(1-\epsilon)\left(\XNormS{\matA\x-\b}{{\cal H}}+\lambda\TNormS{\x}+\lambda\right)\leq\TNormS{\matA_{\veta}\x-\b_{\veta}}+\lambda\TNormS{\x}+\lambda\leq(1+\epsilon)\left(\XNormS{\matA\x-\b}{{\cal H}}+\lambda\TNormS{\x}+\lambda\right)\label{eq:over_spectral}
\end{equation}
 for all $\x\in\C^{n}$. Then, 
\[
\XNormS{\matA\tilde{\x}-\b}{{\cal H}}+\lambda\TNormS{\tilde{\x}}\leq\frac{1+2\epsilon}{1-\epsilon}\left(\XNormS{\matA\x^{\star}-\b}{{\cal H}}+\lambda\TNormS{\x^{\star}}\right)\,.
\]
\end{prop}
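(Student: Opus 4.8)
This is the semi-infinite analogue of the standard ``subspace embedding implies approximate solution'' argument for finite least squares. The plan is to abbreviate $F(\x) \coloneqq \XNormS{\matA\x-\b}{{\cal H}}+\lambda\TNormS{\x}$ for the true objective and $\tilde{F}(\x) \coloneqq \TNormS{\matA_{\veta}\x-\b_{\veta}}+\lambda\TNormS{\x}$ for the sampled objective, so that $\x^{\star}=\arg\min_{\x}F(\x)$, $\tilde{\x}=\arg\min_{\x}\tilde{F}(\x)$, and Eq.~(\ref{eq:over_spectral}) reads $(1-\epsilon)(F(\x)+\lambda)\le\tilde{F}(\x)+\lambda\le(1+\epsilon)(F(\x)+\lambda)$ for all $\x\in\C^{n}$. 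The whole proof is then a short chain of inequalities, and the only place the hypothesis $(F(\x^{\star})+\lambda)/2\ge\lambda$ (equivalently $\lambda\le F(\x^{\star})/2$) is used is in the final cleanup step.

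\textbf{Key steps.} First I would apply the lower bound of Eq.~(\ref{eq:over_spectral}) at $\x=\tilde{\x}$ to get $F(\tilde{\x})+\lambda\le\frac{1}{1-\epsilon}\bigl(\tilde{F}(\tilde{\x})+\lambda\bigr)$. Next, use optimality of $\tilde{\x}$ for the sampled problem, $\tilde{F}(\tilde{\x})\le\tilde{F}(\x^{\star})$, and then the upper bound of Eq.~(\ref{eq:over_spectral}) at $\x=\x^{\star}$ to obtain $\tilde{F}(\x^{\star})+\lambda\le(1+\epsilon)\bigl(F(\x^{\star})+\lambda\bigr)$. Chaining these three facts yields
\[
F(\tilde{\x})+\lambda\le\frac{1+\epsilon}{1-\epsilon}\bigl(F(\x^{\star})+\lambda\bigr),
\]
hence $F(\tilde{\x})\le\frac{1+\epsilon}{1-\epsilon}F(\x^{\star})+\frac{2\epsilon}{1-\epsilon}\lambda$. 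Finally, substitute $\lambda\le F(\x^{\star})/2$ into the trailing term: $\frac{2\epsilon}{1-\epsilon}\lambda\le\frac{\epsilon}{1-\epsilon}F(\x^{\star})$, so $F(\tilde{\x})\le\frac{1+\epsilon}{1-\epsilon}F(\x^{\star})+\frac{\epsilon}{1-\epsilon}F(\x^{\star})=\frac{1+2\epsilon}{1-\epsilon}F(\x^{\star})$, which is exactly the claimed bound after unfolding the definition of $F$.

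\textbf{Main obstacle.} There is essentially no hard step here; the argument is entirely elementary once the embedding inequality is available. The one point requiring care is the bookkeeping of the additive $+\lambda$ shifts: the embedding is stated for $F(\x)+\lambda$ rather than $F(\x)$, which is what forces the extra $\frac{2\epsilon}{1-\epsilon}\lambda$ term and hence the need for the assumption relating $\lambda$ to $F(\x^{\star})$ in order to absorb it cleanly into the stated multiplicative constant. I would also note in passing that the inequalities only use $\lambda\ge0$ and the full-rank hypothesis (to guarantee $\x^{\star}$ and $\tilde{\x}$ are well defined), so nothing about the infinite dimension of $\matA$ intervenes beyond the fact that $\|\cdot\|_{\H}$ is a genuine norm.
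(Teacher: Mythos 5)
Your proposal is correct and matches the paper's proof essentially line for line: the same three-step chain (lower embedding bound at $\tilde{\x}$, optimality of $\tilde{\x}$ for the sampled problem, upper embedding bound at $\x^{\star}$), followed by absorbing the residual $\frac{2\epsilon}{1-\epsilon}\lambda$ term using the hypothesis $\lambda\leq\left(\XNormS{\matA\x^{\star}-\b}{{\cal H}}+\lambda\TNormS{\x^{\star}}\right)/2$. No differences worth noting.
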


\begin{proof}
We have
\begin{align*}
\XNormS{\matA\tilde{\x}-\b}{{\cal H}}+\lambda\TNormS{\tilde{\x}} & \leq\frac{1}{1-\epsilon}\left(\TNormS{\matA_{\veta}\tilde{\x}-\b_{\veta}}+\lambda\TNormS{\tilde{\x}}+\lambda\right)-\lambda\\
 & \leq\frac{1}{1-\epsilon}\left(\TNormS{\matA_{\veta}\x^{\star}-\b_{\veta}}+\lambda\TNormS{\x^{\star}}+\lambda\right)-\lambda\\
 & \leq\frac{1+\epsilon}{1-\epsilon}\left(\XNormS{\matA\x^{\star}-\b}{{\cal H}}+\lambda\TNormS{\x^{\star}}+\lambda\right)-\lambda\\
 & \leq\frac{1+2\epsilon}{1-\epsilon}\left(\XNormS{\matA\x^{\star}-\b}{{\cal H}}+\lambda\TNormS{\x^{\star}}\right)
\end{align*}
where the first and third inequalities use Eq.~(\ref{eq:over_spectral}),
the second inequality follows from $\tilde{\x}$ being the minimizer
of the sampled SILR, and the last inequality uses the first assumption.
\end{proof}
Note that Proposition~\ref{prop:part_1} does not require $\matA_{\veta}$
and $\b_{\veta}$ to actually be row samples of $\matA$ and $\b$.

\subsection{\textcolor{black}{Randomized Sampling}}

One approach for selecting $\veta_{1},\dots,\veta_{s}$ and $w_{1},\dots,w_{s}$
is to sample $\veta_{1},\dots,\veta_{s}$ randomly from $\Omega$
and set the weights accordingly. The question is what distribution
on $\Omega$ to use, and how to set the weights? To answer these questions,
we show a general result on the number of samples $s$ required to
ensure Eq.~(\ref{eq:over_spectral}) holds given some distribution
on $\Omega$ and a specific way to set the weights. The result is
based on the concept of ridge leverage scores \cite{el2014fast,cohen2017input},
which we generalize to quasimatrices (the generalization is similar
to the one used in \cite{avron2017random,avron2019universal}).
\begin{defn}
\label{def:lev_scores}Let $\matA$ be a quasimatrix over $L_{2}(\Omega,d\mu)$
equipped with a coordinate representation $\z$, and $\lambda\geq0$.
Further assume that $\mu$ is a probability measure for which a corresponding
density $p$ exists. The \emph{$\lambda$-leverage function }of $\matA$
is 
\[
\tau_{\lambda}:\Omega\to\R,\quad\tau_{\lambda}(\veta)\coloneqq p(\veta)\z(\veta)^{\conj}(\matK+\lambda\matI_{n})^{-1}\z(\veta)
\]
where $\matK=\matA^{*}\matA$ if $\matA$ is a tall quasimatrix, or
$\matK=\matA\matA^{*}$ if $\matA$ is a wide quasimatrix.
\end{defn}

\begin{prop}
[Similar to Proposition 5 in \cite{avron2017random}] Under the same
conditions in Definition \ref{def:lev_scores}:
\[
\int_{\Omega}\tau_{\lambda}(\veta)d\veta=\Trace{(\matK+\lambda\matI_{n})^{-1}\matK}\eqqcolon s_{\lambda}(\matA)\,.
\]
($s_{\lambda}(\matA)$ is called the \emph{statistical dimension }of
$\matA$).
\end{prop}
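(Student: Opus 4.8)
The plan is to compute the integral $\int_\Omega \tau_\lambda(\veta)\, d\veta$ directly by substituting the definition of $\tau_\lambda$ and using the cyclic property of the trace together with the integral formula for $\matK$ already established in the excerpt. First I would write
\[
\int_\Omega \tau_\lambda(\veta)\, d\veta = \int_\Omega p(\veta)\, \z(\veta)^{\conj}(\matK+\lambda\matI_n)^{-1}\z(\veta)\, d\veta.
\]
Since $\z(\veta)^{\conj}(\matK+\lambda\matI_n)^{-1}\z(\veta)$ is a scalar, it equals its own trace, and by the cyclic property of the trace it equals $\Trace{(\matK+\lambda\matI_n)^{-1}\z(\veta)\z(\veta)^{\conj}}$. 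Because $d\mu(\veta) = p(\veta)\, d\veta$, I can then pull the (constant-in-$\veta$) matrix $(\matK+\lambda\matI_n)^{-1}$ outside the integral and recognize $\int_\Omega \z(\veta)\z(\veta)^{\conj}\, d\mu(\veta)$ as exactly $\matK$ — this is the identity $\matK = \matA^\conj\matA = \int_\Omega \z_\matA(\veta)\z_\matA(\veta)^\conj\, d\mu(\veta)$ noted just after the definition of coordinate representation (and symmetrically $\matK = \matA\matA^\conj$ in the wide case). This yields $\Trace{(\matK+\lambda\matI_n)^{-1}\matK}$, which is the claimed $s_\lambda(\matA)$.

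The only genuine technical point is justifying the interchange of the integral and the trace, i.e. moving $(\matK+\lambda\matI_n)^{-1}$ outside the integral sign. This is a finite-dimensional statement: $(\matK+\lambda\matI_n)^{-1}$ is a fixed $n\times n$ matrix, so $\int_\Omega \Trace{(\matK+\lambda\matI_n)^{-1}\z(\veta)\z(\veta)^{\conj}}\, d\mu(\veta) = \Trace{(\matK+\lambda\matI_n)^{-1}\int_\Omega \z(\veta)\z(\veta)^{\conj}\, d\mu(\veta)}$ follows from linearity of the trace and of the (entrywise) Bochner integral over $\C^{n\times n}$. Integrability of $\veta \mapsto \z(\veta)\z(\veta)^{\conj}$ in the entrywise sense is guaranteed by the defining condition $\int_\Omega \TNormS{\z(\veta)}\, d\mu(\veta) < \infty$ of a coordinate representation, since each entry of $\z(\veta)\z(\veta)^{\conj}$ is bounded in absolute value by $\TNormS{\z(\veta)}$.

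Thus the main obstacle, such as it is, is merely bookkeeping: being careful that $\tau_\lambda$ is defined with the density $p$ while the matrix identity for $\matK$ is stated with respect to $d\mu$, and observing that these reconcile precisely because $p$ is the density of $\mu$. Everything else is the cyclic-trace manipulation and the finite-dimensional Fubini/linearity argument. No appeal to the infinite dimension of $\H$ is needed beyond the already-established representation of $\matK$ as an integral of rank-one matrices.
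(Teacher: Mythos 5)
Your proof is correct and is exactly the standard argument: substitute the definition, use that $p(\veta)\,d\veta=d\mu(\veta)$, apply the cyclic property of the trace, and recognize $\int_\Omega\z(\veta)\z(\veta)^{\conj}\,d\mu(\veta)=\matK$ from the coordinate-representation identity. The paper omits the proof entirely (deferring to the cited reference), but your argument, including the integrability justification via $\int_\Omega\TNormS{\z(\veta)}\,d\mu(\veta)<\infty$, is precisely what is intended.
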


\begin{lem}
[Similar to Lemma 8 in \cite{avron2017random}]\label{lem:part_2}Consider
the overdetermined SILR\textcolor{red}{{} }
\[
\min\XNormS{\matA\x-\b}{L_{2}(\Omega,d\mu)}+\lambda\TNormS{\x}
\]
where $\matA$ is tall quasimatrix $\matA$ with $n$ columns, and
$\lambda\geq0$. If $\lambda=0$, further assume that $\matA$ is
full rank. Assume we have coordinate representation $\z_{\matA}:\Omega\to\mathbb{C}^{n}$
for $\matA$ and $\z_{\b}:\Omega\to\mathbb{C}$ for $\b$. Assume
that
\[
\left\Vert \left[\begin{array}{c}
\matA^{*}\\
\b^{*}
\end{array}\right]\left[\begin{array}{cc}
\matA & \b\end{array}\right]\right\Vert _{2}\geq\lambda\,.
\]
Let $\tau_{\lambda}(\veta)$ be the $\lambda$-leverage function of
$\left[\begin{array}{cc}
\matA & \b\end{array}\right]$. Let $\tilde{\tau}:\Omega\to\mathbb{R}$ be a measurable function
such that $\tilde{\tau}(\veta)\geq\tau_{\lambda}(\veta)$ for all
$\veta\in\Omega$, and assume that $s_{\tilde{\tau}}=\int_{\Omega}\tilde{\tau}(\veta)d\veta<\infty$.
Also, denote $p_{\tilde{\tau}}(\veta)=\tilde{\tau}(\veta)/s_{\tilde{\tau}}$.
Suppose we sample $\veta_{1},\dots,\veta_{s}$ using $p_{\tilde{\tau}}$
and set $w_{j}=\sqrt{\frac{p(\veta_{j})}{sp_{\tilde{\tau}}(\veta_{j})}}$.
Given $\epsilon\leq1/2$ and $0<\delta<1$, if $s\geq\frac{8}{3}s_{\tilde{\tau}}\epsilon^{-2}\ln(16s_{\lambda}\left(\left[\begin{array}{cc}
\matA & \b\end{array}\right]\right)/\delta)$ then 
\begin{equation}
(1-\epsilon)\left(\XNormS{\matA\x-\b}{L_{2}(\Omega,d\mu)}+\lambda\TNormS{\x}+\lambda\right)\leq\TNormS{\matA_{\veta}\x-\b_{\veta}}+\lambda\TNormS{\x}+\lambda\leq(1+\epsilon)\left(\XNormS{\matA\x-\b}{L_{2}(\Omega,d\mu)}+\lambda\TNormS{\x}+\lambda\right)\label{eq:spectral_lambda}
\end{equation}
 holds with probability of at least $1-\delta$. 
\end{lem}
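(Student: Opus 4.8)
The plan is to reduce the two-sided spectral equivalence claimed in Eq.~(\ref{eq:spectral_lambda}) to a single operator-norm estimate on an $(n+1)\times(n+1)$ matrix, and then to establish that estimate with a matrix Bernstein inequality in its intrinsic-dimension form. Set $\matM=\left[\begin{array}{cc}\matA & \b\end{array}\right]$, a tall quasimatrix over $L_{2}(\Omega,d\mu)$ with the concatenated coordinate representation $\z_{\matM}(\veta)=\left[\begin{smallmatrix}\z_{\matA}(\veta)\\ \z_{\b}(\veta)\end{smallmatrix}\right]$, so that (using the density $p$ of $\mu$) $\matK_{\matM}\coloneqq\matM^{\conj}\matM=\int_{\Omega}p(\veta)\z_{\matM}(\veta)\z_{\matM}(\veta)^{\conj}d\veta$, so that $\tau_{\lambda}$ is exactly the $\lambda$-leverage function of $\matM$, and so that the sampled matrix $\matM_{\veta}\coloneqq\left[\begin{array}{cc}\matA_{\veta} & \b_{\veta}\end{array}\right]$ is literally a weighted row sample of $\matM$, with $\matK_{\matM_{\veta}}\coloneqq\matM_{\veta}^{\conj}\matM_{\veta}=\sum_{j=1}^{s}w_{j}^{2}\z_{\matM}(\veta_{j})\z_{\matM}(\veta_{j})^{\conj}$. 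For $\x\in\C^{n}$ put $\tilde{\x}=\left[\begin{smallmatrix}\x\\ -1\end{smallmatrix}\right]\in\C^{n+1}$; then $\matM\tilde{\x}=\matA\x-\b$ and $\lambda\TNormS{\x}+\lambda=\lambda\TNormS{\tilde{\x}}$, so the three expressions in Eq.~(\ref{eq:spectral_lambda}) divided by $1-\epsilon$, $1$ and $1+\epsilon$ equal $\tilde{\x}^{\conj}(\matK_{\matM}+\lambda\matI_{n+1})\tilde{\x}$, $\tilde{\x}^{\conj}(\matK_{\matM_{\veta}}+\lambda\matI_{n+1})\tilde{\x}$ and $\tilde{\x}^{\conj}(\matK_{\matM}+\lambda\matI_{n+1})\tilde{\x}$. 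Hence it suffices to prove the stronger unconstrained statement $(1-\epsilon)(\matK_{\matM}+\lambda\matI_{n+1})\preceq\matK_{\matM_{\veta}}+\lambda\matI_{n+1}\preceq(1+\epsilon)(\matK_{\matM}+\lambda\matI_{n+1})$, which, conjugating by $\matN\coloneqq(\matK_{\matM}+\lambda\matI_{n+1})^{-1/2}$, is equivalent to $\|\matN(\matK_{\matM_{\veta}}-\matK_{\matM})\matN\|_{2}\le\epsilon$.

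Next I would express $\matN(\matK_{\matM_{\veta}}-\matK_{\matM})\matN$ as a sum of i.i.d.\ mean-zero self-adjoint matrices and estimate the two quantities needed for matrix Bernstein. Writing $\v(\veta)=\matN\z_{\matM}(\veta)$ and $\matW=\matN\matK_{\matM}\matN$, the weight choice $w_{j}^{2}=p(\veta_{j})/(s\,p_{\tilde{\tau}}(\veta_{j}))$ gives $\matN\matK_{\matM_{\veta}}\matN=\frac{1}{s}\sum_{j=1}^{s}\frac{p(\veta_{j})}{p_{\tilde{\tau}}(\veta_{j})}\v(\veta_{j})\v(\veta_{j})^{\conj}$, and since $\veta_{j}\sim p_{\tilde{\tau}}$ one checks $\mathbb{E}\big[\frac{p(\veta_{j})}{p_{\tilde{\tau}}(\veta_{j})}\v(\veta_{j})\v(\veta_{j})^{\conj}\big]=\int_{\Omega}p(\veta)\v(\veta)\v(\veta)^{\conj}d\veta=\matW$, so $\matN\matK_{\matM_{\veta}}\matN-\matW=\sum_{j=1}^{s}\matY_{j}$ with $\matY_{j}=\frac{1}{s}\big(\frac{p(\veta_{j})}{p_{\tilde{\tau}}(\veta_{j})}\v(\veta_{j})\v(\veta_{j})^{\conj}-\matW\big)$ mean zero. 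The engine of the argument is the pointwise identity $\frac{p(\veta)}{p_{\tilde{\tau}}(\veta)}\TNormS{\v(\veta)}=\frac{p(\veta)}{p_{\tilde{\tau}}(\veta)}\z_{\matM}(\veta)^{\conj}(\matK_{\matM}+\lambda\matI_{n+1})^{-1}\z_{\matM}(\veta)=\frac{\tau_{\lambda}(\veta)}{p_{\tilde{\tau}}(\veta)}=s_{\tilde{\tau}}\frac{\tau_{\lambda}(\veta)}{\tilde{\tau}(\veta)}\le s_{\tilde{\tau}}$, where the last step uses $\tilde{\tau}\ge\tau_{\lambda}$. Since the two PSD matrices making up $\matY_{j}$ have operator norm $\le s_{\tilde{\tau}}$ and $\le\|\matW\|_{2}\le1$, this yields $\|\matY_{j}\|_{2}\le(s_{\tilde{\tau}}+1)/s$; and applying the same bound inside the second moment, $\mathbb{E}[\matY_{j}^{2}]\preceq\frac{1}{s^{2}}\mathbb{E}\big[\frac{p(\veta_{j})}{p_{\tilde{\tau}}(\veta_{j})}\TNormS{\v(\veta_{j})}\cdot\frac{p(\veta_{j})}{p_{\tilde{\tau}}(\veta_{j})}\v(\veta_{j})\v(\veta_{j})^{\conj}\big]\preceq\frac{s_{\tilde{\tau}}}{s^{2}}\matW$, so the total variance of $\sum_{j}\matY_{j}$ is dominated by the proxy $\matV'\coloneqq\frac{s_{\tilde{\tau}}}{s}\matW$.

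Finally I would invoke the intrinsic-dimension matrix Bernstein inequality for $\sum_{j}\matY_{j}$ with variance proxy $\matV'$. Its intrinsic dimension is $\Trace{\matV'}/\|\matV'\|_{2}=\Trace{\matW}/\|\matW\|_{2}=s_{\lambda}(\matM)/\|\matW\|_{2}$, where $\Trace{\matW}=\Trace{(\matK_{\matM}+\lambda\matI_{n+1})^{-1}\matK_{\matM}}=s_{\lambda}(\matM)$ is the statistical dimension of $\matM=\left[\begin{array}{cc}\matA & \b\end{array}\right]$; and this is exactly where the hypothesis $\big\|\left[\begin{smallmatrix}\matA^{\conj}\\ \b^{\conj}\end{smallmatrix}\right]\left[\begin{smallmatrix}\matA & \b\end{smallmatrix}\right]\big\|_{2}\ge\lambda$ is used: it says $\|\matK_{\matM}\|_{2}\ge\lambda$, hence $\|\matW\|_{2}=\|\matK_{\matM}\|_{2}/(\|\matK_{\matM}\|_{2}+\lambda)\ge 1/2$ and the intrinsic dimension is at most $2s_{\lambda}(\matM)$. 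Plugging $\|\matV'\|_{2}=s_{\tilde{\tau}}/s$, the norm bound $L=(s_{\tilde{\tau}}+1)/s$ and $t=\epsilon\le 1/2$ into the inequality (the side condition $\epsilon\ge\sqrt{\|\matV'\|_{2}}+L/3$ holding once $s$ meets the assumed threshold) bounds the failure probability by $16\,s_{\lambda}(\matM)\exp\big(-\tfrac{\epsilon^{2}/2}{\,s_{\tilde{\tau}}/s+(s_{\tilde{\tau}}+1)\epsilon/(3s)}\big)$; requiring this to be $\le\delta$ and bounding the denominator for $\epsilon\le 1/2$ gives precisely $s\ge\frac{8}{3}s_{\tilde{\tau}}\epsilon^{-2}\ln(16 s_{\lambda}(\matM)/\delta)$. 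The routine part is tracking absolute constants through Bernstein; the two steps that actually carry the proof, and which I would state most carefully, are the coordinate-representation and augmentation bookkeeping of the first paragraph (so that $\tau_{\lambda}$ and $\matM_{\veta}$ really are the leverage function and a row sample of $\matM$) and the use of the spectral-norm hypothesis to keep the dimension factor at $O(s_{\lambda}(\matM))$ rather than $O(n)$ — without it $\|\matW\|_{2}$ could be tiny and the intrinsic-dimension bound would degrade.
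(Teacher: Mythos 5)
Your proposal is correct and follows essentially the same route as the paper's proof sketch: augment $\matA$ with $\b$ and the $\sqrt{\lambda}$ ridge term, reduce Eq.~(\ref{eq:spectral_lambda}) to a two-sided Loewner bound, whiten by $(\matK_{\matM}+\lambda\matI_{n+1})^{-1/2}$ (the paper writes this as $\mat{\Sigma}^{-1}\matV^{*}$), bound the norm and second moment of the rank-one summands via $\tilde{\tau}\geq\tau_{\lambda}$, and invoke the intrinsic-dimension matrix Bernstein inequality \cite[Corollary 7.3.3]{tropp2015introduction}. You in fact supply more detail than the paper does, in particular the role of the hypothesis $\|\matM^{*}\matM\|_{2}\geq\lambda$ in keeping the intrinsic dimension at $O(s_{\lambda}(\matM))$; only the final constant-chasing (e.g.\ verifying that $\tfrac{7}{3}s_{\tilde{\tau}}+\tfrac{1}{3}\leq\tfrac{8}{3}s_{\tilde{\tau}}$) is left implicit, which matches the level of detail of the paper's own sketch.
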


\begin{proof}
[Proof Sketch.]The proof is very similar to the proof of \cite[Lemma 8]{avron2017random},
so we give only a sketch of the proof. Denote
\[
\hat{\matA}=\left[\begin{array}{c}
\begin{array}{cc}
\matA & \b\end{array}\\
\sqrt{\lambda}\matI_{n+1}
\end{array}\right],\,\hat{\matA}_{\veta}=\left[\begin{array}{c}
\begin{array}{cc}
\matA_{\veta} & \b_{\veta}\end{array}\\
\sqrt{\lambda}\matI_{n+1}
\end{array}\right],\,\hat{\x}=\frac{1}{\sqrt{1+\TNormS{\x}}}\left[\begin{array}{c}
\x\\
-1
\end{array}\right]\,.
\]

Then, the inequality (\ref{eq:spectral_lambda}) is equivalent to
\[
(1-\epsilon)\XNormS{\hat{\matA}\hat{\x}}{L_{2}({\cal X},d\mu)}\leq\TNormS{\hat{\matA}_{\veta}\hat{\x}}\leq(1+\epsilon)\XNormS{\hat{\matA}\hat{\x}}{L_{2}({\cal X},d\mu)}
\]
i.e.,
\[
-\epsilon\hat{\matA}^{*}\hat{\matA}\preceq\hat{\matA}_{\veta}^{*}\hat{\matA}_{\veta}-\hat{\matA}^{*}\hat{\matA}\preceq\epsilon\hat{\matA}^{*}\hat{\matA}\,.
\]
We write $\hat{\matA}^{*}\hat{\matA}=\matV^{*}\ensuremath{\mat{\Sigma}}^{2}\matV$.
The claim is now equivalent to 
\[
-\epsilon\matI_{d}\preceq\mat{\Sigma}^{-1}\matV^{\star}\hat{\matA}_{\veta}^{*}\hat{\matA}_{\veta}\matV\mat{\Sigma}^{-1}-\matI_{d}\preceq\epsilon\matI_{d}\,.
\]
Notice that 
\[
\matA_{\veta}^{*}\matA_{\veta}=\sum_{j=1}^{s}w_{j}^{2}\z_{\matA}(\veta_{j})\z_{\matA}(\veta_{j})^{*},\,\b_{\veta}^{*}\b_{\veta}=\sum_{j=1}^{s}w_{j}^{2}\z_{\b}(\veta_{j})\z_{\b}(\veta_{j})^{*}\,.
\]
It can be seen that $\z(\veta)=\left[\begin{array}{c}
\z_{\matA}(\veta)\\
\z_{\b}(\veta)
\end{array}\right]$ is a coordinate representation for the quasimatrix part of $\hat{\matA}$.
Let
\[
\matS_{j}=\frac{p(\veta_{j})}{p_{\tilde{\tau}}(\veta_{j})}\mat{\Sigma}^{-1}\matV^{*}\z(\veta_{j})\z(\veta_{j})^{*}\matV\mat{\Sigma}^{-1}\,.
\]
It is possible to show that $\mathbb{E}[\matS_{j}^{2}]\preceq s_{\tilde{\tau}}\mathbb{E}[\matS_{j}]$
and $\Trace{\mathbb{E}[\matS_{j}]}=s_{\tilde{\tau}}\cdot s_{\lambda}(\hat{\matK})$.
The claim follows from \cite[Corollary 7.3.3]{tropp2015introduction}.
\end{proof}
A similar result appears in \cite{cohen2017optimal} for truncated
and conditioned least squares approximations of functions, however
without any ridge term. The ridge leverage function can be viewed
as a variant of the Christoffel function \cite{pauwels2018relating}
from the literature on orthogonal polynomials and approximation theory
\cite{pauwels2018relating,nevai1986geza,totik2000asymptotics,borwein2012polynomials}.

One natural strategy for selecting the $\veta_{1},\dots,\veta_{s}$
is to sample them using the distribution $\mu$. We call this strategy
``\emph{natural sampling''. }Using Lemma \ref{lem:part_2} we can
give a bound on the number of samples needed when sampling $\veta_{1},\dots,\veta_{s}$
using this strategy and setting all the weights to $\sqrt{1/s}$.
\begin{prop}
\label{prop:randomized-sampling}Let $\tau_{\lambda}(\veta)$ be the
$\lambda$-leverage function of $\left[\begin{array}{cc}
\matA & \b\end{array}\right]$. Suppose that $M_{\lambda}=M_{\lambda}\left(\left[\begin{array}{cc}
\matA & \b\end{array}\right]\right)\coloneqq\sup_{\veta\in\Omega}\tau_{\lambda}(\veta)/p(\veta)$ is finite. Suppose we sample $\veta_{1},\dots,\veta_{s}$ using $\mu$,
and set $w_{j}=\sqrt{1/s}$ for $j=1,\dots,s$. If 
\[
s\geq\frac{8}{3}M_{\lambda}\epsilon^{-2}\ln(16s_{\lambda}\left(\left[\begin{array}{cc}
\matA & \b\end{array}\right]\right)/\delta)
\]
then Eq.~(\ref{eq:spectral_lambda}) holds with probability of at
least $1-\delta$.
\end{prop}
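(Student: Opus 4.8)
The plan is to obtain Proposition~\ref{prop:randomized-sampling} as an immediate corollary of Lemma~\ref{lem:part_2} by instantiating the generic majorant $\tilde{\tau}$ with a constant multiple of the density $p$. Concretely, I would set $\tilde{\tau}(\veta) = M_{\lambda}\, p(\veta)$, where $M_{\lambda} = \sup_{\veta\in\Omega}\tau_{\lambda}(\veta)/p(\veta)$ is the quantity assumed finite in the statement. By the very definition of $M_{\lambda}$, we have $\tilde{\tau}(\veta) = M_{\lambda} p(\veta) \geq \tau_{\lambda}(\veta)$ for every $\veta\in\Omega$, so $\tilde{\tau}$ is a valid measurable upper bound on the $\lambda$-leverage function of $\left[\begin{array}{cc}\matA & \b\end{array}\right]$, as required by Lemma~\ref{lem:part_2}. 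Moreover, since $p$ is a probability density, $s_{\tilde{\tau}} = \int_{\Omega}\tilde{\tau}(\veta)\,d\veta = M_{\lambda}\int_{\Omega}p(\veta)\,d\veta = M_{\lambda} < \infty$, which meets the integrability hypothesis of the lemma.

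Next I would verify that, under this choice, the sampling scheme prescribed by Lemma~\ref{lem:part_2} collapses exactly to natural sampling with uniform weights. The sampling density in the lemma is $p_{\tilde{\tau}}(\veta) = \tilde{\tau}(\veta)/s_{\tilde{\tau}} = M_{\lambda} p(\veta)/M_{\lambda} = p(\veta)$, so $p_{\tilde{\tau}} = p$ and drawing $\veta_{1},\dots,\veta_{s}$ from $p_{\tilde{\tau}}$ coincides with drawing them from $\mu$. The weights the lemma prescribes are $w_{j} = \sqrt{p(\veta_{j})/(s\, p_{\tilde{\tau}}(\veta_{j}))} = \sqrt{1/s}$, matching the statement of the proposition. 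Hence the row-sampled $\matA_{\veta}$ and $\b_{\veta}$ built by natural sampling are precisely the objects to which Lemma~\ref{lem:part_2} applies. Substituting $s_{\tilde{\tau}} = M_{\lambda}$ into the lemma's sample-size requirement $s \geq \frac{8}{3} s_{\tilde{\tau}} \epsilon^{-2}\ln(16 s_{\lambda}(\left[\begin{array}{cc}\matA & \b\end{array}\right])/\delta)$ yields exactly the stated bound, and the lemma then delivers Eq.~(\ref{eq:spectral_lambda}) with probability at least $1-\delta$. I would also flag that, inheriting from Lemma~\ref{lem:part_2}, the reduction tacitly uses the condition $\left\Vert \left[\begin{array}{cc}\matA & \b\end{array}\right]^{*}\left[\begin{array}{cc}\matA & \b\end{array}\right]\right\Vert_{2} \geq \lambda$; this should either be listed among the hypotheses or treated as a standing assumption.

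There is essentially no substantive obstacle here: the entire analytic content — the matrix concentration bound behind Eq.~(\ref{eq:spectral_lambda}) — is already packaged in Lemma~\ref{lem:part_2}, and the proposition is a bookkeeping specialization. The only point that warrants care is checking that the constant-times-density choice of $\tilde{\tau}$ is admissible, i.e. that $M_{\lambda}$ as a genuine supremum (rather than an essential supremum) gives the pointwise inequality $\tilde{\tau}(\veta)\geq\tau_{\lambda}(\veta)$ for \emph{all} $\veta$, which is exactly what the lemma demands; with the density assumption in force this is immediate, so the proof is short.
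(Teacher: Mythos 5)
Your proof is correct and is essentially identical to the paper's own argument: both instantiate Lemma~\ref{lem:part_2} with $\tilde{\tau}(\veta)=M_{\lambda}p(\veta)$, observe that $s_{\tilde{\tau}}=M_{\lambda}$ and $p_{\tilde{\tau}}=p$ so the weights become $\sqrt{1/s}$, and read off the sample-size bound. Your added remark that the hypothesis $\left\Vert \left[\begin{array}{cc}\matA & \b\end{array}\right]^{*}\left[\begin{array}{cc}\matA & \b\end{array}\right]\right\Vert_{2}\geq\lambda$ is inherited from the lemma is a reasonable point of care that the paper leaves implicit.
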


\begin{proof}
Let us define $\tilde{\tau}(\veta)=M_{\lambda}p(\veta)$. Notice that
$s_{\tilde{\tau}}=M_{\lambda}$ and that $p_{\tilde{\tau}}(\veta)=p(\veta)$.
Thus, the conditions of Proposition~\ref{lem:part_2} hold if we
sample using $p(\cdot)$ and set the weights to $\sqrt{1/s}$, and
the claim follows.
\end{proof}
The quantity $M_{\lambda}$ is a generalization of the concept of
matrix coherence \cite{AMT10} to quasimatrices. A similar quantity
appears in \cite{cohen2013stability} in the context of function approximation
using sampling. When using natural sampling, the number of samples
required for Eq.~(\ref{eq:spectral_lambda}) to hold with high probability
depends on the coherence of the quasimatrix, which can be large. Sampling
using the ridge leverage scores, often referred to as \emph{leverage
score sampling}, yields a better bound since $s_{\lambda}([\begin{array}{cc}
\matA & \b\end{array}])\leq M_{\lambda}([\begin{array}{cc}
\matA & \b\end{array}])$.

Of course, it is not simple to sample using the ridge leverage function.
Cohen and Migliorati suggested a method from leverage score sampling
when $\lambda=0$ \cite{cohen2017optimal}. Their method is based
on sequential conditional sampling, where individual coordinates are
sampled using either rejection sampling or inversion transform sampling.
An alternative approach is to find some simple and easy way to sample
upper bound on $\tau_{\lambda}$. For this to be worthwhile, the bound
has to be tighter than the bound $\tau_{\lambda}(\veta)\leq M_{\lambda}p(\veta)$
used in Proposition~\ref{prop:randomized-sampling}. This approach
is used in \cite{avron2017random,avron2019universal}.

\subsection{Quadrature Sampling}

In this section, we discuss deterministic sampling using quadrature
formulas. For simplicity, we assume that $\Omega=[-1,1]$ and that
$\mu$ is the Lebesgue measure on $[-1,1].$ Accordingly, the sampling
scheme is based on the Gauss-Legendre quadrature. Higher dimensional
domains can be handled via tensoring the quadrature. We also assume
that $\lambda>0$. Let $\z_{\matA}:\R\to\C^{n}$ be a coordinate representation
of $\matA$. We can write 
\[
\matA^{*}\matA=\int_{-1}^{1}\z_{\matA}(\eta)\z_{\matA}(\eta)^{\conj}d\eta\,.
\]
Furthermore, for every $\x\in\mathbb{R}^{n}$
\[
\XNormS{\matA\x}{L_{2}([-1,1],d\mu)}=\int_{-1}^{1}\x^{\T}\z_{\matA}(\eta)\z_{\matA}(\eta)^{\conj}\x d\eta=\int_{-1}^{1}\left|\z_{\matA}(\eta)^{\conj}\x\right|^{2}d\eta\,.
\]
Let $\z_{\b}:\R\to\C$ be a coordinate representation of $\b$. Then,
\[
\matA^{*}\b=\int_{-1}^{1}\z_{\matA}(\eta)\overline{\z_{\b}(\eta)}d\eta,\quad\XNormS{\b}{L_{2}([-1,1],d\mu)}=\int_{-1}^{1}\left|\z_{\b}(\eta)\right|^{2}d\eta\,.
\]
We conclude that the overdetermined SILR can be written as an integral
form
\begin{equation}
\int_{-1}^{1}f_{\x}(\eta)d\eta=\XNormS{\matA\x-\b}{L_{2}([-1,1],d\mu)}+\lambda\TNormS{\x}\label{eq:int_A}
\end{equation}
where
\[
f_{\x}(\eta)=\left|\z_{\matA}(\eta)^{\conj}\x-\z_{\b}(\eta)\right|^{2}+\frac{\lambda}{2}\TNormS{\x}\,.
\]

The underlying idea is to approximate the integral in Eq.~(\ref{eq:int_A})
using the Gauss-Legendre quadrature. For a given $\epsilon\in(0,1)$,
our algorithm sets the nodes $\eta_{1},\dots,\eta_{s}\in[-1,1]$ to
be the Gauss-Legendre quadrature nodes, and sets the weights $w_{1},\dots,w_{s}>0$
so that their square are the Gauss-Legendre quadrature weights. We
set $s$ to be large enough so that 
\begin{equation}
\frac{\left|\int_{-1}^{1}f_{\x}(\eta)d\eta-\sum_{j=1}^{s}w_{j}^{2}f_{\x}(\eta_{j})\right|}{\int_{-1}^{1}f_{\x}(\eta)d\eta+\lambda}\leq\epsilon\,.\label{eq:quad}
\end{equation}
Once $\eta_{1},\dots,\eta_{s}$ and the weights $w_{1},\dots,w_{s}$
are computed, we can define $\matA_{\veta}$ and $\b_{\veta}$ as
before. We have
\begin{align*}
\sum_{j=1}^{s}w_{j}^{2}f_{\x}(\eta_{j}) & =\TNormS{\matA_{\veta}\x-\b_{\veta}}+\lambda\TNormS{\x}
\end{align*}
so if Eq.~(\ref{eq:quad}) holds then Eq.~(\ref{eq:over_spectral})
holds (with $\H=L_{2}([-1,1],d\mu)$), and we can apply Proposition~\ref{prop:part_1}.

To determine how many quadrature nodes $s$ are needed so that Eq.~(\ref{eq:quad})
holds, we can \textcolor{black}{apply the following theorem, which
is a modified version of \cite[Theorem 11]{shustin2021gauss} for
the function $g_{\x}(\eta)\coloneqq f_{\x}(\eta)/(\int_{-1}^{1}f_{\x}(\eta)d\eta+\lambda)$}.
Since the proof is a simple modification of the proof\textcolor{black}{{}
\cite[Theorem 11]{shustin2021gauss},} we omit it.
\begin{thm}
\label{thm:quad}Let $E$ be the (Bernstein) ellipse in the complex
plane with foci $\pm1$ that passes through $i$, and let $\rho=1+\sqrt{2}$.
Assume that both real and imaginary parts of $\z_{\matA}(\cdot)_{i},\,i=1,\dots,n$
and $\z_{\b}(\cdot)$ are analytic on $\mathbb{R}$, and denote their
analytic continuations by $\hat{\z}_{\matA}(\cdot)$ and $\hat{\z}_{\b}(\cdot)$
correspondingly. Denote
\[
M_{\mat A}\coloneqq\sup_{\eta\in E}\InfNorm{\hat{\z}_{\matA}(\eta)},\quad M_{\b}\coloneqq\sup_{\eta\in E}\InfNorm{\hat{\z}_{\b}(\eta)}\,.
\]
Then, given a small $\epsilon$, for
\[
s\geq\frac{\ln\left(8(\lambda^{-1}(nM_{\matA}^{2}+M_{\b}^{2})+1)\right)-\ln\epsilon-\ln\sqrt{2}}{2\ln(1+\sqrt{2})}+1
\]
we have
\[
\left|\int_{-1}^{1}g_{\x}(\eta)d\eta-\sum_{j=1}^{s}w_{j}g_{\x}(\eta_{j})\right|\leq\epsilon
\]
where $\eta_{1},\dots\eta_{s}$ are chosen to be the Gauss-Legendre
quadrature nodes, and $w_{1}^{2},\dots,w_{s}^{2}$ are the Gauss-Legendre
quadrature weights.
\end{thm}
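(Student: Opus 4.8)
The starting observation is that the quantity $D_{\x}\coloneqq\int_{-1}^{1}f_{\x}(\eta)\,d\eta+\lambda=\XNormS{\matA\x-\b}{L_{2}([-1,1],d\mu)}+\lambda\TNormS{\x}+\lambda$ normalizing $g_{\x}$ does not depend on $\eta$, so the $s$-point Gauss-Legendre quadrature error of $g_{\x}$ is exactly $D_{\x}^{-1}$ times that of $f_{\x}$. The plan therefore is: (i) continue $f_{\x}$ analytically past $[-1,1]$; (ii) bound $\lvert\widehat{f_{\x}}\rvert$ on the Bernstein ellipse $E$ and, after dividing by $D_{\x}$, produce a bound on $\sup_{E}\lvert\widehat{g_{\x}}\rvert$ that is \emph{uniform in $\x$}; (iii) substitute that bound and $\rho=1+\sqrt{2}$ into the classical Gauss-Legendre error estimate for functions analytic inside and on a Bernstein ellipse, and solve for $s$. (The value $\rho=1+\sqrt{2}$ is forced: $\tfrac{1}{2}(w+w^{-1})=i$ has solutions $w=i(1\pm\sqrt{2})$, so the Bernstein ellipse through $i$ has parameter $1+\sqrt{2}$.)

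For step (i), I would write, for real $\eta$ and real $\x$, $f_{\x}(\eta)=P_{\x}(\eta)Q_{\x}(\eta)+\tfrac{\lambda}{2}\TNormS{\x}$ with $P_{\x}(\eta)\coloneqq\z_{\matA}(\eta)^{\T}\x-\overline{\z_{\b}(\eta)}$ and $Q_{\x}(\eta)\coloneqq\z_{\matA}(\eta)^{\conj}\x-\z_{\b}(\eta)$; since $P_{\x}=\overline{Q_{\x}}$ for real $\eta$, this agrees with $f_{\x}(\eta)=\lvert\z_{\matA}(\eta)^{\conj}\x-\z_{\b}(\eta)\rvert^{2}+\tfrac{\lambda}{2}\TNormS{\x}$, and keeping the product form (rather than expanding into a sum of squares) is what keeps the modulus estimates on $E$ clean. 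By the analyticity hypothesis on the real and imaginary parts of $\z_{\matA}(\cdot)_{i}$ and $\z_{\b}$, the factors $P_{\x}$ and $Q_{\x}$ (and hence $f_{\x}$ and $g_{\x}$) extend holomorphically to a neighbourhood of $E$ and its interior; the continuation of $\eta\mapsto\overline{\z_{\b}(\eta)}$ is $z\mapsto\overline{\hat{\z}_{\b}(\bar{z})}$ (Schwarz reflection applied to the real and imaginary parts), and similarly for $\overline{\z_{\matA}(\cdot)_{i}}$. Since $E$ is symmetric about the real axis, these conjugated continuations have the same supremum over $E$ as $\hat{\z}_{\b}$ and the $\hat{\z}_{\matA}(\cdot)_{i}$, namely $\le M_{\b}$ and $\le M_{\matA}$.

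For step (ii), using $\lvert\hat{\z}_{\matA}(z)_{i}\rvert\le M_{\matA}$ on $E$ and $\|\x\|_{1}\le\sqrt{n}\,\TNorm{\x}$, both $\lvert\widehat{P_{\x}}(z)\rvert$ and $\lvert\widehat{Q_{\x}}(z)\rvert$ are $\le\sqrt{n}\,M_{\matA}\TNorm{\x}+M_{\b}$ on $E$, so $\sup_{E}\lvert\widehat{f_{\x}}\rvert\le(\sqrt{n}\,M_{\matA}\TNorm{\x}+M_{\b})^{2}+\tfrac{\lambda}{2}\TNormS{\x}\le 2nM_{\matA}^{2}\TNormS{\x}+2M_{\b}^{2}+\tfrac{\lambda}{2}\TNormS{\x}$. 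Dividing the three summands separately by $D_{\x}\ge\lambda(\TNormS{\x}+1)$ makes every $\x$-dependent factor at most $1$, leaving $\sup_{E}\lvert\widehat{g_{\x}}\rvert\le 2\lambda^{-1}(nM_{\matA}^{2}+M_{\b}^{2})+\tfrac{1}{2}\le 2\bigl(\lambda^{-1}(nM_{\matA}^{2}+M_{\b}^{2})+1\bigr)\eqqcolon\widehat{M}$, which is independent of $\x$. I expect this step to be the entire content of the modification over \cite[Theorem 11]{shustin2021gauss}: it is exactly the extra $+\lambda$ carried inside $D_{\x}$ that yields an $\x$-free bound on $\sup_{E}\lvert\widehat{g_{\x}}\rvert$, which is what permits a single $s$ to serve all $\x$ at once.

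For step (iii), $g_{\x}$ is analytic inside and on the Bernstein ellipse of parameter $\rho=1+\sqrt{2}$ with $\lvert\widehat{g_{\x}}\rvert\le\widehat{M}$ there, so I would plug $\rho$ and $\widehat{M}$ into the standard $s$-point Gauss-Legendre error bound for such functions, require the resulting quantity to be $\le\epsilon$, take logarithms, and use the identity $8\bigl(\lambda^{-1}(nM_{\matA}^{2}+M_{\b}^{2})+1\bigr)=4\widehat{M}$; tracking the numerical constant of the quadrature estimate (the one used in \cite{shustin2021gauss}) then reproduces the displayed threshold for $s$. The only genuine work, and the main obstacle, is bookkeeping: ensuring via the reflection symmetry of $E$ that $M_{\matA}$ and $M_{\b}$ really dominate everything that appears on $E$, and carrying the constant of the analytic Gauss-Legendre estimate through to the displayed bound --- which is precisely why the paper defers the details to \cite{shustin2021gauss}.
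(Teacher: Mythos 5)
Your proposal is correct and matches the intended argument: the paper itself omits the proof, deferring to \cite[Theorem 11]{shustin2021gauss}, and the only genuinely new content is the uniform-in-$\x$ bound on $\sup_{\eta\in E}|\hat{g}_{\x}(\eta)|$, which is exactly what you isolate in step (ii). The one place where you diverge from the paper is how that bound is obtained. The paper's remark inserts $\hat{\matK}^{-1/2}\hat{\matK}^{1/2}$ (with $\hat{\matK}$ the $\lambda$-regularized Gram matrix of $[\matA\ \b]$) and applies Cauchy--Schwarz in the $\hat{\matK}$-inner product; the factor $[\x\ {-1}]\hat{\matK}[\x;{-1}]$ then cancels $c_{\x}$ exactly, giving $\sup_{E}|\hat{g}_{\x}|\leq\lambda^{-1}(nM_{\matA}^{2}+M_{\b}^{2})+\tfrac{1}{2}$. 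You instead bound $|\widehat{P_{\x}}|,|\widehat{Q_{\x}}|\leq\sqrt{n}M_{\matA}\TNorm{\x}+M_{\b}$ directly and divide termwise by $D_{\x}\geq\lambda(\TNormS{\x}+1)$, arriving at $2\bigl(\lambda^{-1}(nM_{\matA}^{2}+M_{\b}^{2})+1\bigr)$, i.e.\ a factor of roughly $2$ worse. This costs only an additive $\ln 2/(2\ln(1+\sqrt{2}))<1$ in the threshold for $s$, so the result is unaffected up to the unspecified constant absorbed by the quadrature estimate; the paper's weighting trick is the sharper and more idiomatic route (it is the same device that makes the ridge leverage function bounded), but yours is more elementary. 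Your treatment of the analytic continuation of the conjugated factors via Schwarz reflection and the symmetry of $E$ about the real axis is a point the paper's remark glosses over entirely, and is handled correctly.
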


\begin{rem}
For $\eta\in E$, we denote $c_{\x}=\XNormS{\matA\x-\b}{L_{2}([-1,1],d\mu)}+\lambda\TNormS{\x}+\lambda$
and bound $g_{\x}(\eta)$ as follows{\footnotesize{}
\begin{align*}
g_{\x}(\eta) & =\frac{1}{c_{\x}}\left(\left|\z_{\matA}(\eta)^{\conj}\x-\z_{\b}(\eta)\right|^{2}+\frac{\lambda}{2}\TNormS{\x}\right)\\
 & =\frac{1}{c_{\x}}\left(\left|\left[\begin{array}{cc}
\z_{\matA}(\eta)^{\conj} & \z_{\b}(\eta)\end{array}\right]\left[\begin{array}{c}
\x\\
-1
\end{array}\right]\right|^{2}+\frac{\lambda}{2}\TNormS{\x}\right)\\
 & =\frac{1}{c_{\x}}\left(\left|\left[\begin{array}{cc}
\z_{\matA}(\eta)^{\conj} & \z_{\b}(\eta)\end{array}\right]\hat{\matK}^{-1/2}\hat{\matK}^{1/2}\left[\begin{array}{c}
\x\\
-1
\end{array}\right]\right|^{2}+\frac{\lambda}{2}\TNormS{\x}\right)\\
 & \leq\frac{1}{c_{\x}}\left(\left[\begin{array}{cc}
\z_{\matA}(\eta)^{\conj} & \z_{\b}(\eta)\end{array}\right]\hat{\matK}^{-1}\left[\begin{array}{c}
\z_{\matA}(\eta)\\
\z_{\b}(\eta)^{*}
\end{array}\right]\cdot\left(\left[\begin{array}{cc}
\x & -1\end{array}\right]\hat{\matK}\left[\begin{array}{c}
\x\\
-1
\end{array}\right]\right)+\frac{\lambda}{2}\TNormS{\x}\right)\\
 & =\left[\begin{array}{cc}
\z_{\matA}(\eta)^{\conj} & \z_{\b}(\eta)\end{array}\right]\hat{\matK}^{-1}\left[\begin{array}{c}
\z_{\matA}(\eta)\\
\z_{\b}(\eta)^{*}
\end{array}\right]+\frac{1}{c_{\x}}\cdot\frac{\lambda}{2}\TNormS{\x}\\
 & \leq\lambda^{-1}(\|\z_{\matA}(\eta)\|_{2}^{2}+\left|\z_{\b}(\eta)\right|^{2})+\frac{1}{2}\\
 & \leq\lambda^{-1}(nM_{\matA}^{2}+M_{\b}^{2})+\frac{1}{2}
\end{align*}
}where $\hat{\matK}=\left[\begin{array}{cc}
\matA^{*}\matA & \matA^{*}\b\\
\b^{*}\matA & \b^{*}\b
\end{array}\right]+\lambda\matI_{n+1}$ and in the first inequality we use the Cauchy-Schwarz inequality.
Theorem \ref{thm:quad} yields
\[
\frac{\left|\int_{-1}^{1}f_{\x}(\eta)d\eta-\sum_{j=1}^{s}w_{j}^{2}f_{\x}(\eta_{j})\right|}{\int_{-1}^{1}f_{\x}(\eta)d\eta+\lambda}=\left|\int_{-1}^{1}g_{\x}(\eta)d\eta-\sum_{j=1}^{s}w_{j}^{2}g_{\x}(\eta_{j})\right|\leq\epsilon\,.
\]
\end{rem}

We can generalize the above theorem, which is specific for $\Omega=[-1,1]$,
to complex sets and/or high dimensional sets with a variety of probability
measures on them, as done in \cite{shustin2021gauss}.

\subsection{Numerical Example}

We illustrate both sampling approaches, randomized and quadrature,
on a small numerical example. Consider trying to approximate on $[-1,1]$
the Runge function using polynomial of degree 39. We use the Chebyshev
basis, i.e.
\[
\matA=\left[\begin{array}{cccc}
T_{0} & T_{1} & \dots & T_{39}\end{array}\right],\quad\b=\left[\frac{1}{1+25x^{2}}\right]
\]
with $\lambda=10^{-4}$.

The leftmost graph in Figure~\ref{fig:sampling_errors} shows the
ridge leverage density of $\left[\begin{array}{cc}
\matA & \b\end{array}\right]$, and compares it to the uniform density. We also plot the density
of the limiting distribution of Legendre nodes. We see very close
alignment between the ridge leverage score density and the density
of the Legendre nodes. We note that in this case $s_{\lambda}=39.99$.
In contrast $M_{\lambda}=798.28$, and thus we will need about 95\%
less samples when using leverage score sampling when compared to natural
sampling. However, even for $\epsilon=0.01$, the number of samples
required for randomized sampling is huge. In contrast, for $\epsilon=0.01$
only $s=73$ features are required using quadrature features. Nevertheless,
in the experiments we use $s=100$ for both randomized and quadrature
sampling.

The middle and rightmost graph in Figure~\ref{fig:sampling_errors}
shows the function approximation (on the left), and the error in approximating
the function (on the right). We use both natural sampling and leverage
score sampling, where we used inverse transform sampling for leverage
score sampling. With $s=100$, using quadrature sampling and leverage
score sampling we get small errors: the maximum absolute error is
$4.48\times10^{-4}$ for quadrature sampling, and $9.82\times10^{-4}$
for leverage score sampling. Natural sampling has large error near
the boundary of $[-1,1]$ (as expected), and the maximum absolute
error is $0.0581$.

\begin{figure}[H]
\begin{centering}
\begin{tabular}{ccc}
\includegraphics[width=0.3\textwidth]{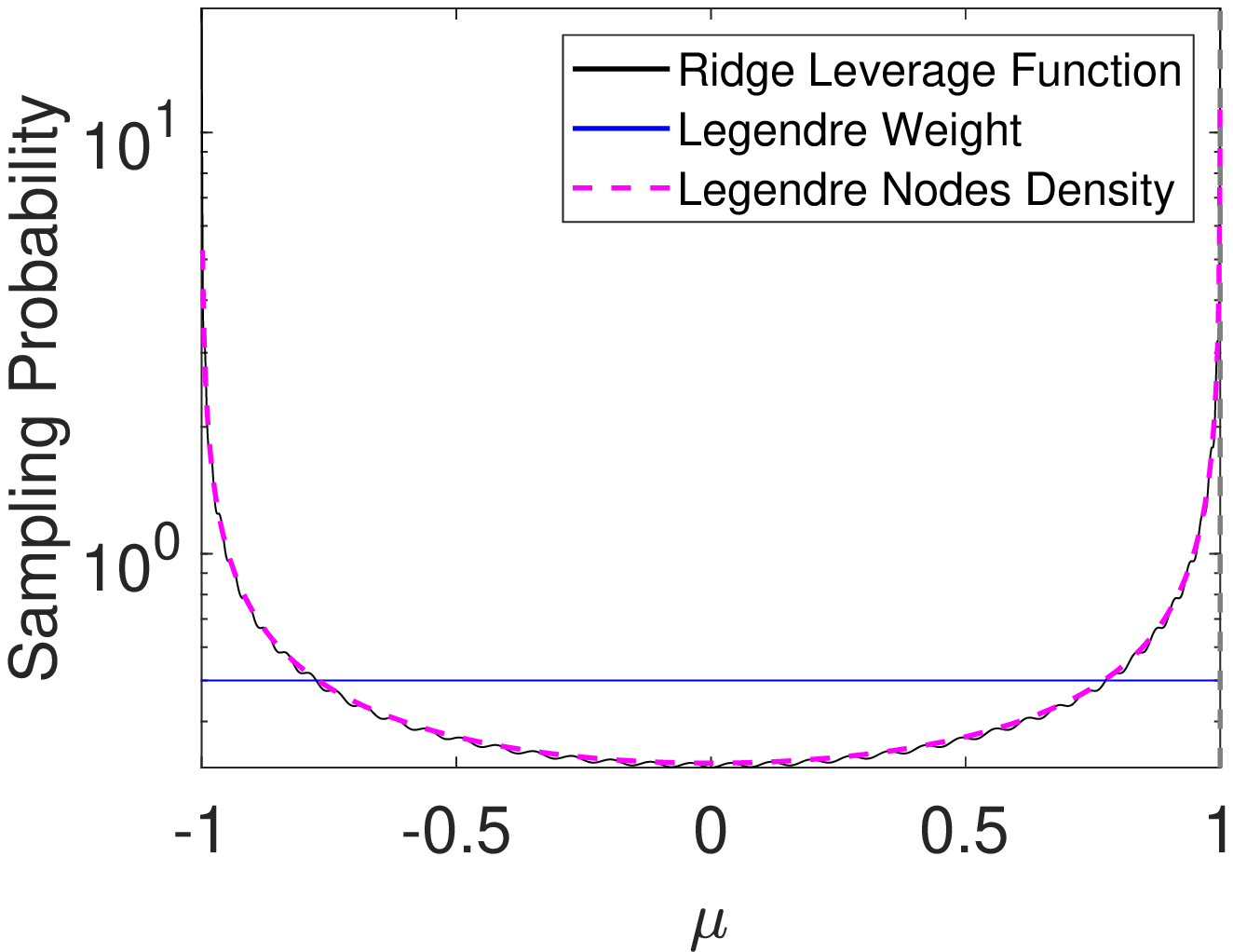} & \includegraphics[width=0.3\textwidth]{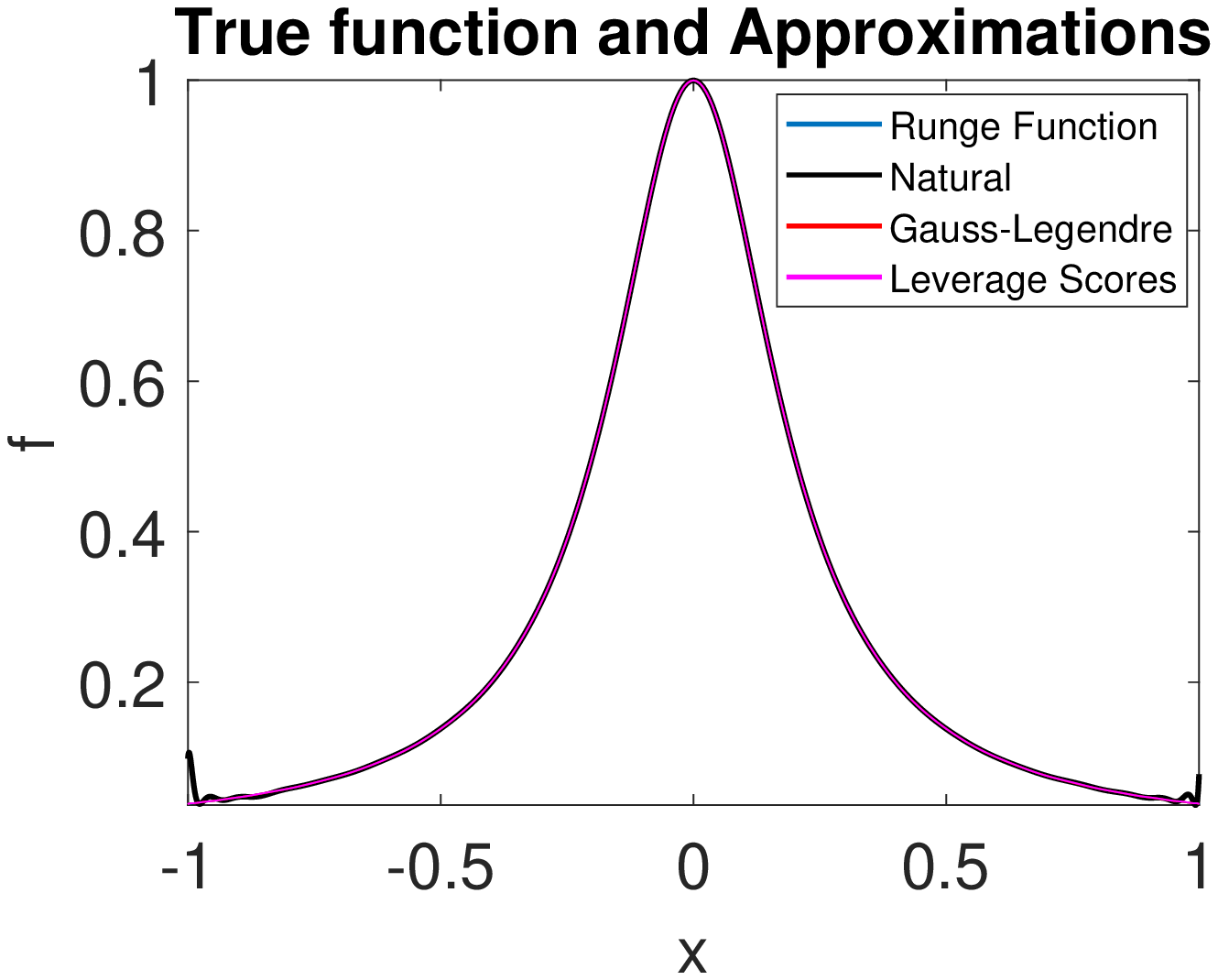} & \includegraphics[width=0.3\textwidth]{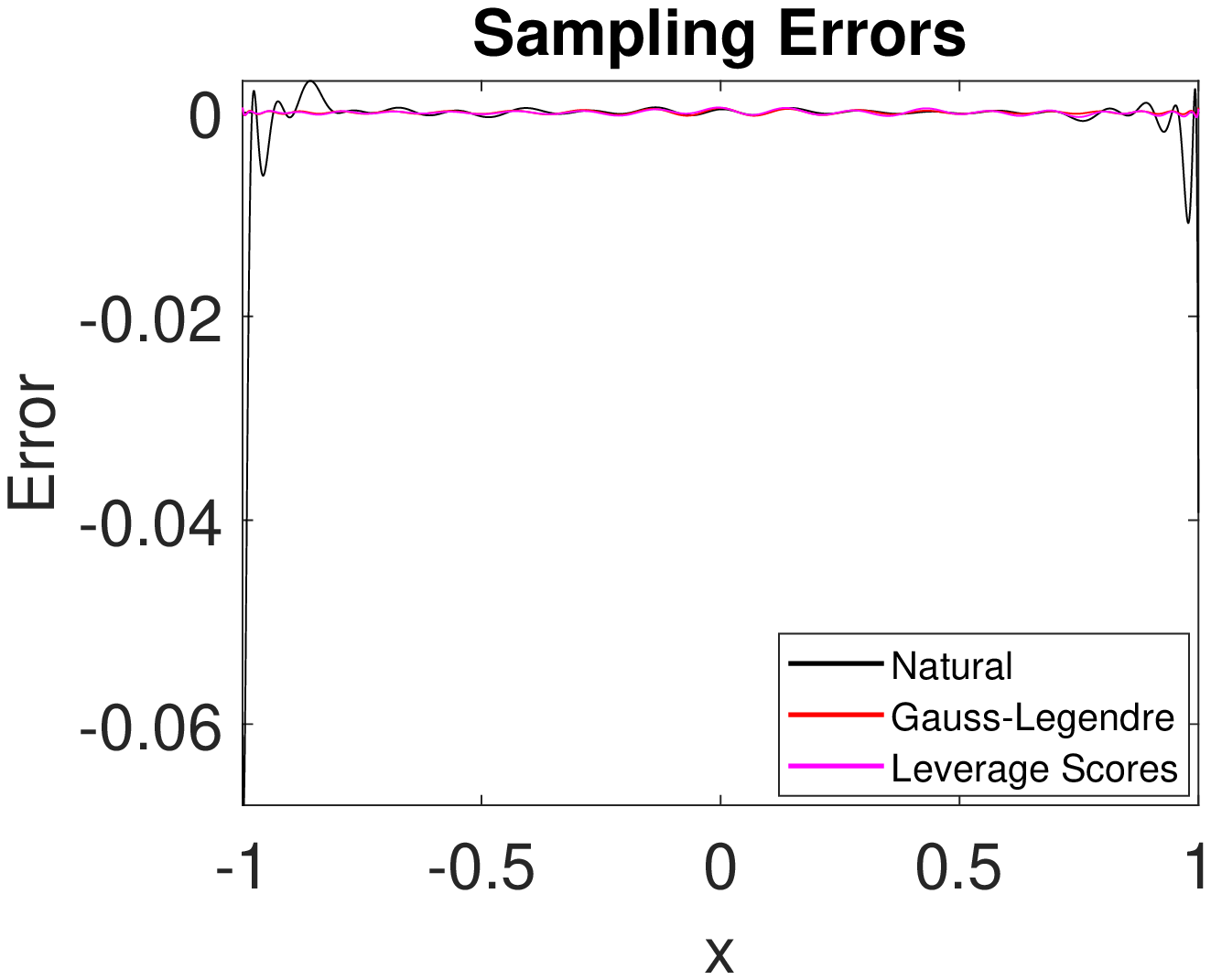}\tabularnewline
\end{tabular}
\par\end{centering}
\caption{\label{fig:sampling_errors} Numerical illustration: approximating
the Runge function using various sampling methods.}
\end{figure}

\section{Conclusions and Future Work}

In this paper, we gave an algebraic framework for working with quasimatrices
and explored the use of this framework to solve semi-infinite linear
regression problems, i.e. regression problems where the system's matrix
has an infinite amount of rows or columns. We discussed various applications,
such as function approximation and supervised learning (using kernel
ridge regression). We offered several classes of algorithms for solving
SILR problems: direct methods, iterative methods (generalizing known
iterative methods such as LSMR as an example of a Krylov subspace
method and SVRG as an example of a stochastic optimization method).
Finally, motivated by recent research on randomized numerical linear
algebra methods for solving finite linear least squares problems,
we explored the use of sampling techniques to approximate the solution
of a SILR, where sampling can be either randomized or deterministic.
Possible future directions are to further leverage advanced randomized
linear algebra methods, such as sketching, whereas the main challenge
is in how to generate a random quasimatrix from the correct distribution.
Another interesting idea is to generalize the Batson-Spielman-Srivastava
(BSS) process for iteratively building a spectral approximation of
a matrix using columns samples~\cite{BSS14} to quasimatrices.

\subsection*{Acknowledgements}

This research was supported by BSF grant 2017698.

\bibliographystyle{plain}
\bibliography{quasi}

\appendix

\section{\label{sec:SVRG_int}Stochastic Variance Reduced Gradient with Integrable
Sums}

The usual SVRG algorithm \cite{johnson2013accelerating,xiao2014proximal}
is defined for objective functions that have finite sum structure,
i.e. 
\begin{equation}
f(\x)=\frac{1}{n}\sum_{i=1}^{n}f_{i}(\x)\,.\label{eq:finite-sum}
\end{equation}
Here we propose a variant of the algorithm designed for objective
functions that can be written as an integral. Let $\mu$ be some probability
measure on a measurable index set, $\Omega$. Our variant of SVRG
is designed for functions than can be written as
\begin{equation}
f(\x)=\int_{\Omega}f_{\veta}(\x)d\mu(\veta)\label{eq:int-sum}
\end{equation}
where the integral should be interepted as a Lebesgue integral. Notice
that Eq.~(\ref{eq:finite-sum}) is a special case of Eq.~(\ref{eq:int-sum}):
$\Omega=\left\{ 1,\ldots,n\right\} $ and $\mu(A)=\frac{|A|}{n}$.
The proposed algorithm is summarized in Algorithm~\ref{alg:svrg_int}.

\begin{algorithm}[H]
\begin{algorithmic}[1]

\STATE \textbf{Inputs: }initial\textbf{ $\tilde{\x}_{0}$}, learning
rate $\alpha$, frequency $m$

\STATE \textbf{Iterate: }for $s=1,2,\ldots$

\STATE $\qquad$$\tilde{\x}=\tilde{\x}_{s-1}$

\STATE $\qquad$$\tilde{\mu}=\nabla\int_{\Omega}f_{\veta}(\tilde{\x})d\mu(\veta)=\nabla f(\tilde{\x})$

\STATE $\qquad$$\x_{0}=\tilde{\x}$

\STATE $\qquad$\textbf{Iterate: }for $k=1,2,\ldots,m$

\STATE $\qquad\qquad$sample $\veta_{k}$ according to the probability
of $\veta$ and update

\STATE $\qquad\qquad$$\x_{k}=\x_{k-1}-\alpha\left(\nabla f_{\veta_{k}}(\x_{k-1})-\nabla f_{\veta_{k}}(\tilde{\x})+\tilde{\mu}\right)$

\STATE \textbf{$\qquad$end}

\STATE \textbf{option I: }set $\tilde{\x}_{s}=\x_{m}$

\STATE \textbf{option II: }set $\tilde{\x}_{s}=\frac{1}{m}\sum_{k=1}^{m}\x_{k}$

\STATE \textbf{end}

\end{algorithmic}

\caption{\label{alg:svrg_int}SVRG for integrable objective functions.}
\end{algorithm}

As in common convex optimization, certain assumptions must be made
in order for the algorithm to converge. We prove that Algorithm \ref{alg:svrg_int}
converges and analyze the convergence rate, when the following assumptions
hold. We start with assumptions that are analogous to the assumptions
in finite sum SVRG, which we already mentioned in Section \ref{subsec:SVRG_SILR}.
\begin{assumption}
\label{assu:Lip-eta-appendix}For all $\veta\in\Omega$, $\nabla f_{\veta}(\x)$
is Lipschitz continuous, i.e., there exists $L_{\veta}>0$ such that
for all $\x,\y\in\mathbb{R}^{n}$ 
\[
\Vert\nabla f_{\veta}(\x)-\nabla f_{\veta}(\y)\Vert\leq L_{\veta}\Vert\x-\y\Vert\,.
\]
\end{assumption}

\begin{assumption}
\label{assu:strong-convex-appendix}Suppose that $f(\x)$ is strongly
convex, i.e., there exist $\gamma>0$ such that for all $\x,\y\in\mathbb{R}^{n}$
\[
f(\x)-f(\y)\geq\frac{\gamma}{2}\TNormS{\x-\y}+\nabla f(\boldsymbol{y})^{\T}(\x-\y)\,.
\]
\end{assumption}

Next, we list assumptions that trivially hold for the finite case
but are required for the continuous case.
\begin{assumption}
\label{assu:grad-int-appendix}The equality $\nabla f(\x)=\int_{\Omega}\nabla f_{\veta}(\x)d\mu(\veta)$
hold.
\end{assumption}

Suppose $\Omega=\mathbb{R}^{d}$ and $f_{\veta}(\x),\nabla f_{\veta}(\x)\in L_{1}(\Omega)$
with respect to $\eta$. Then, Assumption \ref{assu:grad-int-appendix}
holds from Leibniz integral rule.
\begin{assumption}
\label{assu:finite-L_eta-appendix}$L_{\sup}\coloneqq\sup_{\veta\in\Omega}L_{\veta}<\infty$.
\end{assumption}

Assumptions \ref{assu:grad-int-appendix} and \ref{assu:finite-L_eta-appendix}
imply that $\nabla f(\x)$ is Lipschitz continuous with Lipschitz
constant $L\leq L_{\sup}$. Note that for the finite sum case, Assumptions~\ref{assu:grad-int-appendix}
and \ref{assu:finite-L_eta-appendix} hold trivially, but this is
no longer the case in the integrable case.
\begin{cor}
If Assumptions \ref{assu:grad-int-appendix},\ref{assu:finite-L_eta-appendix}
hold, then we can make Assumptions \ref{assu:Lip-eta-appendix},\ref{assu:strong-convex-appendix}
hold for the continuous case.
\end{cor}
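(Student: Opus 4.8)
The plan is to observe that the corollary reduces to a single substantive claim: under Assumptions~\ref{assu:grad-int-appendix} and~\ref{assu:finite-L_eta-appendix}, the full gradient $\nabla f$ is Lipschitz continuous with a constant bounded by $L_{\sup}$, which is exactly the regularity of the averaged gradient that the finite-sum SVRG analysis relies on. The per-component Lipschitz property (Assumption~\ref{assu:Lip-eta-appendix}) and strong convexity of $f$ (Assumption~\ref{assu:strong-convex-appendix}) are hypotheses we carry over unchanged, so nothing needs to be derived for them; the content is entirely in transferring the Lipschitz-gradient ingredient from the finite sum to the integral, which in the finite case was automatic but in the integrable case is not.

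First I would use Assumption~\ref{assu:grad-int-appendix} to write, for arbitrary $\x,\y\in\R^{n}$,
\[
\nabla f(\x)-\nabla f(\y)=\int_{\Omega}\bigl(\nabla f_{\veta}(\x)-\nabla f_{\veta}(\y)\bigr)\,d\mu(\veta),
\]
interpreting the right-hand side as a Lebesgue integral of an $\R^{n}$-valued map. Then I would apply the elementary inequality $\bigl\|\int g\,d\mu\bigr\|\le\int\|g\|\,d\mu$ for vector-valued integrals, bound the integrand pointwise via Assumption~\ref{assu:Lip-eta-appendix}, and finally invoke Assumption~\ref{assu:finite-L_eta-appendix} together with the fact that $\mu$ is a probability measure:
\[
\bigl\|\nabla f(\x)-\nabla f(\y)\bigr\|\le\int_{\Omega}\bigl\|\nabla f_{\veta}(\x)-\nabla f_{\veta}(\y)\bigr\|\,d\mu(\veta)\le\int_{\Omega}L_{\veta}\,\|\x-\y\|\,d\mu(\veta)\le L_{\sup}\,\|\x-\y\|.
\]
This shows $\nabla f$ is Lipschitz with constant $L\le L_{\sup}$. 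Combined with Assumptions~\ref{assu:Lip-eta-appendix} and~\ref{assu:strong-convex-appendix}, we are then in exactly the situation of finite-sum SVRG (component gradients Lipschitz, averaged gradient Lipschitz, objective strongly convex), so the convergence argument for the integrable-sum variant can proceed as in the classical case.

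The step I expect to require the most care — though it is bookkeeping rather than a genuine difficulty — is justifying the measure-theoretic manipulations for a general measurable index set $\Omega$: that $\veta\mapsto\nabla f_{\veta}(\x)-\nabla f_{\veta}(\y)$ is measurable (so its integral is well defined) and that the scalar integrand $\veta\mapsto\bigl\|\nabla f_{\veta}(\x)-\nabla f_{\veta}(\y)\bigr\|$ is measurable and integrable. Both are immediate once we note that Assumption~\ref{assu:grad-int-appendix} already presupposes integrability of $\veta\mapsto\nabla f_{\veta}(\x)$, that the norm of a measurable function is measurable, and that this integrand is dominated by the constant $L_{\sup}\|\x-\y\|<\infty$; hence the norm-of-integral inequality applies and the chain of bounds above is valid. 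Note also that measurability of $\veta\mapsto L_{\veta}$ is never needed, since only the pointwise bound from Assumption~\ref{assu:Lip-eta-appendix} enters, followed immediately by the uniform bound from Assumption~\ref{assu:finite-L_eta-appendix}.
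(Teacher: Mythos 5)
Your proposal is correct and matches the paper's (implicit) argument: the paper gives no explicit proof of this corollary, merely asserting in the sentence preceding it that Assumptions \ref{assu:grad-int-appendix} and \ref{assu:finite-L_eta-appendix} imply Lipschitz continuity of $\nabla f$ with constant $L\le L_{\sup}$, which is exactly the chain of inequalities you supply via the vector-valued triangle inequality for integrals. Your reading of Assumptions \ref{assu:Lip-eta-appendix} and \ref{assu:strong-convex-appendix} as hypotheses carried over unchanged, and your added care about measurability, are both consistent with the paper's intent.
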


We now analyze Algorithm~\ref{assu:grad-int-appendix}. The analysis
follows the analysis in \cite{johnson2013accelerating,xiao2014proximal}
quite closely, making adjustments where necessary for integrals instead
of sums, and using the additional assumptions when needed.
\begin{lem}
\label{lem:L2_grad}Suppose Assumptions \ref{assu:Lip-eta-appendix},\ref{assu:grad-int-appendix},\ref{assu:finite-L_eta-appendix}
hold. Let $\x^{\star}=\arg\,\min_{\x}f(\x)$ and $L_{\sup}=\sup_{\veta\in\Omega}L_{\veta}$.
Then
\[
\int_{\Omega}\TNormS{\nabla f_{\veta}(\x)-\nabla f_{\veta}(\x^{\star})}d\mu(\veta)\leq2L_{\sup}\left(f(\x)-f(\x^{\star})\right)\,.
\]
\end{lem}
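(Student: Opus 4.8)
The plan is to mimic the classical SVRG argument of Johnson--Zhang, replacing the finite average by a Lebesgue integral and invoking the extra assumptions exactly where the finite-sum case gets them for free.

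First I would argue $\veta$ by $\veta$. For each fixed $\veta\in\Omega$ the function $f_{\veta}$ is convex (in the SILR instances of interest it is a nonnegative quadratic plus $\tfrac{\lambda}{2}\TNormS{\x}$; in the abstract statement this really should appear as an explicit hypothesis) and, by Assumption~\ref{assu:Lip-eta-appendix}, $L_{\veta}$-smooth. For such a function one has the standard strengthening of the gradient inequality
\[
f_{\veta}(\x)\ \ge\ f_{\veta}(\x^{\star})+\nabla f_{\veta}(\x^{\star})^{\T}(\x-\x^{\star})+\frac{1}{2L_{\veta}}\TNormS{\nabla f_{\veta}(\x)-\nabla f_{\veta}(\x^{\star})},
\]
which follows by applying the descent lemma to a single gradient step started at $\x$ and using that, by convexity, the resulting critical point is a global minimizer. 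Since $L_{\veta}\le L_{\sup}$ by Assumption~\ref{assu:finite-L_eta-appendix}, this yields, for every $\veta\in\Omega$,
\[
\frac{1}{2L_{\sup}}\TNormS{\nabla f_{\veta}(\x)-\nabla f_{\veta}(\x^{\star})}\ \le\ f_{\veta}(\x)-f_{\veta}(\x^{\star})-\nabla f_{\veta}(\x^{\star})^{\T}(\x-\x^{\star}).
\]

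Second I would integrate both sides against $d\mu(\veta)$. By linearity of the Lebesgue integral the right-hand side becomes $\int_{\Omega}f_{\veta}(\x)\,d\mu-\int_{\Omega}f_{\veta}(\x^{\star})\,d\mu-\bigl(\int_{\Omega}\nabla f_{\veta}(\x^{\star})\,d\mu\bigr)^{\T}(\x-\x^{\star})$, which by Eq.~(\ref{eq:int-sum}) and Assumption~\ref{assu:grad-int-appendix} equals $f(\x)-f(\x^{\star})-\nabla f(\x^{\star})^{\T}(\x-\x^{\star})$. Since $\x^{\star}=\arg\min_{\x}f(\x)$ and $f$ is differentiable (its gradient exists by Assumption~\ref{assu:grad-int-appendix}), first-order optimality gives $\nabla f(\x^{\star})=0$, so the right-hand side collapses to $f(\x)-f(\x^{\star})$. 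Multiplying through by $2L_{\sup}$ gives the lemma.

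The descent-lemma computation and the algebra are routine; the two points that need care are the following. First, convexity of each $f_{\veta}$ is genuinely needed for the per-$\veta$ inequality: a finite counterexample with $f_{1}(x)=2x^{2}$, $f_{2}(x)=-x^{2}$ (uniform $\mu$) satisfies Assumptions~\ref{assu:Lip-eta-appendix}, \ref{assu:strong-convex-appendix}, \ref{assu:grad-int-appendix}, \ref{assu:finite-L_eta-appendix} yet violates the conclusion, so convexity of $f_{\veta}$ must be available --- it is in every SILR setting considered here. Second, one must know that $\int_{\Omega}f_{\veta}(\x)\,d\mu$, $\int_{\Omega}f_{\veta}(\x^{\star})\,d\mu$ and $\int_{\Omega}\nabla f_{\veta}(\x^{\star})\,d\mu$ are all finite so that splitting the integral is legitimate; this is exactly what the integrability built into the model (and Assumption~\ref{assu:grad-int-appendix}) provides, and in the concrete overdetermined SILR case it reduces to $\int_{\Omega}\TNormS{\z_{\matA}(\veta)}\,d\mu(\veta)<\infty$ together with $\z_{\b}\in L_{2}(\Omega,d\mu)$.
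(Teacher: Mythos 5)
Your proof is correct and follows essentially the same route as the paper's: a per-$\veta$ application of the descent lemma to $g_{\veta}(\x)=f_{\veta}(\x)-f_{\veta}(\x^{\star})-\nabla f_{\veta}(\x^{\star})^{\T}(\x-\x^{\star})$, giving $\TNormS{\nabla f_{\veta}(\x)-\nabla f_{\veta}(\x^{\star})}\le 2L_{\veta}\,g_{\veta}(\x)$, followed by integration over $\Omega$, Assumption~\ref{assu:grad-int-appendix}, $L_{\veta}\le L_{\sup}$, and $\nabla f(\x^{\star})=0$. Your side remark is well taken: the paper's step asserting that $\nabla g_{\veta}(\x^{\star})=0$ implies $\x^{\star}=\arg\min_{\x}g_{\veta}(\x)$ tacitly uses convexity of each $f_{\veta}$, which is not among the stated assumptions but is genuinely needed (your $f_{1}(x)=2x^{2}$, $f_{2}(x)=-x^{2}$ example falsifies the conclusion without it) and does hold in every SILR instance treated in the paper.
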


\begin{proof}
Given any $\veta\in\Omega$, let 
\[
g_{\veta}(\x)=f_{\veta}(\x)-f_{\veta}(\x^{\star})-\nabla f_{\veta}(\x^{\star})^{T}(\x-\x^{\star})\,.
\]
It can be seen that $\nabla g_{\veta}(\x^{\star})=0$, and hence $\x^{\star}=\arg\min_{\x}g_{\veta}(\x)$.
Moreover, from Assumption \ref{assu:Lip-eta-appendix}, $\nabla g_{\veta}(\x)=\nabla f_{\veta}(\x)-\nabla f_{\veta}(\x^{\star})$
is Lipschitz continuous with constant $L_{\veta}$. This yields

\[
g_{\veta}(\x)-g_{\veta}(\y)\leq\frac{L_{\veta}}{2}\TNormS{\x-\y}+\nabla g_{\veta}(\boldsymbol{y})^{\T}(\x-\y)
\]
for any $\x,\y\in\mathbb{R}^{n}$ (see \cite[Lemma 1.2.3]{nesterov2003introductory}).
Replacing $\x$ with $\x-\frac{1}{L_{\veta}}\nabla g_{\veta}(\x)$
and $\y$ with $\x$, gives
\[
g_{\veta}\left(\x-\frac{1}{L_{\veta}}\nabla g_{\veta}(\x)\right)\leq g_{\veta}(\x)-\frac{1}{2L_{\veta}}\TNormS{\nabla g_{\veta}(\x)}\,.
\]
Since $\min_{\x}g_{\veta}(\x)=g_{\veta}(\x^{\star})=0$, we have $0\leq g_{\veta}(\x-\nabla g_{\veta}(\x)/L_{\veta})$,
which implies
\begin{equation}
\frac{1}{2L_{\veta}}\TNormS{\nabla g_{\veta}(\x)}\leq g_{\veta}(\x)\,.\label{eq:Lip_result_g}
\end{equation}
Substituting the definition of $g$ gives 
\[
\TNormS{\nabla f_{\veta}(\x)-\nabla f_{\veta}(\x^{\star})}\leq2L_{\veta}\left(f_{\veta}(\x)-f_{\veta}(\x^{\star})-\nabla f_{\veta}(\x^{\star})^{T}(\x-\x^{\star})\right)\,.
\]

Now, by taking an integral over $\Omega$, we have

\begin{align*}
\int_{\Omega}\TNormS{\nabla f_{\veta}(\x)-\nabla f_{\veta}(\x^{\star})}d\mu(\veta) & \leq2L_{\sup}\int_{\Omega}f_{\veta}(\x)-f_{\veta}(\x^{\star})-\nabla f_{\veta}(\x^{\star})^{\T}(\x-\x^{\star})d\mu(\veta)\\
 & =2L_{\sup}\left(f(\x)-f(\x^{\star})-\nabla f(\x^{\star})^{\T}(\x-\x^{\star})\right)=2L_{\sup}\left(f(\x)-f(\x^{\star})\right)
\end{align*}
where in the first inequality we use Assumption \ref{assu:finite-L_eta-appendix},
in the second equality we use Assumption \ref{assu:grad-int-appendix}
and the last equality is due to the fact that $\nabla f(\x^{\star})=0$.
\end{proof}
\begin{cor}
\label{cor:grad_differ}Denote $\v_{k}=\nabla f_{\veta_{k}}(\x_{k-1})-\nabla f_{\veta_{k}}(\tilde{\x})+\tilde{\mu}$.
Then, conditioned on $\x_{k-1}$ we have 
\[
\mathbb{E}\TNormS{\v_{k}}\leq4L_{\sup}\left(f(\x_{k-1})-2f(\x^{\star})+f(\tilde{\x})\right)\,.
\]
\end{cor}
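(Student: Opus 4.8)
The plan is to run the standard SVRG variance-reduction estimate, with the finite sum replaced by an integral against $\mu$, using Lemma~\ref{lem:L2_grad} to control each of the two terms that come up. Throughout, all expectations are conditional on $\x_{k-1}$ (and on $\tilde{\x}$, which is frozen during the inner loop), so the only randomness is the fresh draw $\veta_{k}\sim\mu$, i.e. $\mathbb{E}[\cdot]=\int_{\Omega}(\cdot)\,d\mu(\veta_{k})$.

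First I would record two elementary facts. (i) Since $\tilde{\mu}=\nabla f(\tilde{\x})$ and $\nabla f(\x^{\star})=0$ (as $\x^{\star}=\arg\min f$), Assumption~\ref{assu:grad-int-appendix}, which permits exchanging $\nabla$ and $\int$, gives
\[
\tilde{\mu}=\nabla f(\tilde{\x})-\nabla f(\x^{\star})=\int_{\Omega}\left(\nabla f_{\veta}(\tilde{\x})-\nabla f_{\veta}(\x^{\star})\right)d\mu(\veta)=\mathbb{E}\left[\nabla f_{\veta_{k}}(\tilde{\x})-\nabla f_{\veta_{k}}(\x^{\star})\right]\,.
\]
(ii) For any square-integrable random vector $X$ one has $\mathbb{E}\|X-\mathbb{E}X\|_{2}^{2}=\mathbb{E}\|X\|_{2}^{2}-\|\mathbb{E}X\|_{2}^{2}\leq\mathbb{E}\|X\|_{2}^{2}$.

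Next I would split $\v_{k}$ around the common reference point $\nabla f_{\veta_{k}}(\x^{\star})$:
\[
\v_{k}=\left(\nabla f_{\veta_{k}}(\x_{k-1})-\nabla f_{\veta_{k}}(\x^{\star})\right)-\left(\left(\nabla f_{\veta_{k}}(\tilde{\x})-\nabla f_{\veta_{k}}(\x^{\star})\right)-\tilde{\mu}\right)\,.
\]
By fact (i), the second bracketed term equals $X-\mathbb{E}X$ with $X=\nabla f_{\veta_{k}}(\tilde{\x})-\nabla f_{\veta_{k}}(\x^{\star})$. Applying $\|\u+\w\|_{2}^{2}\leq2\|\u\|_{2}^{2}+2\|\w\|_{2}^{2}$, taking expectations, and using fact (ii) on the second term, I obtain
\[
\mathbb{E}\TNormS{\v_{k}}\leq2\,\mathbb{E}\TNormS{\nabla f_{\veta_{k}}(\x_{k-1})-\nabla f_{\veta_{k}}(\x^{\star})}+2\,\mathbb{E}\TNormS{\nabla f_{\veta_{k}}(\tilde{\x})-\nabla f_{\veta_{k}}(\x^{\star})}\,.
\]
Each of the two expectations on the right is an integral of exactly the form bounded in Lemma~\ref{lem:L2_grad}, hence is at most $2L_{\sup}\left(f(\x_{k-1})-f(\x^{\star})\right)$ and $2L_{\sup}\left(f(\tilde{\x})-f(\x^{\star})\right)$ respectively; summing yields $\mathbb{E}\TNormS{\v_{k}}\leq4L_{\sup}\left(f(\x_{k-1})-2f(\x^{\star})+f(\tilde{\x})\right)$, as claimed.

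There is no substantive obstacle here: this is the textbook SVRG variance bound. The two points needing care are (a) invoking Assumption~\ref{assu:grad-int-appendix} to legitimately represent $\tilde{\mu}$ as an expectation in the integrable setting, which is precisely what makes the second bracketed term centered so that fact (ii) discards its mean, and (b) keeping the conditioning straight, since the statement is conditional on $\x_{k-1}$ while $\tilde{\x}$ is fixed. Everything else reduces to the two elementary inequalities above together with Lemma~\ref{lem:L2_grad}.
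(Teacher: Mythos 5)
Your proof is correct and follows essentially the same route as the paper's: the identical add-and-subtract of $\nabla f_{\veta_{k}}(\x^{\star})$, the inequality $\TNormS{\u+\w}\leq2\TNormS{\u}+2\TNormS{\w}$, the variance bound $\mathbb{E}\TNormS{X-\mathbb{E}X}\leq\mathbb{E}\TNormS{X}$ applied to the centered term, and Lemma~\ref{lem:L2_grad} on each of the two resulting expectations. Your explicit invocation of Assumption~\ref{assu:grad-int-appendix} to justify writing $\tilde{\mu}$ as the expectation of $\nabla f_{\veta_{k}}(\tilde{\x})-\nabla f_{\veta_{k}}(\x^{\star})$ is the same step the paper performs, just stated a bit more carefully.
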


\begin{proof}
Conditioned on $\x_{k-1}$, taking expectation with respect to $\veta_{k}$
gives $\mathbb{E}\left[\nabla f_{\eta_{k}}(\x_{k-1})\right]=\nabla f(\x_{k-1})$.
Similarly, $\mathbb{E}\left[\nabla f_{\veta_{k}}(\tilde{\x})\right]=\nabla f(\tilde{\x})$.
Therefore

\begin{equation}
\mathbb{E}\left[\v_{k}\right]=\mathbb{E}\left[\nabla f_{\veta_{k}}(\x_{k-1})-\nabla f_{\veta_{k}}(\tilde{\x})+\tilde{\mu}\right]=\nabla f(\x_{k-1})\,.\label{eq:vk_grad}
\end{equation}

Now,
\begin{align*}
\mathbb{E}\TNormS{\v_{k}} & =\mathbb{E}\TNormS{\nabla f_{\veta_{k}}(\x_{k-1})-\nabla f_{\veta_{k}}(\tilde{\x})+\tilde{\mu}+\nabla f_{\veta_{k}}(\x^{\star})-\nabla f_{\veta_{k}}(\x^{\star})}\\
 & \leq2\mathbb{E}\TNormS{\nabla f_{\veta_{k}}(\x_{k-1})-\nabla f_{\veta_{k}}(\x^{\star})}+2\mathbb{E}\TNormS{\nabla f_{\veta_{k}}(\x^{\star})-\nabla f_{\veta_{k}}(\tilde{\x})+\tilde{\mu}}\\
 & =2\mathbb{E}\TNormS{\nabla f_{\veta_{k}}(\x_{k-1})-\nabla f_{\veta_{k}}(\x^{\star})}+2\mathbb{E}\TNormS{\nabla f_{\veta_{k}}(\tilde{\x})-\nabla f_{\veta_{k}}(\x^{\star})-\mathbb{E}\left[\nabla f_{\veta_{k}}(\tilde{\x})-\nabla f_{\veta_{k}}(\x^{\star})\right]}\\
 & \leq2\mathbb{E}\TNormS{\nabla f_{\veta_{k}}(\x_{k-1})-\nabla f_{\veta_{k}}(\x^{\star})}+2\mathbb{E}\TNormS{\nabla f_{\veta_{k}}(\tilde{\x})-\nabla f_{\veta_{k}}(\x^{\star})}\\
 & \leq4L_{\sup}\left(f(\x_{k-1})-2f(\x^{\star})+f(\tilde{\x})\right)
\end{align*}

where in the first inequality we use $\Vert a+b\Vert^{2}\leq2(\Vert a\Vert^{2}+\Vert b\Vert^{2})$.
The second equality uses $\mathbb{E}\left[\nabla f_{\veta_{k}}(\tilde{\x})\right]=\tilde{\mu},\,\,\mathbb{E}\left[\nabla f_{\veta_{k}}(\x^{\star})\right]=\nabla f(\x^{\star})=0$.
The second inequality uses the fact that for any $\xi\in\mathbb{R}^{d}$:
$\mathbb{E}\TNormS{\xi-\mathbb{E}\xi}=\mathbb{E}\TNormS{\xi}-\TNormS{\mathbb{E}\xi}\leq\mathbb{E}\TNormS{\xi}$.
In the last inequality we use Lemma \ref{lem:L2_grad}.
\end{proof}
Now we can proceed to prove the main theorem.
\begin{thm}
\label{thm:converge_proof} Suppose Assumptions \ref{assu:strong-convex-appendix},\ref{assu:grad-int-appendix},\ref{assu:finite-L_eta-appendix}
hold, and let $\x^{\star}=\arg\,\min_{\x}f(\x)$ and $L_{\sup}=\sup_{\veta\in\Omega}L_{\veta}$.
In addition, assume that there exists $0<\alpha<\frac{1}{2L_{\sup}}$
a sufficiently large $m$ such that

\[
\rho=\frac{1}{\gamma\alpha(1-2L_{\sup}\alpha)m}+\frac{2L_{\sup}\alpha}{(1-2L_{\sup}\alpha)}<1\,.
\]
Then SVRG (Algorithm \ref{alg:svrg_int}) with option II has geometric
convergence in expectation:

\[
\mathbb{E}\left[f(\tilde{\x}_{s})\right]-f(\x^{\star})\leq\rho^{s}\left(f(\tilde{\x}_{0})-f(\x^{\star})\right)\,.
\]
\end{thm}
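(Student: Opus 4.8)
The plan is to follow the classical SVRG analysis of \cite{johnson2013accelerating,xiao2014proximal}, replacing the finite sum over indices $i\in[n]$ by a Lebesgue integral over $\veta\in\Omega$; all the measure-theoretic facts needed to make this replacement legitimate have already been isolated as Assumptions~\ref{assu:grad-int-appendix} and \ref{assu:finite-L_eta-appendix}, and their main consequence, the variance bound of Corollary~\ref{cor:grad_differ}, is the workhorse of the argument. Fix an outer iteration $s$ and write $\tilde{\x}=\tilde{\x}_{s-1}$, $\tilde{\mu}=\nabla f(\tilde{\x})$, and, as in Corollary~\ref{cor:grad_differ}, $\v_{k}=\nabla f_{\veta_{k}}(\x_{k-1})-\nabla f_{\veta_{k}}(\tilde{\x})+\tilde{\mu}$, so that the inner update is $\x_{k}=\x_{k-1}-\alpha\v_{k}$.

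First I would expand, conditioning on $\x_{k-1}$ and taking expectation over $\veta_{k}$,
\[
\mathbb{E}\TNormS{\x_{k}-\x^{\star}}=\TNormS{\x_{k-1}-\x^{\star}}-2\alpha\,\mathbb{E}[\v_{k}]^{\T}(\x_{k-1}-\x^{\star})+\alpha^{2}\,\mathbb{E}\TNormS{\v_{k}}\,.
\]
By Eq.~(\ref{eq:vk_grad}) we have $\mathbb{E}[\v_{k}]=\nabla f(\x_{k-1})$, so the middle term equals $-2\alpha\nabla f(\x_{k-1})^{\T}(\x_{k-1}-\x^{\star})$, which by convexity of $f$ (a consequence of Assumption~\ref{assu:strong-convex-appendix}) is at most $-2\alpha\bigl(f(\x_{k-1})-f(\x^{\star})\bigr)$; the last term is controlled by Corollary~\ref{cor:grad_differ}. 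Collecting these, and using that $1-2L_{\sup}\alpha>0$ by the hypothesis $\alpha<1/(2L_{\sup})$, gives
\[
\mathbb{E}\TNormS{\x_{k}-\x^{\star}}\le\TNormS{\x_{k-1}-\x^{\star}}-2\alpha(1-2L_{\sup}\alpha)\bigl(f(\x_{k-1})-f(\x^{\star})\bigr)+4L_{\sup}\alpha^{2}\bigl(f(\tilde{\x})-f(\x^{\star})\bigr)\,.
\]

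Next I would take total expectation, sum over $k=1,\dots,m$, and telescope the $\TNormS{\x_{k}-\x^{\star}}$ terms. Using $\x_{0}=\tilde{\x}$ and discarding the nonnegative $\mathbb{E}\TNormS{\x_{m}-\x^{\star}}$ yields
\[
2\alpha(1-2L_{\sup}\alpha)\sum_{k=1}^{m}\mathbb{E}\bigl[f(\x_{k-1})-f(\x^{\star})\bigr]\le\TNormS{\tilde{\x}-\x^{\star}}+4mL_{\sup}\alpha^{2}\,\mathbb{E}\bigl[f(\tilde{\x})-f(\x^{\star})\bigr]\,.
\]
Strong convexity (Assumption~\ref{assu:strong-convex-appendix}) applied at the minimizer, together with $\nabla f(\x^{\star})=0$, gives $\TNormS{\tilde{\x}-\x^{\star}}\le\frac{2}{\gamma}\bigl(f(\tilde{\x})-f(\x^{\star})\bigr)$, while for option~II Jensen's inequality gives $f(\tilde{\x}_{s})\le\frac{1}{m}\sum_{k=1}^{m}f(\x_{k-1})$. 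Substituting both and dividing by $2\alpha(1-2L_{\sup}\alpha)m$ produces
\[
\mathbb{E}\bigl[f(\tilde{\x}_{s})-f(\x^{\star})\bigr]\le\left(\frac{1}{\gamma\alpha(1-2L_{\sup}\alpha)m}+\frac{2L_{\sup}\alpha}{1-2L_{\sup}\alpha}\right)\mathbb{E}\bigl[f(\tilde{\x}_{s-1})-f(\x^{\star})\bigr]=\rho\,\mathbb{E}\bigl[f(\tilde{\x}_{s-1})-f(\x^{\star})\bigr]\,,
\]
and iterating over $s$ (with the convention $f(\tilde{\x}_{0})$ deterministic) gives the claimed geometric rate.

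The algebraic steps are essentially identical to the finite-sum proof, so I do not anticipate a genuine obstacle there; the one point that truly needs the integrable generalization is the interchange of the expectation over $\veta_{k}$ with the defining integral of $\nabla f$ and with $\TNormS{\cdot}$ — precisely what Assumptions~\ref{assu:grad-int-appendix} and \ref{assu:finite-L_eta-appendix} buy us, and which is already exploited inside Lemma~\ref{lem:L2_grad} and Corollary~\ref{cor:grad_differ}. A minor bookkeeping issue is the off-by-one in option~II (the averaged iterate is $\frac1m\sum_{k=1}^{m}\x_{k}$ whereas the telescoped bound naturally involves $\x_{0},\dots,\x_{m-1}$); as in \cite{johnson2013accelerating} this is harmless, e.g. by noting $f(\x_{0})=f(\tilde{\x})$ so that a single shifted term only changes constants in $\rho$, or by restating option~II with the matching index range.
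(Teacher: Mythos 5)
Your argument is essentially the classical finite-sum SVRG analysis transplanted to the integral setting, which is also the paper's strategy: the expansion of $\mathbb{E}\TNormS{\x_{k}-\x^{\star}}$, the use of $\mathbb{E}[\v_{k}]=\nabla f(\x_{k-1})$, the variance bound of Corollary~\ref{cor:grad_differ}, the telescoping over the inner loop, and the strong-convexity and Jensen steps at the end all match. The one genuine divergence is how the per-iteration function-value term is produced. You bound $-2\alpha\nabla f(\x_{k-1})^{\T}(\x_{k-1}-\x^{\star})\leq-2\alpha\left(f(\x_{k-1})-f(\x^{\star})\right)$ by plain convexity, so your telescoped sum controls $\sum_{k=1}^{m}\mathbb{E}\left[f(\x_{k-1})\right]$, i.e.\ the average over $\x_{0},\dots,\x_{m-1}$. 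The paper instead combines that convexity inequality with a second one along the update direction, $f(\x_{k})-f(\x_{k-1})\geq-\alpha\mathbb{E}[\v_{k}]^{\T}\v_{k}$, so as to obtain a bound featuring $f(\x_{k})$ rather than $f(\x_{k-1})$, together with an extra $-2\alpha^{2}\TNormS{\mathbb{E}[\v_{k}]}$ term that is absorbed later using $2L_{\sup}\alpha<1$; this makes the telescoped sum run over $f(\x_{1}),\dots,f(\x_{m})$ and hence match option~II exactly as written in Algorithm~\ref{alg:svrg_int}.

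This is also the one place where your write-up is too quick. You flag the off-by-one but propose to repair it by ``noting $f(\x_{0})=f(\tilde{\x})$ so that a single shifted term only changes constants in $\rho$.'' That patch is aimed at the wrong term: to pass from the sum over $k=0,\dots,m-1$ (which you control) to the sum over $k=1,\dots,m$ (which option~II requires) you must \emph{add} $\mathbb{E}\left[f(\x_{m})\right]-f(\x^{\star})$ and subtract $f(\x_{0})-f(\x^{\star})$; the subtraction is harmless, but the added $\mathbb{E}\left[f(\x_{m})\right]$ term is not controlled by anything you have retained, since you already discarded $\mathbb{E}\TNormS{\x_{m}-\x^{\star}}\geq0$. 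The clean fixes are either to restate option~II over $k=0,\dots,m-1$ (your second suggestion, and effectively what Johnson--Zhang do by returning a uniformly random inner iterate), or to use the paper's device: taking conditional expectation in $f(\x_{k})-f(\x_{k-1})\geq-\alpha\mathbb{E}[\v_{k}]^{\T}\v_{k}$ gives $\mathbb{E}\left[f(\x_{k-1})\right]\leq\mathbb{E}\left[f(\x_{k})\right]+\alpha\TNormS{\mathbb{E}[\v_{k}]}$, and the resulting $4L_{\sup}\alpha^{3}\TNormS{\mathbb{E}[\v_{k}]}$ surplus is cancelled against the retained $-2\alpha^{2}\TNormS{\mathbb{E}[\v_{k}]}$ term because $2L_{\sup}\alpha<1$. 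With either repair your derivation yields exactly the stated $\rho$ and the claimed geometric rate.
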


\begin{proof}
From Assumption~\ref{assu:strong-convex-appendix}, and using Eq.~
(\ref{eq:vk_grad}) we have
\begin{align}
f(\x^{\star})-f(\x_{k-1}) & \geq-\nabla f(\x_{k-1})^{\T}(\x_{k-1}-\x^{\star})\label{eq:strong_conv_x_k}\\
f(\x_{k})-f(\x_{k-1}) & \geq-\alpha\mathbb{E}\left[\v_{k}\right]^{\T}\v_{k}\,.\nonumber 
\end{align}
Subtracting these inequalities yields
\begin{equation}
-\nabla f(\x_{k-1})^{\T}(\x_{k-1}-\x^{\star})\leq f(\x^{\star})-f(\x_{k})-\alpha\mathbb{E}\left[\v_{k}\right]^{\T}\v_{k}\,.\label{eq:grad_k-1}
\end{equation}

Thus, we have
\begin{align*}
\mathbb{E}\TNormS{\x_{k}-\x^{\star}} & =\mathbb{E}\TNormS{\x_{k-1}-\alpha\v_{k}-\x^{\star}}\\
 & =\TNormS{\x_{k-1}-\x^{\star}}-2\alpha\nabla f(\x_{k-1})^{\T}(\x_{k-1}-\x^{\star})+\alpha^{2}\mathbb{E}\TNormS{\v_{k}}\\
 & \leq\TNormS{\x_{k-1}-\x^{\star}}-2\alpha\left(f(\x_{k})-f(\x^{\star})\right)-2\alpha^{2}\mathbb{E}\left[\v_{k}\right]^{\T}\v_{k}\\
 & +4L_{\sup}\alpha^{2}\left(f(\x_{k-1})-2f(\x^{\star})+f(\tilde{\x})\right)
\end{align*}
where the equality uses Eq.~(\ref{eq:vk_grad}) and the inequality
uses Eq.~(\ref{eq:grad_k-1}) and Corollary \ref{cor:grad_differ}.

Now, consider a fixed stage $s$, such that $\x_{0}=\tilde{\x}=\tilde{\x}_{s-1}$
and $\tilde{\x}_{s}=\frac{1}{m}\sum_{k=1}^{m}\x_{k}$. By summing
the previous inequality over $k=1,\ldots,m$ and taking expectation
with respect to the history of the random variables $\eta_{1},\ldots,\eta_{m}$,
we obtain
\begin{align*}
\mathbb{E}\Vert\x_{m}-\x^{\star}\Vert^{2} & \leq\TNormS{\x_{0}-\x^{\star}}-2\alpha\sum_{k=1}^{m}\left(\mathbb{E}\left[f(\x_{k})\right]-f(\x^{\star})\right)-2\alpha^{2}\sum_{k=1}^{m}\TNormS{\mathbb{E}\left[\v_{k}\right]}\\
 & +4L_{\sup}\alpha^{2}\sum_{k=1}^{m}\left(\mathbb{E}\left[f(\x_{k-1})\right]-f(\x^{\star})\right)+4L_{\sup}\alpha^{2}m\left(f(\tilde{\x})-f(\x^{\star})\right)\\
 & \leq\TNormS{\tilde{\x}-\x^{\star}}-2\alpha\sum_{k=1}^{m}\left(\mathbb{E}\left[f(\x_{k})\right]-f(\x^{\star})\right)-2\alpha^{2}\sum_{k=1}^{m}\TNormS{\mathbb{E}\left[\v_{k}\right]}\\
 & +4L_{\sup}\alpha^{2}\sum_{k=1}^{m}\left(\mathbb{E}\left[f(\x_{k})\right]-f(\x^{\star})\right)+4L_{\sup}\alpha^{3}\sum_{k=1}^{m}\TNormS{\mathbb{E}\left[\v_{k}\right]}+4L_{\sup}\alpha^{2}m\left(f(\tilde{\x})-f(\x^{\star})\right)\\
 & \leq\TNormS{\tilde{\x}-\x^{\star}}-2\alpha\left(1-2L_{\sup}\alpha\right)\sum_{k=1}^{m}\left(\mathbb{E}\left[f(\x_{k})\right]-f(\x^{\star})\right)+4L_{\sup}\alpha^{2}m\left(f(\tilde{\x})-f(\x^{\star})\right)\\
 & \leq\left(\frac{2}{\gamma}+4L_{\sup}\alpha^{2}m\right)\left(f(\tilde{\x}_{s-1})-f(\x^{\star})\right)-2\alpha\left(1-2L_{\sup}\alpha\right)\sum_{k=1}^{m}\left(\mathbb{E}\left[f(\x_{k})\right]-f(\x^{\star})\right)\,.
\end{align*}

The second inequality is due to the strong convexity in Eq. (\ref{eq:strong_conv_x_k}),
and the third inequality uses the assumption $2L_{\sup}\alpha<1$
such that $(4L_{\sup}\alpha^{3}-2\alpha^{2})\TNormS{\mathbb{E}\left[\v_{k}\right]}\leq0$.
The last inequality uses Assumption \ref{assu:strong-convex-appendix}
with $\x$ replaced by $\tilde{\x}$ and $\y$ replaced by $\x^{\star}$.
In addition, $f(\tilde{\x}_{s})\leq\frac{1}{m}\sum_{k=1}^{m}f(\x_{k})$
due to the convexity of $f$. Therefore, we obtain 
\[
2\alpha\left(1-2L_{\sup}\alpha\right)m\left(\mathbb{E}\left[f(\tilde{\x}_{s})\right]-f(\x^{\star})\right)\leq\left(\frac{2}{\gamma}+4L_{\sup}\alpha^{2}m\right)\left(f(\tilde{\x}_{s-1})-f(\x^{\star})\right)\,.
\]
Dividing both sides of the above inequality by $2\alpha\left(1-2L_{\sup}\alpha\right)m$
gives
\[
\mathbb{E}\left[f(\tilde{\x}_{s})\right]-f(\x^{\star})\leq\rho^{s}\left(f(\tilde{\x}_{0})-f(\x^{\star})\right)\,.
\]

\end{proof}
 
\end{document}